\documentclass[11pt]{amsart}
\usepackage{amsmath}
\usepackage{amsthm}
\usepackage{amssymb}
\usepackage{amsfonts}
\usepackage{dsfont}
\usepackage{url}
\usepackage{graphicx}
\usepackage{xypic}
\usepackage{color}
\usepackage[
        colorlinks,
        backref,
]{hyperref}
\vfuzz2pt 
\hfuzz2pt 
\newcounter{tthm}
\newtheorem{thmm}{Theorem}[tthm]
\newtheorem{thm}{Theorem}[section]
\newtheorem{lem}[thm]{Lemma}
\newtheorem{prop}[thm]{Proposition}
\newtheorem{cor}[thm]{Corollary}
\newtheorem{defn}[thm]{Definition}

\newtheorem{remark}[thm]{Remark}

\newcommand{\ideal}[1]{\mathfrak{{#1}}}

\newcommand{\bZ}{{\mathbb Z}} 
\newcommand{\bQ}{{\mathbb Q}}
\newcommand{\bR}{{\mathbb R}}

\newcommand{\bN}{{\mathbb N}}

\newcommand{\dsP}{\mathds P}
\newcommand{\dsF}{\mathds F}
\newcommand{\dsN}{\mathds N}
\newcommand{\dsQ}{\mathds Q}
\newcommand{\dsC}{\mathds C}
\newcommand{\dsD}{\mathds D}
\newcommand{\dsZ}{\mathds Z}

\newcommand{\ord}{\operatorname{ord}}


\newcommand{\chara}{\operatorname{char}}

\newcommand{\Frob}{\operatorname{Frob}}
\newcommand{\Gln}{\operatorname{GL_n}}
\newcommand{\Gal}{\operatorname{Gal}}
\newcommand{\W}{\widetilde{W}}
\newcommand{\G}{\mathcal G}



\newcommand{\ck}{\overline{k}}



\makeindex

\begin{document}
\renewcommand{\theenumi}{\roman{enumi}}
\title[Random Galois groups]{The Galois group of random elements \\of linear groups}
\author{Alexander Lubotzky, Lior Rosenzweig}
\email{alexlub@math.huji.ac.il,rosenzwe@math.huji.ac.il}
\thanks{The authors are supported by ERC, ISF and NSF} %
\begin{abstract}
Let $\dsF$ be a finitely generated field of characteristic zero and $\Gamma\leq\Gln(\dsF)$ a finitely generated subgroup. For $\gamma\in\Gamma$, let $\Gal(\dsF(\gamma)/\dsF)$ be the Galois group of the splitting field of the characteristic polynomial of $\gamma$ over $\dsF$. We show that the structure of $\Gal(\dsF(\gamma)/\dsF)$ has a typical behaviour depending on $\dsF$, and on the geometry of the Zariski closure of $\Gamma$ (but not on $\Gamma$).
\end{abstract}
\maketitle
\section{Introduction}
 Let $F$ be a field of characteristic zero, and $\Gamma$ a finitely generated subgroup of $GL_n(F)$. For an element $\gamma\in \Gln(F)$, we denote by $\chi_\gamma(T)$ the characteristic polynomial of $\gamma$, $F(\gamma)$ is the splitting field of $\chi_\gamma$ over $F$, and $\Gal(F(\gamma)/F)$ the Galois group of $F(\gamma)$ above $F$.

 The goal of the paper is to describe the structure of $\Gal(F(\gamma)/F)$ for the generic element of $\Gamma$. Our work was inspired by the work of Jouve, Kowalski, and Zywina \cite{JKZ}, where a similar problem is treated when $\Gamma$ is an arithmetic subgroup of a connected algebraic group $G$ defined over a number field $k$. The main result of \cite{JKZ} (which in turn is related to \cite{Gall,GN} and generalizes some special cases in \cite{Riv,KoBook,Jou,JKZ08}) is that for a typical element $\gamma$ of such a group $\Gamma$, $\Gal(k(\gamma)/k)$ is isomorphic to an explicitly described finite group $\Pi(G)$, depending only on $G$. If $G$ splits over $k$, $\Pi(G)$ is the Weyl group $W(G)$ of (the reductive part of ) $G$.

 Our goal is to generalize this result to general $F$ and $\Gamma$. As we will see, the situation can be quite different, especially in the cases where the Zariski closure $H=\overline\Gamma$ is not connected. In this case, there is {\it{no typical}} behaviour, but rather it decomposes into finitely many typical behaviours according to the connected components, but even this description does not give the full picture. To give the precise result, let us introduce some notations.

 Let $\Sigma$ be a finite admissible generating set (c.f. \cite{LM} or \S\ref{sec:sieving in groups}) of $\Gamma$. A random walk on $\Gamma$ is a map $w:\bN\to S$ where the $k$-step is $w_k=w(1)\cdots w(k)$ (so $w_0=e$ is the identity element). For a subset $Z\subset\Gamma$ we write $\dsP(w_k\in Z)$ for the probability of $w_k$ to be in $Z$.

 Let $H$ be the Zariski closure of $\Gamma$, and $H^o$ its connected component, so $H^o$ is a normal subgroup of finite index, say $m$, in $H$. We can now state our main theorem.
\begin{thm}\label{thm:main}
Let $\Gamma=\langle\Sigma\rangle\leq\Gln(F)$, where $F$ is a finitely generated field of characteristic zero, $\overline{\Gamma}=H$, and $m=(H:H^o)$ as above. Assume $H^o$ has no central tori (e.g. $H^o$ is semisimple). Then there exist $0<c,\beta\in\bR$, and a function $\Pi:H/H^o\to$Finite Groups, such that
$$
\dsP(\Gal(F(w_k)/F)\not\simeq \Pi(H^ow_k))\leq ce^{-\beta k}
$$
i.e. for any coset $H_i$ of $H^o$ in $H$, there exists a finite group $\Pi_i=\Pi(H_i)$ such that given $w_k$ is in the coset $H_i$, the probability that its associated Galois group is isomorphic to $\Pi_i$ is approaching $1$ at an exponential rate in $k$.
\end{thm}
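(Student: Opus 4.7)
\medskip
\noindent\textbf{Proof proposal.} The strategy is to reduce, coset by coset, to a situation close to the one in \cite{JKZ}, and to close the gap by a specialization/sieve argument over the finitely generated ring on which $\Gamma$ is defined. First, I would split the walk according to the coset of $H^{o}$ it visits: the image of $w_k$ in the finite quotient $H/H^{o}$ is itself a random walk on a finite group, and standard spectral-gap estimates for walks on finite groups yield that $\dsP(w_k\in H_i)$ converges exponentially fast to a limiting distribution. It therefore suffices to establish the claim after conditioning on a fixed coset $H_i=H^{o}g_i$.

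The candidate group $\Pi_i$ is defined as follows. Since $H^{o}$ has no central tori, the regular semisimple locus is Zariski dense in $H^{o}$ and, by translation, in every coset $H_i$. Choose a quasi-semisimple representative $g_i$ of $H_i$ normalizing some maximal torus $T\subseteq H^{o}$ (such a choice is always possible in non-connected reductive groups). Then for a regular semisimple $x\in H_i$ the splitting field of $\chi_x$ over $F$ is generated by the values of the characters of $T$ on a conjugate of $x$, and $\Gal(F(x)/F)$ embeds canonically into a finite ``twisted Weyl group'' $\Pi_i$ built from $N_{H^{o}}(T)/T$ and the action of $g_i$; this embedding target is independent of $x\in H_i$. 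Thus we always have $\Gal(F(w_k)/F)\hookrightarrow\Pi_i$, and the theorem asserts that equality holds with probability $1-O(e^{-\beta k})$.

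The main technical work is a large sieve as in \cite{JKZ,LM}. I would spread $\Gamma$ out over a finitely generated $\bZ$-subalgebra $R\subset F$; for each prime ideal $\ideal p\subset R$ in a positive-density family, reducing yields a finite group $\Gamma_{\ideal p}\leq\Gln(R/\ideal p)$. A proper inclusion $\Gal(F(w_k)/F)\subsetneq\Pi_i$ implies, via Chebotarev applied to $\chi_{w_k}$, that $\Frob_{\ideal p}$ acts on the roots of $\chi_{w_k}$ through a fixed proper union of conjugacy classes of $\Pi_i$ for every admissible $\ideal p$. This translates into the requirement that $w_k\bmod\ideal p$ avoid a ``sieve set'' $S_{\ideal p}\subset\Gamma_{\ideal p}$ of uniformly positive density. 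The sieve formalism then reduces the theorem to two inputs: (a) exponential equidistribution of $w_k\bmod\ideal p$ in $\Gamma_{\ideal p}$, uniformly over $\ideal p$ in a positive-density family; and (b) a uniform lower bound on $|S_{\ideal p}|/|\Gamma_{\ideal p}|$.

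The main obstacle is input (a), which amounts to a uniform spectral gap along congruence quotients of $\Gamma$. This I would supply via the super-strong approximation theorem of Salehi Golsefidy--Varj\'u for Zariski-dense finitely generated subgroups of semisimple groups over finitely generated fields of characteristic zero, applied after projecting $\Gamma$ onto the quotient of $H^{o}$ by its (finite) center; the absence of central tori in $H^{o}$ is exactly what allows this reduction, since a central torus would block the required spectral gap along the congruence family. Input (b) is then a more routine Chebotarev/Deligne-style estimate on the density of elements with a given Frobenius cycle type in Chevalley-type finite groups, twisted by the coset representative $g_i$. Assembling (a) and (b) through the sieve yields the conditional exponential bound $\dsP(\Gal(F(w_k)/F)\not\simeq\Pi_i\mid w_k\in H_i)\leq ce^{-\beta k}$, and summing over the finitely many cosets concludes the proof.
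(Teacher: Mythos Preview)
Your overall architecture---split by coset, bound $\Gal(F(w_k)/F)$ by a fixed target group $\Pi_i$, and run a large sieve on conjugacy-class avoidance with the spectral gap supplied by Salehi Golsefidy--Varj\'u---is the paper's. But two steps are not correct as written, and the paper handles both by different means.

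First, your embedding $\Gal(F(w_k)/F)\hookrightarrow\Pi_i$ does not hold in general, even for regular semisimple $x\in H_i$: the splitting field $F(x)$ is the splitting field of the Zariski closure $D_x=\overline{\langle x_s\rangle}$, which may be a \emph{proper} diagonalizable subgroup of the Cartan $C_x$, so $\Gal(F(x)/F)$ is only a \emph{quotient} of the subgroup $\Gal(F_{C_x}/F)\leq\Pi_i$---a subquotient, not a subgroup (this is exactly the content of Proposition~\ref{prop:Gal subqtint Pi}). Your Jordan step (``proper inclusion $\Rightarrow$ miss a conjugacy class'') therefore does not apply directly. The paper repairs this with an extra ingredient you omit: it shows (Lemma~\ref{lem:kq_ired_Cartans}, via $H_i$--$k$--quasi-irreducibility, Appendix~\ref{app:k-q Cartans}) that for regular semisimple $x$ \emph{without $H_i$-components of finite order} one has $\Gal(k(x)/k)=\Gal(k_{C_x}/k)$, and separately sieves away the exponentially small set of elements with a component of finite order (Lemma~\ref{lem:sieve4regss}). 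Only after this does Jordan's theorem become available.

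Second, your passage from the finitely generated field $F$ to the sieve is not the paper's and is not justified as written. You propose to spread $\Gamma$ over a finitely generated $\bZ$-algebra $R$ and sieve directly modulo primes of $R$, invoking SGV ``over finitely generated fields''. But Theorem~\ref{thm:SGV} is stated for $\Gamma\subset GL_n(\bZ[1/q_0])$, and the Frobenius/Chebotarev translation you rely on is a number-field statement; extending both to higher-dimensional base rings is not routine. The paper instead \emph{specializes}: for each coset $H_i$ it constructs a ring homomorphism $\varphi_i:A_i\to k_i$ onto a number field (Proposition~\ref{prop:spec construction}) such that the Zariski closure of $\tilde\varphi_i(\Gamma_i)$ is isomorphic to $\langle H^o,H_i\rangle$ and $\Pi(\widetilde{H_i},k_i)\simeq\Pi(H_i,F)$---the latter requiring Hilbert irreducibility (Lemma~\ref{lem:hilberianity}) to match the Galois parts of the two $\Pi$'s. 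Since $\Gal(k_i(\varphi_i(h))/k_i)$ is always a quotient of $\Gal(F(h)/F)$, and the number-field case (\S\ref{sec:sieving in groups}) forces the former to equal $\Pi(\widetilde{H_i},k_i)$ outside an exponentially small set, a squeeze gives the result over $F$. Your direct-sieve route might be salvageable with a stronger super-approximation input, but as stated it rests on assertions not available in the literature you cite.
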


The theorem is best possible. In \S\ref{sec:examples} we will show that the result is not necessarily true if either $F$ is not finitely generated or if $H^o$ has a central torus. As we will show there, the theorem can fail in different ways, but also in the general case one can describe how $\Gal(F(w_k)/F)$ behaves, though the description is not that enlightening.
As mentioned above, Theorem \ref{thm:main} generalizes the main theorem of \cite{JKZ} in several ways: general finitely generated field $F$, and general finitely generated group $\Gamma$. But the most interesting aspect is the fact that when $m>1$, the typical behaviour is not uniform. This can happen even for arithmetic groups. Let us illustrate this by an example:

Example: Let $\Lambda=SL_n(\bZ)$ and $\Gamma=SL_n(\bZ)\rtimes C_2$ where the cyclic group $C_2=\langle\tau\rangle$ acting on $\Lambda$ by: $\tau(A)=(^tA)^{-1}$. The group $\Gamma$ can be embedded $\rho:\Gamma\to GL_{2n}(\bZ)$ by: For $A\in SL_n(\bZ)$
$$
\rho(A)=
\begin{pmatrix}
A&0\\
0&(^tA)^{-1}
\end{pmatrix}
$$
and
$$
\rho(\tau)=
\begin{pmatrix}
0&I\\
I&0
\end{pmatrix}
$$
Now for $\gamma\in\Gamma\leq GL_{2n}(\bZ)$ we will see the following behaviour:
\begin{enumerate}
\item If $\gamma$ is in the index two subgroup $\Lambda$, then typically $\Gal(\bQ(\gamma)/\bQ)\simeq S_n$ the symmetric group of n elements (This follows from \cite{KoBook}, and in fact from \cite{Riv})
\item\label{rem:item2example} If $\gamma\in \Gamma\setminus\Lambda$ then typically $\Gal(\bQ(\gamma)/\bQ)\simeq (\bZ/2\bZ)wr_{\Omega}W_r$, where $r=\lfloor\frac n2\rfloor$, $\Omega=\{a_1,b_1,\dots,a_r,b_r\}$, and $W_r=(\bZ/2\bZ)wr S_r$, the group of signed permutations of $r$ pairs of elements, acting on $\Omega$ in the natural way.
\end{enumerate}
The fact that we get in \eqref{rem:item2example} an extension of a finite abelian
group by the Weyl group of a smaller semisimple group is not
accidental. In fact, what comes into the game is the "Weyl group of
the coset" $SL_n\cdot\tau$ (see \cite{Mohr}) which is an extension of an
abelian group by the Weyl group of the group of fixed points of
$\tau$ in $SL_n$.

Let us say a few words about the proof: By some field theoretic
argument, one can reduce the problem to the case when $F=k$ a number
field. Then $\Gamma$ is a subgroup of an arithmetic group (or more
precisely of an $S-$ arithmetic group). If $H$ is semisimple, we
will apply the recent developed sieve method for linear groups (as
in \cite{JKZ}, but we will follow \cite{LM} and in particular we
will use \cite{SGV} which gives property $\tau$ for fairly general
subgroups of arithmetic groups). As said, the main novelty of the
current paper is the treatment of the non-connected case. Here we
show that the "Weyl group of a coset" (as in \cite{Mohr}) indeed
replaces the "Weyl group of the connected component".

There is another aspect of our work which seems worth to be
mentioned here: In \cite{JKZ}, the Galois group of an element
$\gamma$ of an arithmetic group $\Gamma$ is studied over the field of definition for $\Gamma$. We study the same
problem over general fields $\dsF$. For an individual element $\gamma\in\Gamma$, the Galois group $\Gal(\dsF(\gamma)/\dsF)$ depends very much on the field $\dsF$, e.g. it is the trivial group if $\dsF$ happens to contain the eigenvalues of $\gamma$. The proof of Theorem \ref{thm:main} shows, however that for a generic $\gamma\in\Gamma$, $\dsF(\gamma)$ contains a fixed finite extension $\dsF'$ of $\dsF$. Then $\Gal(\dsF(\gamma)/\dsF')=\Gal(\dsF'(\gamma)/\dsF')$ depends only on $H=\overline\Gamma$ and neither on $\dsF$ or $\Gamma$, as long as $\dsF$ is finitely generated .The next result shows that the finite generation of $\dsF$ is crucial for generic behaviour: \setcounter{tthm}{6}
\begin{thmm}
Let $H:=SL_n$, $n\geq5$, $\Gamma:=H(\dsZ)$. Let $\Sigma$ be a finite generating set of $\Gamma$, and let $X_k$ be the corresponding random walk. Then for any pair of subgroups $\underline{\G}=(G_1,G_2)$ of the alternating group $Alt(n)$ there exists an algebraic extension $\dsF_{\G}$ of $\dsQ$, and sequences $\{n_i(\G)\},\{k_i(\G)\}$, such that
\begin{eqnarray*}
\dsP(\Gal(\dsF_{\G}(X_{n_i(\G)})/\dsF_{\G})=G_1)\geq1-\frac1{2^{i}}\\
\dsP(\Gal(\dsF_{\G}(X_{k_i(\G)})/\dsF_{\G})=G_2)\geq1-\frac1{2^{i}}
\end{eqnarray*}
\end{thmm}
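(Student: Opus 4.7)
The plan is to build $\dsF_{\G}$ as an increasing union $\dsF_{\G}=\bigcup_{s\ge1}\dsF^{(s)}$ of number fields, choosing the sequences $\{n_i\},\{k_i\}$ in parallel with the construction. Assign alternating types $c(s)\in\{1,2\}$ by $c(2i-1)=1$ and $c(2i)=2$, so that $n_i:=t_{2i-1}$ and $k_i:=t_{2i}$ for times $t_s$ defined inductively, starting from $\dsF^{(0)}:=\dsQ$.

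At the inductive step, given the number field $\dsF^{(s-1)}$, apply Theorem~\ref{thm:main} with base field $\dsF^{(s-1)}$: this is permissible because $\dsF^{(s-1)}$ is finitely generated of characteristic zero and the Zariski closure $SL_n$ of $\Gamma$ is connected split semisimple with Weyl group $S_n$, so $\Pi(SL_n)=S_n$. Hence for $t$ large enough,
\[
\dsP\!\left(\Gal(\dsF^{(s-1)}(X_t)/\dsF^{(s-1)})=S_n\right)\ge 1-2^{-s-1};
\]
pick $t_s$ satisfying this bound. On the corresponding event $\dsQ(X_{t_s})$ is $S_n$-Galois over $\dsQ$ and linearly disjoint from $\dsF^{(s-1)}$.

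Let $\mathcal{P}_s$ denote the finite set of characteristic polynomials $P=\chi_\gamma$ of the $\gamma$'s witnessing this event. By a further sieve argument, prune $\mathcal{P}_s$ to retain a sub-collection of total probability at least $1-2^{-s}$ whose splitting fields $L_P\subset\overline{\dsQ}$ are jointly linearly disjoint over $\dsF^{(s-1)}$ (i.e.\ $\Gal(\prod_{P\in\mathcal{P}_s} L_P/\dsF^{(s-1)})=S_n^{|\mathcal{P}_s|}$), and set
\[
\dsF^{(s)}:=\dsF^{(s-1)}\cdot\prod_{P\in\mathcal{P}_s}L_P^{G_{c(s)}}.
\]
Then for $\gamma$ with $\chi_\gamma=P\in\mathcal{P}_s$, joint linear disjointness yields $\dsF^{(s)}\cap L_P=L_P^{G_{c(s)}}$, hence $\Gal(\dsF^{(s)}(\gamma)/\dsF^{(s)})=G_{c(s)}$, giving the required probability bound at stage $s$. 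For preservation at future stages, choose $t_{s'}$ at stage $s'>s$ large enough that $\dsQ(X_{t_{s'}})$ is linearly disjoint from the Galois closure of $\dsF^{(s'-1)}$ (a finite extension of $\dsQ$ containing all previously introduced $L_P$'s); later additions of $L_{P'}^{G_{c(s')}}$ then cannot enlarge $L_P\cap\dsF_{\G}$ beyond $L_P^{G_{c(s)}}$.

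The main obstacle is the joint-linear-disjointness pruning: upgrading the single-field sieve bound (``$\dsQ(X_t)$ is typically linearly disjoint from a fixed finite $K$'') to a simultaneous statement over the many characteristic polynomials in the support of $X_{t_s}$, retaining a high-probability sub-collection whose splitting fields give mutually independent Galois groups. This should follow by iterating the expander/sieve bounds behind Theorem~\ref{thm:main} (cf.\ \cite{KoBook,SGV,JKZ}) together with a Goursat-style analysis of subdirect products in $S_n^k$, exploiting the almost-simplicity of $S_n$ for $n\ge5$; the hypothesis $G_1,G_2\le\mathrm{Alt}(n)$ enters precisely here to control the normal-subgroup structure of the subfields $L_P^{G_{c(s)}}$ and their compositum.
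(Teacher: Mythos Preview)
Your strategy differs from the paper's in an essential way, and the ``main obstacle'' you flag is a genuine gap you have not closed.

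\textbf{What the paper does instead.} The paper does \emph{not} build $\dsF_{\G}$ as a tower of number fields obtained by adjoining fixed subfields $L_P^{G_{c(s)}}$. Rather, it first passes to the maximal pro-$2$ extension $\dsQ_{(2)}$ of $\dsQ$. For $\gamma$ with $\Gal(\dsQ(\gamma)/\dsQ)=S_n$ one has $\Gal(\dsQ_{(2)}(\gamma)/\dsQ_{(2)})=A_n$, and since $A_n$ is \emph{simple} for $n\ge5$, any two \emph{distinct} $A_n$-extensions of $\dsQ_{(2)}$ are automatically linearly disjoint. This is exactly the step that eliminates your pruning problem: no sieve or Goursat argument is needed. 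The inductive choice of times $n_i,k_i$ then only has to guarantee that the new splitting fields are disjoint from the \emph{composite} of all previous ones (a single finitely-generated field), which is exactly what one application of Theorem~\ref{thm:main} provides. Finally, $\dsF_{\G}$ is taken to be the fixed field of $r$ explicit elements $\sigma_1^{\G},\dots,\sigma_r^{\G}\in\Gal(\overline{\dsQ}/\dsQ_{(2)})$, where $r$ is a common number of generators for $G_1,G_2$ and each $\sigma_d^{\G}$ is prescribed to act as the $d$-th generator of $G_1$ (resp.\ $G_2$) on every $\dsQ_{(2)}F_{i,j}$ (resp.\ $\dsQ_{(2)}E_{i,j}$). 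Linear disjointness over $\dsQ_{(2)}$ makes this patching well defined. The hypothesis $G_1,G_2\le Alt(n)$ enters precisely here: after base change to $\dsQ_{(2)}$ the ambient Galois group is $A_n$, so only subgroups of $A_n$ can be realized.

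\textbf{Why your pruning step is problematic.} Working over $\dsF^{(s-1)}$ with $S_n$-extensions, the quotient $S_n\twoheadrightarrow\dsZ/2\dsZ$ is a genuine obstruction to joint linear disjointness: many distinct $S_n$-splitting fields at the \emph{same} time $t_s$ can share their quadratic (discriminant) subfield, and Theorem~\ref{thm:main} says nothing about this, since it is a statement about a single fixed base field, not about mutual relations among the finitely many splitting fields in the support of $X_{t_s}$. Your suggested ``iterated sieve plus Goursat'' fix is not a proof: you cannot re-run the walk once $t_s$ is fixed, and a Goursat analysis in $S_n^{|\mathcal P_s|}$ still has to rule out the $(\dsZ/2\dsZ)$-diagonal identifications, which it cannot without further input. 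The paper's passage to $\dsQ_{(2)}$ kills exactly this $2$-quotient and is the missing idea in your argument.
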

\mbox{}\\
The paper is organized as follows: We will start in Section
\ref{sec:Cartans}, by defining the groups $\Pi_1,\dots,\Pi_m$
associated with the various cosets $H_1,\dots,H_m$. We further show
that when $\gamma\in H_i$, there is a map from
$\Gal(F(\gamma)/F)$ to $\Pi_i=\Pi(H_i)$. In Section
\ref{sec:reduction mod primes} we assume $\dsF$ is a finite field,
and prove some properties of Cartan subgroups and Weyl group, that
lay the background for \S\ref{sec:sieving in groups}, where we assume
that $F=k$ is a number field and $H^o$ is semisimple and prove
Theorem \ref{thm:main} in this case using the sieve method (leaving
the reduction of the general case to \S\ref{sec:general
case}). In Section \ref{sec:general case}, we will prove the
reduction from general groups and general fields to number fields.
In Section \ref{sec:examples}, we will give various examples
illustrating the various possibilities of $\Pi(H_i)$. In the last
two subsections we will explain how Theorem \ref{thm:main} can fail
if either $F$ is not finitely generated or if $H^o$ has a central
tori. We also include two appendices developing further over
\cite{Mohr,PR} some results we need on "Weyl groups of cosets" and
on $k$-quasi-irreducibility.
\subsection{Notations} Throughout the paper we will use the following notations. For a set $X$ ,we denote by $|X|$ the cardinality of the set. We write $A=O(B)$ or $A\ll B$ if there exists an absolute constant $C\geq0$ such that $A\leq CB$.  For a group $G$, we denote by $G^\sharp$ the set of conjugacy classes, and for an element $g\in G$, $[g]\in G^\sharp$ is the conjugacy class containing it. For an element $g\in G$ and a subgroup $H<G$, denote $Z_H(g)$, the centralizer of $g$ inside $H$, that is the subgroup of elements in $H$ commuting with $g$. We use $\dsF$ for an ambient field, $\overline\dsF$ its algebraic closure and $k$ for a number field. For an algebraic group $G$, $G^o$ denotes the connected component of $G$.
\subsection{Acknowledgments} We would like to thank A. Rapinchuk, M. Larsen, M. Jarden and L. Bary-Soroker for their helpful comments and discussions. We also would like to thank E. Kowalski and F. Jouve for discussions about their work.
\section{Splitting fields of elements in algebraic groups}\label{sec:Cartans}
Let $\dsF$ be a perfect field, $\Sigma\subset \Gln(\dsF)$ a finite set. Denote by $\Gamma:=\langle\Sigma\rangle$, and $H:=\overline\Gamma$ its Zariski closure. In the following section we will construct for any coset $H_i$ of $H^o$ in $H$ a finite group $\Pi(H_i)$, such that for any element $h\in H_i\cap \Gamma$ the Galois group of the splitting field of its characteristic polynomial, denoted by $\Gal(\dsF(h)/\dsF)$, is a quotient of a subgroup of $\Pi(H_i)$. To do so we recall some properties of diagonalizable groups over perfect fields.
\subsection{Diagonalizable groups}\label{sec:diag_gps}
A linear algebraic group $D$ defined over a perfect field $\dsF$ is called {\it{diagonalizable}} if there exists a faithful rational representation $\rho:D\to GL_n$ such that $\rho(D(\overline \dsF))$ is contained in the group of diagonal matrices. For such a group $D$, denote by $X(D)$ the group of characters $\chi:D\to G_m$, where $G_m$ is the one dimensional torus. The group $\Gal(\overline \dsF/\dsF)$ acts on $D$, and on $X(D)$ by $\chi^\sigma(d)=\sigma(\chi(\sigma^{-1}(d)))$, for $\sigma\in \Gal(\overline\dsF/\dsF),\chi\in X(D),d\in D$. Let $\phi_D:\Gal(\overline \dsF/\dsF)\to Aut(X(D))$ be the homomorphism such that $\phi_D(\sigma)(\chi)=\chi^\sigma$, and denote by $\dsF_D$ the fixed field of $\ker(\phi_D)$. We call this field the {\it{splitting field}} of $D$. It is a finite Galois extension of $\dsF$. The group $D$ is said to be {\it{split}} over $\dsF$ is its splitting field is $\dsF$. In fact the following conditions are equivalent
\begin{enumerate}
    \item The action of $\Gal(\overline \dsF/\dsF)$ on $X(D)$ is trivial.
    \item There exists a faithful $\dsF$-rational representation $\rho:D(\overline \dsF)\to \dsD_n(\overline \dsF)$ where $\dsD<\Gln$ is the group of diagonal matrices.
\end{enumerate}
A connected diagonalizable group is called a {\it{torus}}. In fact every diagonalizable group $D$ is of the form $D^o\times F$, where $D^o$ is a torus, and $F$ is a finite group.
\subsection{Cartan subgroups}\label{sec:Cartan_subgp}
In the theory of connected algebraic groups, the notion of a maximal torus plays a central role. But, for non-connected groups this notion is not enough. We therefore recall the following definition of Cartan subgroup (cf. \cite{Mohr}). Let $H$ be a linear algebraic group.
\begin{defn}
A Zariski closed subgroup $C<H$ is called a {\underline{\it{Cartan subgroup}}} if the following conditions hold
\begin{enumerate}
    \item $C$ is diagonalizable
    \item $C/C^o$ is cyclic.
    \item $C$ has finite index in $N_H(C)$.
\end{enumerate}
The group $W(C)=N_{H^o}(C)/C^o$ is called the outer Weyl group of $C$.
\end{defn}
Let $H_i$ be a fixed coset of $H^o$, and denote by
\begin{multline}\label{defn:Cartan_associ}
\mathcal C_i:=\left\{C<H:
C\text{ is a Cartan subgroup s.t. }C/C^o\right. \\\left.\text{is generated by }C^og\text{ with }g\in H_i\right\}
\end{multline}
We call the elements of $\mathcal C_i$ the {\it{Cartan subgroups associated to}} $H_i$.
In \cite{Mohr} some properties of Cartan subgroup are proved. We list in the proposition below some of them that are needed in this paper
\begin{prop}\label{prop:Cartan_bsc_prpts}
Let $H$ be a linear algebraic group such that $H^o$ is reductive, and let $H_i$ be a coset of $H^o$ in $H$.
\begin{enumerate}
\item
Every semisimple element $g\in H_i$ is contained in a Cartan subgroup associated to $H_i$. In fact if $T$ is a maximal torus in $\left(Z_{H^o}(g)\right)^o$, then the group $\langle T,g\rangle$ is a Cartan subgroup.
\item
If $C\in\mathcal C_i$ is a Cartan subgroup associated to $H_i$, such that $C/C^o$ is generated by $C^og$, with $g\in H_i$, then $C^o$ is a maximal torus of $\left(Z_{H^o}(g)\right)^o$.
\item Any two Cartan subgroups $C_1,C_2\in\mathcal C_i$, are conjugate by an element of $H^o$.
    \item Let $C$ be a Cartan subgroup and $h_1,h_2\in H_i\cap C$. Then $h_1$ and $h_2$ are $H^o$ conjugate if and only if they are $N_{H^o}(C)$ conjugate.
    \item If $H^o$ is semisimple and simply connected, then $Z_{H^o}(C)=C^o$, and hence, the outer Weyl group $W(H_i,C)=N_{H^o}(C)/C^o$ acts faithfully on $C$. In general $C^o$ is of finite index inside $Z_{H^o}(C)$.
\end{enumerate}
\end{prop}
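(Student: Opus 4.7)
Most of these statements appear in Mohrdieck \cite{Mohr}, so the plan is to outline the structural ideas behind each part rather than reproduce full arguments. The unifying thread I will exploit is the correspondence between Cartan subgroups associated to $H_i$ and pairs $(g,T)$ with $g \in H_i$ semisimple and $T$ a maximal torus of $(Z_{H^o}(g))^o$. Recall that since $H^o$ is reductive and $g$ is semisimple, $(Z_{H^o}(g))^o$ is a connected reductive group and so has maximal tori with all the usual properties.

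For (1), given such $g$ and $T$, I will check that $C := \langle T, g\rangle$ is abelian and consists of commuting semisimple elements (hence diagonalizable), has $C^o = T$ (the Zariski closure of $\langle g\rangle$ has connected part a torus in $(Z_{H^o}(g))^o$, absorbed into $T$ by maximality), and finite cyclic quotient $C/T$ generated by $gT$. The technically most delicate point will be $[N_H(C):C] < \infty$: my plan is to reduce to $N_H(C)^o \subseteq C$ using $N_H(C)^o \subseteq Z_{H^o}(T)$ together with the trivial action of a connected group on the finite set $C/T$, which is carried out in \cite{Mohr}. For (2), I will suppose $C^o$ is strictly contained in some maximal torus $T'$ of $(Z_{H^o}(g))^o$; then (1) produces a Cartan $\langle T', g\rangle$ strictly containing and normalizing $C$, contradicting the finite-index condition on $N_H(C)$.

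For (3) and (4), my plan is to reduce to conjugacy of maximal tori in connected reductive groups. Given $C_1, C_2 \in \mathcal C_i$ with generators $g_j \in H_i$ of $C_j/C_j^o$, I would first use the theory of semisimple conjugacy within the coset $H_i$ (as developed in \cite{Mohr}) to $H^o$-conjugate $g_1$ to a suitable element of $C_2$, and then conjugate the resulting maximal torus $C_1^o$ to $C_2^o$ inside $(Z_{H^o}(g_2))^o$. Statement (4) is then essentially formal: for $h_2 = xh_1x^{-1}$ with $x \in H^o$, both $C$ and $xCx^{-1}$ are Cartans containing $h_2$, so (3) applied inside $(Z_{H^o}(h_2))^o$ supplies $z$ with $zC^oz^{-1} = xC^ox^{-1}$, so that $x^{-1}z \in N_{H^o}(C)$ conjugates $h_1$ to $h_2$.

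For (5), the key input is Steinberg's theorem: in a simply-connected semisimple group, centralizers of semisimple elements are connected. I will then observe $Z_{H^o}(C) = Z_{Z_{H^o}(g)}(C^o)$, and since $C^o$ is by (2) a maximal torus of the connected reductive group $Z_{H^o}(g)$, its centralizer there equals $C^o$ itself, giving $Z_{H^o}(C) = C^o$ and faithfulness of the $W(H_i, C)$-action on $C$ (the kernel lying in $Z_{H^o}(C)/C^o$). In the general reductive case, the same argument inside $(Z_{H^o}(g))^o$ yields $Z_{H^o}(C) \cap (Z_{H^o}(g))^o = C^o$; combined with $[Z_{H^o}(g):(Z_{H^o}(g))^o] < \infty$, this produces the finite-index statement. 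The main obstacle I expect is part (3): non-connectedness of $H$ prevents a direct application of Chevalley-style conjugacy theorems, and one must invoke Mohrdieck's framework of conjugacy within a fixed coset; the finite-index condition in (1) is the other technically subtle piece, while the remaining parts follow formally once the $(g,T)$-correspondence and Steinberg's theorem are in place.
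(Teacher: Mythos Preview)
Your proposal is correct and matches the paper's approach: the paper cites Mohrdieck for all five parts and supplies detailed proofs of (1)--(3) in its Appendix (Proposition~\ref{prop:Cartan_basic_props}, Proposition~\ref{prop:Cartan subgps pre conj props}, and Corollary~\ref{cor:Cartan_sbgps_conjugated}). In particular your outline for (3) --- first move one generator into a $C^o$-coset of the other Cartan via coset-level conjugacy, then conjugate maximal tori inside the relevant centralizer --- is exactly the argument of Corollary~\ref{cor:Cartan_sbgps_conjugated}. Your treatment of (5) via Steinberg's connectedness theorem and the identity $Z_{H^o}(C)=Z_{Z_{H^o}(g)}(C^o)$ is correct and standard; the paper does not spell this part out.

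One point to tighten in your sketch of (4). To know that $C^o$ and $xC^ox^{-1}$ are \emph{maximal} tori of $(Z_{H^o}(h_2))^o$ (so that they are conjugate there), and then that the resulting element normalizes all of $C$ rather than just $C^o$, you are implicitly using that $h_1$ and $h_2$ generate $C/C^o$: this is what makes part (2) applicable and gives $\langle C^o,h_j\rangle=C$. That holds whenever $C\cap H^o=C^o$ (equivalently, the natural map $C/C^o\to H/H^o$ is injective), which is the intended situation, but you should make the hypothesis explicit. Also, the element you want is $z^{-1}x$ rather than $x^{-1}z$: one has $(z^{-1}x)\,h_1\,(z^{-1}x)^{-1}=z^{-1}h_2z=h_2$ since $z\in (Z_{H^o}(h_2))^o$, and $(z^{-1}x)C^o(z^{-1}x)^{-1}=z^{-1}(xC^ox^{-1})z=C^o$.
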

\subsection{$\dsF$-Cartan subgroups}\label{sec:F-Cartans}
Since in this paper the base field plays an important role, the notion of $\dsF$-Cartan subgroup is in order. As above, let $H$ be a linear algebraic group, defined over a field $\dsF$ (in general not algebraically closed), and let $H_i$ be a coset of the connected component $H^o$. Denote by
\begin{multline*}
\mathcal C_i(\dsF)=\left\{C<H:C \text{ is a Cartan subgroup defined over }\dsF\right.\\
\left. \text{ such that }C/C^o \text{ is generated by } C^og,\; g\in H_i(\dsF)\right\}
\end{multline*}
We say that the coset $H_i$ {\it{splits over }}$\dsF$ if there exists an $\dsF$-Cartan subgroup associated to $H_i$, (i.e. a member of $\mathcal C_i(\dsF)$) that splits over $\dsF$ (note that this notion agrees with the case of connected groups, where a group is said to split over $\dsF$ if it contains an $\dsF$- split maximal torus).
Recall that for a Cartan subgroup $C\in\mathcal C_i(\dsF)$ the Galois group $\Gal(\overline\dsF/\dsF)$ acts on $X(C)$. Denote by $\dsF_C$ the splitting field of $C$. Let $C\in\mathcal C_i(\dsF)$ be a fixed $\dsF$-Cartan subgroup. The outer Weyl group $W(H_i,C)$ acts on $C$, and therefore also on $X(C)$, and can be mapped into $Aut(X(C))$. We denote its image by $\widetilde W(H_i,C)$, and denote this action by $w\cdot\chi=\chi(w(c))$ for $\chi\in X(C)$, and $c\in C$. Since $C$ is defined over $\dsF$, then its normalizer and centralizer in $H^o$, $N_{H^o}(C),Z_{H^o}(C)$, are also defined over $\dsF$, and hence the Galois group $\Gal(\overline{\dsF}/\dsF)$ acts on the outer Weyl group $W(H_i,C)$, and on $\widetilde W(H_i,C)$. Let $\sigma\in \Gal(\overline{\dsF}/\dsF)$, $w\in \W(H_i,C)$ and $\chi\in X(C)$. Then
\begin{equation}\label{eq:Gal_acts_Weyl}
\sigma(w)\cdot\chi=\left(w\cdot\chi^{\sigma^{-1}}\right)^\sigma.
\end{equation}
In particular, if $C$ splits over $\dsF$, and so the action of the Galois group is trivial on $X(C)$, it is also trivial on $\widetilde W(H_i,C)$.

We can now define the group $\Pi(H_i)$ that will contain as a subquotient the Galois group of the splitting field of the characteristic polynomial of every element in $H_i$. Let $C\in\mathcal C_i(\dsF)$ be an $\dsF$-Cartan subgroup of $H$ associated to $H_i$. Denote by $\Pi(H_i,C,\dsF)$ the subgroup of $Aut(X(C))$ generated by the image of $\widetilde W(H_i,C)$ and $\phi_C(\Gal(\dsF_C/\dsF))$. Let $C_1$ be another $\dsF$-Cartan subgroup of $H$ associated to $H_i$. Then $C_1$ and $C$ are $H^o$-conjugate, that is there exists an element $h\in H^o$ such that $C=h^{-1}C_1h$. Notice that since both Cartan subgroups are defined over $\dsF$, we have that for any $\sigma\in \Gal(\overline{\dsF}/\dsF)$ the element $h^{-1}\sigma(h)$ normalizes $C$, and therefore it defines an element of $W(H_i,C)$, and therefore an element of $\W(H_i,C^o)$. Denote this element by $w_\sigma$. Denote by $f:C_1\to C$ the isomorphism sending $x\mapsto h^{-1}xh$, and by $F:X(C)\to X(C_1)$ the one sending $\chi\mapsto\chi\circ f$.
\begin{prop}\label{prop:Pi_bsc_props} Under the above notations, we have:
\begin{enumerate}
    \item $\W(H_i,C)$ is a normal subgroup of $\Pi(H_i,C,\dsF)$.
    \item\label{prop:uniqns of Pi_ii} The isomorphism of $Aut(X(C))\to Aut(X(C_1))$ sending $\gamma\mapsto F\circ\gamma\circ F^{-1}$ induces an isomorphism of $\Pi(H_i,C,\dsF)$ onto $\Pi(H_i,C_1,\dsF)$, and of $\W(H_i,C)$ onto $\W(H_i,C_1)$.
    \item Let $\sigma\in \Gal(\overline{\dsF}/\dsF)$. Denote by $w_\sigma\in \W(H_i,C)$ the element represented by $h^{-1}\sigma(h)$. Then
    $$
    F^{-1}\circ\phi_{C_1}(\sigma)\circ F=w_\sigma\circ\phi_C(\sigma)
    $$
    \item\label{prop:Pi_bsc_props_iv} Let $K\subset\overline{\dsF}$ be an extension of $\dsF$ over which $H_i$ splits. Then $\phi_C(\Gal(\overline{\dsF}/K))\subset \W(H_i,C)$.
\end{enumerate}
\end{prop}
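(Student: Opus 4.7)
I would prove the four statements in order, with the substance concentrated in (iii).

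Part (i) follows from reinterpreting \eqref{eq:Gal_acts_Weyl} inside $\operatorname{Aut}(X(C))$: writing both sides of $\sigma(w)\cdot\chi=(w\cdot\chi^{\sigma^{-1}})^\sigma$ as automorphisms of $X(C)$ gives $\widetilde{\sigma(w)}=\phi_C(\sigma)\,\tilde w\,\phi_C(\sigma)^{-1}$. Hence conjugation by each $\phi_C(\sigma)$ preserves the subset $\W(H_i,C)\subset\operatorname{Aut}(X(C))$; since $\W(H_i,C)$ is itself a subgroup, this yields normality in $\Pi(H_i,C,\dsF)=\langle\W(H_i,C),\phi_C(\Gal(\dsF_C/\dsF))\rangle$.

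For (ii), the map $f\colon C_1\to C$, $x\mapsto h^{-1}xh$, is a $\overline{\dsF}$-isomorphism of algebraic groups, so $F\colon X(C)\to X(C_1)$ is a group isomorphism and conjugation by $F$ gives an isomorphism $\operatorname{Aut}(X(C))\to\operatorname{Aut}(X(C_1))$. A direct check confirms that if $w\in W(H_i,C)$ is represented by $n\in N_{H^o}(C)$, then $hnh^{-1}\in N_{H^o}(C_1)$ represents a well-defined element $w'\in W(H_i,C_1)$ with $F\circ\tilde w\circ F^{-1}=\tilde{w'}$, so $\W(H_i,C)$ is carried onto $\W(H_i,C_1)$. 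That the full group $\Pi(H_i,C,\dsF)$ is carried onto $\Pi(H_i,C_1,\dsF)$ is then a consequence of (iii): rearranging the identity there gives $F\phi_C(\sigma)F^{-1}=(Fw_\sigma F^{-1})^{-1}\phi_{C_1}(\sigma)$, which already lies in $\Pi(H_i,C_1,\dsF)$.

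Part (iii) is the computational core. I would first verify that $u_\sigma:=h^{-1}\sigma(h)$ genuinely normalizes $C$: applying $\sigma$ to $C_1=hCh^{-1}$ and using $\sigma(C)=C$, $\sigma(C_1)=C_1$ (both are $\dsF$-defined) yields $\sigma(h)C\sigma(h)^{-1}=hCh^{-1}$, so $u_\sigma\in N_{H^o}(C)$ and $w_\sigma$ is a well-defined element of $\W(H_i,C)$. After that, the claimed identity reduces to unwinding the definitions: applied to an arbitrary $\chi\in X(C)$ and evaluated at $c\in C$, both sides produce $\sigma\bigl(\chi(h^{-1}\sigma^{-1}(h)\,\sigma^{-1}(c)\,\sigma^{-1}(h)^{-1}h)\bigr)$. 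The main bookkeeping headache is picking the sign convention for the Weyl action consistently so the conjugates match; this is where I expect the one real difficulty, but no substantive obstruction.

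Finally, for (iv), let $C'\in\mathcal C_i(K)$ be a $K$-Cartan splitting over $K$, and write $C=h^{-1}C'h$ with $h\in H^o(\overline{\dsF})$. The identity of (iii) carries over verbatim to this setup for any $\sigma\in\Gal(\overline{\dsF}/K)$: the argument used only that both Cartans are defined over the base field and invariant under $\sigma$, which holds here for $K$. Hence $F^{-1}\phi_{C'}(\sigma)F=w_\sigma\phi_C(\sigma)$, but $\phi_{C'}(\sigma)=\mathrm{id}$ because $C'$ splits over $K$, so $\phi_C(\sigma)=w_\sigma^{-1}\in\W(H_i,C)$, giving the desired inclusion.
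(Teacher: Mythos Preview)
Your proposal is correct and follows essentially the same route as the paper: part (i) via \eqref{eq:Gal_acts_Weyl}, parts (ii) and (iii) via the direct computation $(\chi\circ f)^\sigma\circ f^{-1}=\chi^\sigma\circ(f\circ(f^\sigma)^{-1})^{-1}$ identifying $f\circ(f^\sigma)^{-1}$ with conjugation by $h^{-1}\sigma(h)$, and part (iv) by choosing a split Cartan over $K$ and invoking (iii) with $K$ as base field. The only differences are cosmetic: you include the check that $h^{-1}\sigma(h)\in N_{H^o}(C)$ inside the proof (the paper states it just before the proposition), and you are explicit that (iii) is being reapplied over $K$ rather than $\dsF$ in (iv), which the paper leaves implicit.
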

\begin{proof}
For $\sigma\in \Gal(\overline{\dsF}/\dsF)$ and $w\in \W(H_i,C)$ we need to show that $\phi_C(\sigma)\circ w\circ\phi_C(\sigma^{-1})$ lies in $\W(H_i,C)$. Now,
\begin{equation}
\left(\phi_C(\sigma)\circ w\circ\phi_C(\sigma^{-1})\right)(\chi)=\left(w\cdot\chi^{\sigma^{-1}}\right)^\sigma=\sigma(w)\cdot\chi
\end{equation}
We therefore get that $\phi_C(\sigma)\circ w\circ\phi_C(\sigma^{-1})=\sigma(w)\in \W(H_i,C)$.
For $\sigma\in \Gal(\overline{\dsF}/\dsF)$ we will show that $F^{-1}\circ\phi_{C_1}(\sigma)\circ F$ lies in $\Pi(H_i,C,\dsF)$.
\begin{equation}
\left(F^{-1}\circ\phi_{C_1}(\sigma)\circ F\right)(\chi)=(\chi\circ f)^\sigma\circ f^{-1}=\chi^\sigma\circ(f^\sigma\circ f^{-1})=\chi^\sigma\circ(f\circ (f^\sigma)^{-1})^{-1}
\label{eq:pi(Hi,C) isomorphism}
\end{equation}
where the isomorphism $f\circ (f^\sigma)^{-1}$ of $C$ maps $x\mapsto h^{-1}\sigma(h)x(h^{-1}\sigma(h))^{-1}$, which equals the isomorphism of $C$ given by $w_\sigma$. This proves that
\begin{equation}\label{eq:galois lmnts in Pi}
    F^{-1}\circ\phi_{C_1}(\sigma)\circ F=w_\sigma\circ\phi_C(\sigma)
\end{equation}
and is therefore an element of $\Pi(H_i,C,\dsF)$, and we also proved (3). To finalize the proof of part (2) we need to see the isomorphism of the Weyl groups: we note that if $w\in W(H_i,C_1)$ is represented by $n\in N_{H^o}(C)$ then $h^{-1}nh\in N_{H^o}(C_1)$. The isomorphism of $\W(H_i,C),\W(H_i,C_1)$ then follows immediately.

For the last part, if $H_i$ splits over $K$, we may then take $C$ as a split Cartan group. We therefore get by \eqref{eq:galois lmnts in Pi} that
$$
    \phi_{C_1}(\sigma)=F\circ w_\sigma\circ F^{-1}
$$
which is an element of $\W(H_i,C_1)$ by part \eqref{prop:uniqns of Pi_ii} of the proposition.
\end{proof}
For a fixed field $\dsF$, the group $\Pi(H_i,C,\dsF)$ is unique up to isomorphism, and the isomorphisms $\Pi(H_i,C,\dsF)\widetilde{\to}\Pi(H_i,C',\dsF)$ are unique up to inner automorphisms of $H^o$. We can therefore denote it by $\Pi(H_i,\dsF)$, and the set of conjugacy classes $\Pi(H_i,C,\dsF)^\sharp$ are unambiguous and we will denote it by $\Pi(H_i,\dsF)^\sharp$. Notice that by the last part of Proposition \ref{prop:Pi_bsc_props} we have that if $H_i$ splits over $\dsF$, then $\Pi(H_i,C,\dsF)=\W(H_i,C)$.
 In the case that two or more fields of different types take place we denote the set of conjugacy classes by $\W(H_{i,\overline\dsF})^\sharp$.

 For a coset $H_i$, define the {\it{splitting field of}} $H_i$ to be $\dsF_i:=\cap_{C\in\mathcal C_i(\dsF)}\dsF_C$, and $\dsF_i^W=\phi_C^{-1}(\W(H_i,C))$ for an $\dsF$-Cartan subgroup $C$ associated to $H_i$ (notice that the definition of $\dsF_i^W$ is independent of $C$ by Proposition \ref{prop:Pi_bsc_props}\eqref{prop:uniqns of Pi_ii}). Although $H_i$ does not necessarily split over $\dsF_i$, the following lemma shows that some connection with the splitting property still holds.
\begin{lem}\label{lem:splitng fld}
Let $\dsF$, $H$, and $H_i$ be as above. Then
\begin{enumerate}
\item\label{lem:splitng fld1}
For any $\dsF$-Cartan subgroup $C$ associated to $H_i$,
$\phi_C(\Gal(\overline{\dsF}/\dsF_i))\subseteq \W(H_i,C)$. i.e. $\dsF_i^W\subset\dsF_i$, and in particular $\Gal(\overline{\dsF}/\dsF_i)\leq\Gal(\overline{\dsF}/\dsF_i^W)$
\item
For any $\dsF$-Cartan subgroup $C$ associated to $H_i$, the following holds:
\begin{enumerate}
\item
$\Gal(\dsF_C/\dsF_i^W)$ is isomorphic to a subgroup of $\W(H_i,C)$.
\item
There is an exact sequence:
\begin{equation}\label{eq:Pi image seq}
1\to \widetilde W(H_i,C)\cap \phi_C(\Gal(\overline{\dsF}/\dsF))\to\phi_C(\Gal(\overline{\dsF}/\dsF))\to \Gal(\dsF_i^W/\dsF)\to1.
\end{equation}
Furthermore, the group $\Pi(H_i,C,\dsF)$ satisfies the following exact sequence
\begin{equation}
1\to \widetilde W(H_i,C)\to\Pi(H_i,C,\dsF)\to \Gal(\dsF_i^W/\dsF)\to1
\label{eq:Pi exct seq}
\end{equation}
\end{enumerate}
\item{\bf{Functoriality of $\Pi$}}:
For any finite field extension $E/\dsF$, $\Pi(H_i,C,E)\leq\Pi(H_i,C,\dsF)$.
\item\label{prop:functorPi}
Let $C$ be an $\dsF$-Cartan subgroup associated to $H_i$. Then if there exists a finite extension $E$ such that
 $\phi_C:\Gal(\overline{\dsF}/E)\to\Pi(H_i,C,E)$ is surjective, then $\phi_C:\Gal(\overline{\dsF}/\dsF)\to\Pi(H_i,C,\dsF)$ is also surjective
\end{enumerate}
\end{lem}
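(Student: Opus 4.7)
My plan is to derive all four parts from the two foundational facts of Proposition~\ref{prop:Pi_bsc_props}: the transformation formula \eqref{eq:galois lmnts in Pi} relating $\phi_C$ and $\phi_{C_1}$, and the normality of $\W(H_i,C)$ in $\Pi(H_i,C,\dsF)$.

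For Part~(1) I would fix $C\in\mathcal{C}_i(\dsF)$ and let $C_1$ range over $\mathcal{C}_i(\dsF)$. For $\sigma\in\Gal(\overline{\dsF}/\dsF_{C_1})$ one has $\phi_{C_1}(\sigma)=1$, and \eqref{eq:galois lmnts in Pi} then forces $\phi_C(\sigma)=w_\sigma^{-1}\in \W(H_i,C)$. Hence $\phi_C\bigl(\Gal(\overline{\dsF}/\dsF_{C_1})\bigr)\subseteq \W(H_i,C)$ for every $C_1\in\mathcal{C}_i(\dsF)$. Since $\dsF_i=\bigcap_{C_1}\dsF_{C_1}$, Galois theory identifies $\Gal(\overline{\dsF}/\dsF_i)$ with the closed subgroup of $\Gal(\overline{\dsF}/\dsF)$ generated by $\bigcup_{C_1}\Gal(\overline{\dsF}/\dsF_{C_1})$, and because $\W(H_i,C)$ is itself a subgroup, the image of this generated group remains inside $\W(H_i,C)$. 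Unwinding the definition of $\dsF_i^W$ yields $\dsF_i^W\subseteq \dsF_i$.

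For Part~(2a) I would note that $\ker\phi_C=\Gal(\overline{\dsF}/\dsF_C)\subseteq \phi_C^{-1}(\W(H_i,C))=\Gal(\overline{\dsF}/\dsF_i^W)$, so $\dsF_i^W\subseteq \dsF_C$ and $\phi_C$ descends to an injection $\Gal(\dsF_C/\dsF_i^W)\hookrightarrow \W(H_i,C)$. For Part~(2b) I would first check that $\phi_C^{-1}(\W(H_i,C))$ is normal in $\Gal(\overline{\dsF}/\dsF)$: the identity $\phi_C(\sigma)\circ w\circ \phi_C(\sigma)^{-1}=\sigma(w)\in \W(H_i,C)$ already derived inside the proof of Proposition~\ref{prop:Pi_bsc_props}(i) says $\phi_C(\sigma)$ normalises $\W(H_i,C)$ for every $\sigma$, which yields the required normality and makes $\dsF_i^W/\dsF$ Galois. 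Writing $G:=\phi_C(\Gal(\overline{\dsF}/\dsF))$, the first exact sequence then follows by composing $\phi_C$ with the quotient $G\twoheadrightarrow G/(G\cap \W(H_i,C))$ and applying the first isomorphism theorem to identify the target with $\Gal(\dsF_i^W/\dsF)$. For the second sequence I would verify that $\Pi(H_i,C,\dsF)=\W(H_i,C)\cdot G$ (a product of a normal subgroup and a subgroup, hence itself a group, and visibly containing both generators of $\Pi$), so $\Pi/\W(H_i,C)\simeq G/(G\cap \W(H_i,C))\simeq \Gal(\dsF_i^W/\dsF)$.

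For Part~(3), functoriality is immediate: $\W(H_i,C)$ is a purely geometric object (field-independent), an $\dsF$-Cartan is automatically an $E$-Cartan, and $\phi_C^{E}$ is simply the restriction of $\phi_C^{\dsF}$ to $\Gal(\overline{\dsF}/E)\leq \Gal(\overline{\dsF}/\dsF)$, so both generators of $\Pi(H_i,C,E)$ already lie inside $\Pi(H_i,C,\dsF)$. Part~(4) is a one-line consequence of this: if $\phi_C$ surjects onto $\Pi(H_i,C,E)$ then $\W(H_i,C)\subseteq \phi_C(\Gal(\overline{\dsF}/E))\subseteq \phi_C(\Gal(\overline{\dsF}/\dsF))$, so $\phi_C(\Gal(\overline{\dsF}/\dsF))$ already contains both generators of $\Pi(H_i,C,\dsF)$ and must equal it. The only point requiring genuine care will be Part~(1), where it is essential that the images lie in a \emph{subgroup} (not merely a coset), so that the passage from each individual $\Gal(\overline{\dsF}/\dsF_{C_1})$ to the generated group $\Gal(\overline{\dsF}/\dsF_i)$ is legitimate; beyond that point the remaining parts are formal bookkeeping.
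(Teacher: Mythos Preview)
Your proposal is correct and follows essentially the same route as the paper. The only cosmetic difference is in Part~(1): the paper invokes Proposition~\ref{prop:Pi_bsc_props}\eqref{prop:Pi_bsc_props_iv} (noting that $H_i$ splits over each $\dsF_{C_1}$, so $\dsF_i^W\subseteq\dsF_{C_1}$ for all $C_1$, hence $\dsF_i^W\subseteq\bigcap_{C_1}\dsF_{C_1}=\dsF_i$), whereas you unpack that very proposition by applying the transformation formula \eqref{eq:galois lmnts in Pi} directly---this is literally the content of the proof of \eqref{prop:Pi_bsc_props_iv}, so the two arguments coincide. Parts~(2)--(4) match the paper's proof line for line.
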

\begin{remark}
\begin{enumerate}
\item
We do not know if $\dsF_i^W=\dsF_i$. In \cite{PR2}, Prasad and Rapinchuk show that if $H$ is connected, then $\dsF_i^W$ is the smallest field $L$ such that $H$ is an inner form over $L$.
\item
Part \eqref{prop:functorPi} will allow us, in the proof of Theorem \ref{thm:main}, to assume that $H_i$ splits over $\dsF$
\end{enumerate}
\end{remark}
\begin{proof}\mbox{}
\begin{enumerate}
\item
Since $\W(H_i,C)$ is a normal subgroup of $\Pi(H_i,C,\dsF)$, then so is its inverse image $W_\phi:=\phi^{-1}(\W(H_i,C))$. Furthermore, as mentioned above, by Proposition \ref{prop:Pi_bsc_props}\eqref{prop:uniqns of Pi_ii}, it is independent of $C$. $\dsF_i\subset\overline{\dsF}$ is the fixed field of $W_\phi$, which is the finite extension of $\dsF$ satisfying that it is the minimal extension of $\dsF$ such that $\phi_C(\Gal(\overline{\dsF}/L))\subset \W(H_i,C)$. For a Cartan subgroup $C_1$, by Proposition \ref{prop:Pi_bsc_props}\eqref{prop:Pi_bsc_props_iv} we have that $\dsF_i^W\subset \dsF_{C_1}$. Since $C_1$ is arbitrary, we get that $\dsF_i^W\subset\dsF_i$.
\item
\begin{enumerate}
\item
As $\ker(\phi_C)=\Gal(\overline{\dsF}/\dsF_C)$, we get by the first part that $\Gal(\dsF_C/\dsF_i)$ is isomorphic to a subgroup of $\W(H_i,C)$.
\item
By \eqref{lem:splitng fld1}, $\dsF_i^W\subset\dsF_C$, and therefore $\Gal(\overline{\dsF}/\dsF_C)\leq \Gal(\overline{\dsF}/\dsF_i^W)$. Also by the definition of $\dsF_i^W$, and the fact that $\ker(\phi_C)=\Gal(\overline{\dsF}/\dsF_C)$, we get that
\begin{multline}
\phi_C(\Gal(\overline{\dsF}/\dsF))/\left(\W(H_i,C)\cap\phi_C(\Gal(\overline{\dsF}/\dsF))\right)\simeq \\ \left(\Gal(\overline{\dsF}/\dsF)/\Gal(\overline{\dsF}/\dsF_C)\right)/\left(\Gal(\overline{\dsF}/\dsF_i^W)/\Gal(\overline{\dsF}/\dsF_C)\right)\simeq \Gal(\dsF_i^W/\dsF)
\end{multline}
By definition, $\Pi(H_i,C,\dsF)$ is generated by $\phi_C(\Gal(\overline{\dsF}/\dsF))$ and $\W(H_i,C)$. Since $\W(H_i,C)$ is a normal subgroup of $\Pi(H_i,\dsF,C)$, it is in fact equal to $\W(H_i,C)\phi_C(\Gal(\overline{\dsF}/\dsF))$. Therefore, by the computation above, we have that
\begin{multline}
\Pi(H_i,\dsF,C)/\W(H_i,C)=\W(H_i,C)\phi_C(\Gal(\overline{\dsF}/\dsF))/\W(H_i,C)\simeq\\
\phi_C(\Gal(\overline{\dsF}/\dsF))/\left(\W(H_i,C)\cap\phi_C(\Gal(\overline{\dsF}/\dsF))\right)\simeq \Gal(\dsF_i^W/\dsF)
\end{multline}
\end{enumerate}
\item
Since $\phi_C(\Gal(\overline{\dsF}/E))$ is a subgroup of $\phi_C(\Gal(\overline{\dsF}/\dsF))$, and by the definition of the groups, we get that $\Pi(H_i,C,\dsF)\leq\Pi(H_i,C,E).$
\item
By \eqref{eq:Pi image seq},\eqref{eq:Pi exct seq}, we get that for any field $K$, $\phi_C:\Gal(\overline{\dsF}/K)\to\Pi(H_i,C,K)$ is surjective if and only if the image contains $\W(H_i,C)$. Therefore, if there exists a field E such that $\phi_C:\Gal(\overline{\dsF}/E)\to\Pi(H_i,C,E)$ is surjective, then the image $\phi_C(\Gal(\overline{\dsF}/\dsF)$ contains $\W(H_i,C)$, and therefore, $\phi_C:\Gal(\overline{\dsF}/\dsF)\to\Pi(H_i,C,\dsF)$ is surjective.
\end{enumerate}
\end{proof}
Let $\sigma\in \Gal(\overline{\dsF}/\dsF)$. We define a function $\theta_\sigma:\mathcal C_i(\dsF)\to\Pi(H_i,\dsF)^\sharp$ given by $\theta_\sigma(C)=[\phi_C(\sigma)]$. Notice that if $H_i$ splits over $\dsF$, then the image is in fact inside $\W(H_i)^\sharp$. In section \ref{appsec:reg ss} we define the notion of regular semisimple elements. A semisimple element is called {\it{regular}} if the connected component of its centralizer in $H^o$ is a torus. From Proposition \ref{prop:Cartan_basic_props} (2) it follows that such an element $g\in H_i$ is contained in a unique Cartan subgroup associated to $H_i$ that we denote $C_g$. Note that when $g\in H_i(\dsF)$, then $C_g$ is defined over $\dsF$. Using this we can extend $\theta_\sigma$, defined above for Cartan subgroups, to the set of regular semisimple elements in $H_i(\dsF)$, which we denote $\left(H_i(\dsF)\right)_{sr}$. We do this by defining for an element $\sigma\in \Gal(\overline{\dsF})$ the map $\theta_\sigma:\left(H_i(\dsF)\right)_{sr}\to \Pi(H_i,\dsF)^\sharp $ with $\theta_\sigma(g)=\left[\phi_{C_g}(\sigma)\right]$.

The following proposition shows that in the case where $\chara(\dsF)=0$, for any element $h\in H_i(\dsF)$, the Galois group of the splitting field over $\dsF$ of the characteristic polynomial of $h$ is a subquotient of $\Pi(H_i,\dsF)$.
\begin{prop}\label{prop:Gal subqtint Pi}
Let $H,H_i,\dsF_i$ be as above, and assume $\chara(\dsF)=0$. For any $h\in H_i(\dsF)$, let $\Gal(\dsF(h)/\dsF)$ be the Galois group of the splitting field of $\det(T-h)$. Then $\Gal(\dsF(h)/\dsF)$ is isomorphic to a quotient of a subgroup of $\Pi(H_i,\dsF)$, and
 $\Gal(\dsF(h)/(\dsF(h)\cap \dsF_i))$ is isomorphic to a quotient of a subgroup of $\W(H_i)$.
\end{prop}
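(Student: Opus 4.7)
The plan is to reduce to the case where $h$ is semisimple via Jordan decomposition, then to embed $h$ into a Cartan subgroup defined over $\dsF$, and finally to show that the splitting field of $\chi_h$ is contained in $\dsF_C$, whereupon both assertions are Galois theory.

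First, since $\chara(\dsF)=0$, the field is perfect, so the Jordan decomposition $h=h_s h_u$ has $h_s,h_u\in H(\dsF)$. The unipotent element $h_u$ lies in $H^o$, which forces $h_s=h h_u^{-1}\in H_i(\dsF)$, and $\chi_h=\chi_{h_s}$ gives $\dsF(h)=\dsF(h_s)$. Hence I may replace $h$ by $h_s$ and assume $h$ is semisimple. By Proposition \ref{prop:Cartan_bsc_prpts}(1), if $T$ is any maximal torus of $(Z_{H^o}(h))^o$, then $C=\langle T,h\rangle$ is a Cartan subgroup associated to $H_i$. Choosing $T$ defined over $\dsF$ — which is possible because $(Z_{H^o}(h))^o$ is a smooth connected linear algebraic group over $\dsF$, so Grothendieck's theorem supplies an $\dsF$-rational maximal torus — produces $C\in\mathcal C_i(\dsF)$.

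The crucial geometric observation is now that $\dsF(h)\subset\dsF_C$. Indeed, $C$ is diagonalizable, and over its splitting field $\dsF_C$ the restriction to $C$ of the standard embedding $H\hookrightarrow\Gln$ decomposes into weight spaces indexed by characters $\chi_1,\dots,\chi_n\in X(C)$. Each $\chi_j$ is $\Gal(\overline\dsF/\dsF_C)$-fixed by the very definition of $\dsF_C$, hence is a morphism defined over $\dsF_C$; so for $h\in C(\dsF)\subset C(\dsF_C)$ each eigenvalue $\chi_j(h)$ lies in $\dsF_C$. The containment $\dsF(h)\subset\dsF_C$ means $\ker\phi_C=\Gal(\overline\dsF/\dsF_C)\leq\Gal(\overline\dsF/\dsF(h))$, so $\phi_C$ descends to a surjection
\[
\phi_C(\Gal(\overline\dsF/\dsF))\twoheadrightarrow \Gal(\dsF(h)/\dsF).
\]
Since $\phi_C(\Gal(\overline\dsF/\dsF))$ is by definition a subgroup of $\Pi(H_i,C,\dsF)\cong\Pi(H_i,\dsF)$, the first assertion follows.

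For the second assertion, the natural restriction gives $\Gal(\dsF(h)/(\dsF(h)\cap\dsF_i))\cong\Gal(\dsF(h)\cdot\dsF_i/\dsF_i)$. Because $\dsF_i=\cap_{C'\in\mathcal C_i(\dsF)}\dsF_{C'}\subset\dsF_C$ and $\dsF(h)\subset\dsF_C$, this is a quotient of $\Gal(\dsF_C/\dsF_i)\cong\phi_C(\Gal(\overline\dsF/\dsF_i))$, and Lemma \ref{lem:splitng fld}(\ref{lem:splitng fld1}) places the latter inside $\widetilde W(H_i,C)$, yielding the claim. The only step that is not purely formal Galois theory is the rational existence of the Cartan subgroup after reduction to the semisimple case; once $C\in\mathcal C_i(\dsF)$ is in hand, the inclusion $\dsF(h)\subset\dsF_C$ drives the entire argument.
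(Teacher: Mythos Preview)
Your proof is correct and follows essentially the same route as the paper: Jordan decomposition to reduce to the semisimple case, embedding into an $\dsF$-Cartan subgroup $C\in\mathcal C_i(\dsF)$, and then the inclusion $\dsF(h)\subset\dsF_C$ together with Lemma~\ref{lem:splitng fld}. The paper phrases the key inclusion via the intermediate diagonalizable group $D_h=\overline{\langle h_s\rangle}\subset C$ (whose splitting field is $\dsF(h)$), whereas you argue directly from the weight decomposition of the ambient representation over $\dsF_C$; these are equivalent. Your invocation of Grothendieck's theorem to secure an $\dsF$-rational maximal torus in $(Z_{H^o}(h))^o$, and your explicit identification $\Gal(\dsF(h)/(\dsF(h)\cap\dsF_i))\cong\Gal(\dsF(h)\cdot\dsF_i/\dsF_i)$, are in fact more careful than the paper's own write-up, which leaves both points implicit.
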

\begin{proof}
Let $h\in H_i(\dsF)$. By Jordan decomposition there exists a unique pair $h_s,h_u\in H(\dsF)$ with $h_s$ semisimple, and $h_u$ unipotent, such that $h=h_sh_u=h_uh_s$. Furthermore, since $h_u$ is unipotent, the group generated by it is connected and therefore $h_u\in H^o$, and so $h_s\in H_i(\dsF)$. Let $D_h:=D_{h_s}$ be the Zariski closed subgroup generated by $h_s$. Then $D_h$ is contained in any Cartan subgroup $C$ of $G$ containing $h_s$. Let $C_h$ be a Cartan subgroup containing $h_s$, such that $C_h/C^o_h$ is generated by $C^o_hh_s$, so $C_h\in\mathcal C_i(\dsF)$. The splitting field of $\det(T-h)=det(T-h_s)$ is the splitting field of $D_h$. Since $D_h\subset C_h$, we have that the splitting field of $D_h$ is contained in the splitting field of $C_h$. By the construction of $\Pi(H_i,C_h,\dsF)$ we have that $\Gal(\dsF_{C_h}/\dsF)$ is a subgroup of $\Pi(H_i,C_h,\dsF)$, and since $\Gal(\dsF_{D_h}/\dsF)$ is a quotient of $\Gal(\dsF_{C_h}/\dsF)$ the result follows. Since $\dsF_i\subset\dsF_{C_h}$, and $\Gal(\dsF_{C_h}/\dsF_i)\subset \W(H_i,C_h)$ (by lemma \ref{lem:splitng fld}) the second part follows in a similar way.
\end{proof}
\subsection{Outer Weyl groups and reduction modulo primes}
In this section we apply the above construction in the following setting: Let $\dsF=k$ be a number field, $\Sigma\subset\Gln(k)$ a finite set, $\Gamma=\langle\Sigma\rangle$, $H=\overline\Gamma$. Let $\mathcal O_k$ be the ring of integers of $k$. There exists a finite set of places $S$, such that $\Gamma\subset\Gln(\mathcal O_{k,S})\cap H=:H(\mathcal O_{k,S})$. A significant role in this paper is played by the Frobenius conjugacy class. We give here a reminder for that. Let $\ideal p\triangleleft\mathcal O_k$ be an unramified prime ideal, and let $k_{\ideal p},\mathcal O_{\ideal p}$ be the corresponding field completion, and valuation ring, respectively. Let $\dsF_{\ideal p}$ be the residue field, and $\pi_{\ideal p}:\mathcal O_{\ideal p}\to\dsF_{\ideal p}$ be the residue map. Denote by $k^{nr}_{\ideal p}\subset\ck_{\ideal p}$ the maximal unramified extension, and the algebraic closure of $k_{\ideal p}$ of $k_{\ideal p}$, respectively. It is a well known fact (c.f. chapter V in \cite{CaF}) that $\Gal(k^{nr}_{\ideal p}/k)\simeq \Gal(\overline{\dsF_{\ideal p}}/\dsF_{\ideal p})$ are isomorphic, and thus there exists a unique element denoted $\Frob_{\ideal p}$ that corresponds to the generator of $\Gal(\overline{\dsF_{\ideal p}}/\dsF_{\ideal p})$ (denoted also by $\Frob_{\ideal p}$ sending $x\mapsto x^{N(\ideal p)}$, where $N(\ideal p)$ is the cardinality of $\dsF_{\ideal p}$). For an embedding $\ck\to\ck_{\ideal p}$ corresponds an inclusion $\Gal(\ck_{\ideal p}/k_{\ideal p})\to \Gal(\ck/k)$, which defines an element $\Frob_{\ideal p}\in \Gal(\ck/k)$. This element is defined up to a conjugation, and thus defines a conjugacy class of $\Gal(\ck/k)$, and, as in \S\ref{sec:F-Cartans}, defines a map from the regular semisimple elements of $H_i(k)$ to conjugacy classes of $\Pi(H_i,k)$. That is, the following map
$$
\theta_{\Frob_{\ideal p}}:\left(H_i(k)\right)_{sr}\to \Pi(H_i,k)^\sharp
$$
is well defined.

Let us now consider the reduction modulo prime ideals. Note that since $H$ is defined over $k$, by a finite number of polynomials, $H$ and therefore also $H^o$, and any coset of it, are defined over $\dsF_{\ideal p}$ for almost all prime ideals $\ideal p$ (depending only on $H$), by using the same polynomials defining $H$, regraded as polynomials over $\dsF_{\ideal p}$. Notice also, that given a coset $H_i$, the Weyl group $W(H_i)$, seen as $N_{H^o}(C)/C^o$ for a $k$-Cartan subgroup $C$ is also defined over $\dsF_{\ideal p}$ for all large enough $\ideal p$, and in fact as it is finite, it is isomorphic to the Weyl group defined over $k$.
The goal of this subsection is to prove the following proposition:
\begin{prop}\label{prop:reduction mod primes}
Let $k,\mathcal O_k,S, H,H^o,H_i$ be as above, and assume that $H_i$ splits over $k$. Then there exists a finite set of primes $S'$ containing $S$ such that
\begin{enumerate}
    \item There exists a non-empty open subset $V\subset \left(H_i(\ck)\right)_{rs}$ defined over $k$, such that for any $\ideal p\not\in S'$, if $h\in H_i(\mathcal O_{k,S})$ satisfying $\pi_{\ideal p}(h)\in V(\dsF_{\ideal p})$, then $h\in V(\mathcal O_{k,S})\subset\left(H_i(\mathcal O_{k,S})\right)_{rs}$, that is
\begin{multline*}
    \left\{h\in H_i(\mathcal O_{k,S}):h\not\in V(\mathcal O_{k,S})\right\}\subset\\
\left\{h\in H_i(\mathcal O_{k,S}):\pi_{\ideal p}(h)\not\in V(\dsF_{\ideal p})\forall\ideal p\not\in S'\right\}
\end{multline*}
    \item For any $\ideal p\not\in S'$ there exists a bijection $\alpha:\W(H_{i,\ck})^\sharp\to \W(H_{i,\overline{\dsF_{\ideal p}}})^\sharp$, such that for any $h\in H_i(\mathcal O_{k,S})$ if $\pi_{\ideal p}(h)\in\left(H_i(\dsF_{\ideal p})\right)_{sr}$, then $\alpha(\theta_{\Frob_{\ideal p}}(h))=\theta_{\Frob_{\ideal p}}(\pi_{\ideal p}(h))$. In particular, we have that the following diagram commutes:
    $$
\xymatrix{
H_i(\mathcal O_{k,S})_{rs}\supset V(\mathcal O_{k,S}) \ar@<5ex>[d]_{\pi_{\ideal p}} \ar[r]^(.59){\theta_{\Frob_{\ideal p}}}
& \W(H_i,\ck) \ar[d]^\alpha \\
H_i(\dsF_{\ideal p})_{rs}\supset V(\dsF_{\ideal p}) \ar[r]_(.59){\theta_{\Frob_{\ideal p}}} & \W(H_i,\overline{\dsF_{\ideal p}}) }
    $$
    and for any $U\in \W(H_i)^\sharp$
    \begin{multline*}
     \left\{h\in H_i(\mathcal O_{k,S}):h\in\left(H_i(\mathcal O_{k,S})\right)_{sr}{\text{ and }}\theta_{\Frob_{\ideal p}}(h)\ne U\;\forall\ideal p\not\in S'\right\}\subset\\\left\{h\in H_i(\mathcal O_{k,S}):\pi_{\ideal p}(h)\in\left(H_i(\dsF_{\ideal p})\right)_{sr}\wedge\theta_{\Frob_{\ideal p}}(\pi_{\ideal p}(h))\ne \alpha(U)\;\forall\ideal p\not\in S'\right\}
    \end{multline*}
\end{enumerate}
\begin{remark}\label{rk:H_i splits over all flds}
Notice that since $H_i$ is assumed to be split, there exists a Cartan subgroup $C$ associated to $H_i$ that splits, and therefore for all but finitely many primes, $H_i$ splits also over $\dsF_{\ideal p}$. In particular $\theta_{\Frob_{\ideal p}}\in\widetilde W(H_i)^\sharp$. Throughout the proof we will assume that the set $S'$ contains the finite set of exceptional primes.
\end{remark}
\begin{proof}
Let $V\subset H_i$ be the open subset given by Proposition \ref{prop:most lmnts rs}. Let $C$ be a fixed split Cartan subgroup, and $S'$ be such that $C$ splits for all $\ideal p\not\in S'$. Furthermore, let $A(C^o)$ be the set of roots of $C^o$. Since $C$ splits, also $C^o$ does, and choose $S'$ such that for all $\ideal p\not\in S'$ the elements of $A(C^o)$ are defined over $\dsF_{\ideal p}$, and are all different. Notice also that if an element $h$ is inside the kernel of a root of $C^o$, then its reduction modulo any prime will also be inside the kernel. Therefore, by the proof of Proposition \ref{prop:most lmnts rs} this shows that if $\pi_{\ideal p}(h)\in V(\overline{\dsF_{\ideal p}})$ then $\pi_{\ideal p}(h)$ is regular semisimple, and part (1) is proved.
For the second part, we follow Lemma 3.2 in \cite{JKZ}. We first construct the bijection $\alpha:\W(H_{i,\ck})^\sharp\to \W(H_{i,\overline{\dsF_{\ideal p}}})^\sharp$. Let $C_0<H$ be a fixed split $k$-Cartan subgroup associated to $H_i$. Let $\ideal p\not\in S$ be an unramified prime. The following maps
\begin{eqnarray*}
\left(N_{H^o}(C_0)/Z_{H^o}(C^o_0)\right)(\left(\mathcal O_{k,S}\right)_{\ideal p})\hookrightarrow \left(N_{H^o}(C_0)/Z_{H^o}(C^o_0)\right)(k_{\ideal p})=\W(H_{i,\ck},C_0)\\
\left(N_{H^o}(C_0)/Z_{H^o}(C^o_0)\right)(\left(\mathcal O_{k,S}\right)_{\ideal p})\twoheadrightarrow
\left(N_{H^o}(C_0)/Z_{H^o}(C^o_0)\right)(\dsF_{\ideal p})=\W(H_{i,\dsF_{\ideal p}},C_0)
\end{eqnarray*}
are indeed injective and surjective (notice that the latter is surjective by Hensel's Lemma, and the fact that $C_0$ splits), and since $\W(H_{i,\ck},C_0)\simeq \W(H_{i,\overline{\dsF_{\ideal p}}},C_0)$ are isomorphic (recall that these groups are isomorphic for $\ideal p$ outside a finite set by the discussion before the Proposition). Therefore, they are both isomorphisms, and as such define a bijection between conjugacy classes. Let $h\in H_i(\mathcal O_{k,S})$ be a regular semisimple element such that $\pi_{\ideal p}(h)\in H_i(\dsF_{\ideal p})$ is regular semisimple, $C_h=\langle \left(Z_{H^o}(h)\right)^o,h\rangle$ be the unique Cartan containing it, and $C_{h,\ideal p}$ be the unique Cartan containing $\pi_{\ideal p}(h)$. Let $x\in H^o(\overline{\dsF})$ be such that $C_0=xC_{h,\ideal p}x^{-1}$. Then by Proposition \ref{prop:Pi_bsc_props}\eqref{prop:Pi_bsc_props_iv} $\theta_{\Frob_{\ideal p}}(\pi_{\ideal p}(h))=[x^{-1}Frob_{\ideal p}(x)]$. By Hensel's Lemma applied to $\left(\mathcal O_{k,S}\right)_{\ideal p}^{nr}\to \overline{\dsF_{\ideal p}}$, there exists $\overline x\in N_{H^o}(H)(\left(\mathcal O_{k,S}\right)_{\ideal p}^{nr})$ that lifts $x$, and thus satisfy $\overline x^{-1}C_0 \overline x=C$, and thus $\theta_{\Frob_{\ideal p}}(h)=[\overline x^{-1}Frob_{\ideal p}(\overline x)]$. By the definition of the bijection of conjugacy classes, the result is proved.
\end{proof}
\end{prop}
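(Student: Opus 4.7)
The plan is to arrange matters so that the open locus $V$ of regular semisimple elements, the complementary ``root walls'', and the Weyl group are all defined by nice integral data over a suitably enlarged ring $\mathcal{O}_{k,S'}$, so that Hensel's lemma and reduction modulo $\ideal p$ preserve the relevant structures. More concretely, I would first fix a split $k$-Cartan subgroup $C_0\in\mathcal{C}_i(k)$, which exists by the assumption that $H_i$ splits over $k$, and then enlarge $S$ to $S'$ so that for every $\ideal p\notin S'$: (a) $H$, $H^o$, $H_i$, $C_0$, $C_0^o$, $N_{H^o}(C_0)$ and $Z_{H^o}(C_0^o)$ all have smooth integral models of the correct dimensions over $(\mathcal{O}_{k,S})_{\ideal p}$; (b) $C_0$ remains split modulo $\ideal p$ and the root system $A(C_0^o)$ is defined over $\dsF_{\ideal p}$ with pairwise distinct roots; (c) the open subset $V$ of $H_i$ supplied by Proposition \ref{prop:most lmnts rs} has a well-defined reduction $V_{\dsF_{\ideal p}}$ that again carves out a regular semisimple locus, as described in the proof of that proposition.

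For part (1), the locus $V$ is cut out by the non-vanishing of a polynomial expression coming from the roots $A(C_0^o)$ evaluated on the semisimple part of an element, an expression which, after conditions (a)--(c), is defined integrally. Since $\pi_{\ideal p}$ is a ring homomorphism, a root vanishing on $h$ forces it to vanish on $\pi_{\ideal p}(h)$; taking the contrapositive gives the claimed inclusion of bad sets.

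For part (2), I would first construct $\alpha$. The Weyl group scheme $\mathcal{W}:=N_{H^o}(C_0)/Z_{H^o}(C_0^o)$ is finite \'etale over $\mathcal{O}_{k,S'}$; the split-ness of $C_0$ combined with Hensel's lemma makes both maps $\mathcal{W}(k_{\ideal p})\hookleftarrow \mathcal{W}((\mathcal{O}_{k,S})_{\ideal p})\twoheadrightarrow \mathcal{W}(\dsF_{\ideal p})$ bijections, and identifies $\mathcal{W}(k_{\ideal p})$ with $\W(H_{i,\ck},C_0)$ and $\mathcal{W}(\dsF_{\ideal p})$ with $\W(H_{i,\overline{\dsF_{\ideal p}}},C_0)$. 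Composing gives an isomorphism of Weyl groups and hence the bijection $\alpha$ on conjugacy classes. To check the Frobenius compatibility, given $h \in H_i(\mathcal{O}_{k,S})$ regular semisimple with regular semisimple reduction, let $C_h$ and $C_{h,\ideal p}$ be the unique associated Cartans. Applying Hensel's lemma to the conjugation morphism and the element $x \in H^o(\overline{\dsF_{\ideal p}})$ with $x^{-1}C_0x=C_{h,\ideal p}$, I would lift $x$ to $\overline{x}\in H^o((\mathcal{O}_{k,S})_{\ideal p}^{nr})$ with $\overline{x}^{-1}C_0\overline{x}=C_h$. By Proposition \ref{prop:Pi_bsc_props}(3), the class $\theta_{\Frob_{\ideal p}}(h)$ is represented by $\overline{x}^{-1}\Frob_{\ideal p}(\overline{x})\in N_{H^o}(C_0)$, and $\theta_{\Frob_{\ideal p}}(\pi_{\ideal p}(h))$ is represented by its reduction $x^{-1}\Frob_{\ideal p}(x)$; by construction these are matched by $\alpha$, which yields the commutative diagram and, by taking preimages, the inclusion of bad sets for any $U\in\W(H_i)^\sharp$.

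The main obstacle is the bookkeeping behind the lift $\overline{x}$: one has to ensure that it really lives in the unramified integral ring (so that both $\Frob_{\ideal p}(\overline{x})$ and its reduction make sense and commute with the isomorphism of Weyl groups) and that, despite the non-connectedness of $H$ and the combinatorics of the coset $H_i$, the Cartan formalism of Section \ref{sec:Cartans} interacts correctly with Hensel's lemma. This is essentially Lemma 3.2 of \cite{JKZ} adapted from the connected semisimple case to ours; once the lift is in hand, the rest is a purely formal diagram chase using Proposition \ref{prop:Pi_bsc_props}.
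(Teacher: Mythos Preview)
Your proposal is correct and follows essentially the same route as the paper: fix a split $k$-Cartan $C_0$, enlarge $S$ to $S'$ so that the root data and Weyl group scheme spread out integrally, use preservation of root-vanishing under reduction for part~(1), and for part~(2) obtain $\alpha$ from the Hensel isomorphisms on $N_{H^o}(C_0)/Z_{H^o}(C_0^o)$ and then lift the conjugating element $x$ to $\overline{x}$ over $(\mathcal O_{k,S})_{\ideal p}^{nr}$ to match the two Frobenius cocycles. The paper likewise points to Lemma~3.2 of \cite{JKZ} for this step, so your identification of the main technical point is on target.
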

\section{Outer Weyl groups over finite fields}\label{sec:reduction mod primes}
In this section we prove some properties of outer Weyl groups for groups defined over finite fields. Here $H$ is a linear algebraic group defined and split over a finite field $k=\dsF_q$ of $q$ elements, $H^o$, its unit component, is semisimple, and we fix a connected component $H_i$. We recall first the definition of the function $\theta$ defined in the preceding section. For a Cartan subgroup $C$, we have defined in Section \ref{sec:Cartans} a homomorphism
$$
\phi_C:\Gal(\ck/k)\to \W(H_i,C)
$$
(note that the image is indeed inside $\W(H_i)$ since we assume that $H$ splits over $k$). Let $h\in H_i$ be a semisimple regular element, and let $C_h$ be the unique Cartan subgroup containing it. Denote by $\Frob$ the Frobenius element in $\Gal(\ck/k)$. We defined in Section \ref{sec:Cartans} the following functions
\begin{align}\label{def:theta function4Cartan}
\theta_{\Frob}:\mathcal C_i(k)&\to \W(H_i)^\sharp\\
\nonumber C &\mapsto[\phi_{C}(\Frob)]
\end{align}
and
\begin{align}\label{def:theta function}
\theta_{\Frob}:H_i(k)_{sr}&\to \W(H_i)^\sharp\\
\nonumber h&\mapsto[\phi_{C_h}(\Frob)]
\end{align}
We abbreviate $\theta:=\theta_{\Frob}$. Our goal in this section is to prove that for a fixed connected component $H_i$, and for a fixed conjugacy class $U\in \W(H_i)^\sharp$, the density of elements $h\in H_i(k)$ such that $\theta(h)=U$ is positive as the size of the field $k$ grows. For a connected component $H_i$, let $\mathcal C_i$ and $\mathcal C_i(k)$ be as before.
The following theorem, known as the Lang-Steinberg Theorem ( c.f. \cite{St} Theorem 10.1), plays a crucial role in the sequel, and we therefore state it.
\begin{thm}[Lang-Steinberg Theorem]\label{thm:Lang Steinberg}
Let $G$ be a connected reductive linear algebraic group, and let $F$ be an endomorphism of $G$. Suppose that $F$ has only finitely many fixed points. Then the map $x\mapsto x^{-1}F(x)$ is surjective.
\end{thm}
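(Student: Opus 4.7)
The plan is to prove surjectivity in two stages: first, show the Lang map $L\colon G\to G$ defined by $L(x)=x^{-1}F(x)$ is dominant by a differential computation that uses the hypothesis; then upgrade dominance to surjectivity via a $G$-equivariance argument on twisted conjugation orbits, exploiting the connectedness of $G$.

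For dominance, a direct calculation from $L(\exp(tX))=\exp(-tX)F(\exp(tX))$ yields $dL_e = dF_e - \mathrm{id}_{\mathfrak{g}}$ on the Lie algebra $\mathfrak{g}=T_e G$. The scheme-theoretic fiber $L^{-1}(e)$ is precisely the fixed-point subgroup $G^F$; by hypothesis this is finite (hence zero-dimensional), so the tangent space at the identity vanishes and $\ker(dL_e)=0$. Thus $dL_e$ is an isomorphism, so $L$ has finite generic fiber and is dominant, and its image therefore contains a nonempty Zariski-open subset $U\subseteq G$.

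For surjectivity, consider the $F$-twisted conjugation action of $G$ on itself, $(h,x)\mapsto h^{-1}xF(h)$. A direct computation shows that the image $L(G)$ is precisely the orbit of the identity under this action. The stabilizer of a point $x_0$ is the fixed-point subgroup of the endomorphism $\mathrm{Inn}(x_0)\circ F$, which remains finite by the same reasoning applied to $F$ (the finiteness being preserved under composition with inner automorphisms). Every orbit therefore has dimension $\dim G$; being locally closed and full-dimensional, each orbit is open in $G$. The orbits thus form an open partition of the connected (hence irreducible) variety $G$, forcing a single orbit, so $L(G)=G$.

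The main obstacle is the differential calculation together with the persistence of the finiteness hypothesis under inner twisting, which is what makes the orbit argument go through uniformly in $x_0$; the complete bookkeeping is carried out in Steinberg's monograph cited in the statement.
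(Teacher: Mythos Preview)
The paper does not prove this theorem; it is quoted from Steinberg's monograph (Theorem~10.1 of \cite{St}) and used as a black box in \S\ref{sec:reduction mod primes}. Your concluding deferral to Steinberg is therefore exactly what the paper itself does, and there is no proof in the paper to compare against.

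Since you did offer a sketch, two substantive gaps are worth naming. First, the inference ``$G^F$ finite, hence zero-dimensional, so its tangent space vanishes, so $\ker(dL_e)=0$'' is false in positive characteristic, which is precisely the setting in which the paper applies the theorem: a finite group scheme can be non-reduced. Over $\overline{\bF_p}$ take $G=\mathbb G_m$ and $F(x)=x^{p+1}$; then $G^F=\mu_p$ has a single closed point but $dF_e=p+1=1$, so $dL_e=0$. Dominance of $L$ still follows, but by upper semicontinuity of fibre dimension (the fibre $L^{-1}(e)=G^F$ is zero-dimensional), not via the differential. Second, and more seriously, your claim that $\mathrm{Inn}(x_0)\circ F$ again has finitely many fixed points ``by the same reasoning applied to $F$'' has no content: the finiteness of $G^F$ is a \emph{hypothesis}, not the output of an argument you can rerun after composing with an inner automorphism. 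Showing that every twisted stabilizer is finite is the genuine crux of the theorem; Steinberg handles it using the additional hypothesis that $F$ be surjective (present in his Theorem~10.1, though omitted in the paper's restatement), and it is not mere bookkeeping.
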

Throughout this section, denote for any Cartan subgroup $C$, $\pi:N_{H^o}(C)\to W(H_i,C)$, and $\widetilde\pi:N_{H^o}(C)\to \widetilde W(H_i,C)$.
\begin{lem}\label{lem:fin fld conj corsp}
Let $H$ and $H_i$ be as above. Let $\widetilde C$ be a split $k$-Cartan subgroup associated to $H_i$, and denote by $W^F=W^F(H_i,\widetilde C):=\{w\in W(H_i,\widetilde C):\Frob(w)=w\}$. Then
\begin{enumerate}
\item
Every $C\in\mathcal C_i(k)$ defines a $\W(H_i,\widetilde C)$ conjugacy class, given by $\theta(C)$.
\item
For every $\W(H_i,\widetilde C)$-conjugacy class $U$ there exists a Cartan subgroup $C\in\mathcal C_i(k)$ such that $\theta(C)=U$.
\item
Let $C_1,C_2\in\mathcal C_i(k)$, and let $x_i,i=1,2$ be such that $C_i=\widetilde C^{x_i},i=1,2$. Assume that $\pi(x_1^{-1}\Frob(x_1))$ is $W^F$-conjugate to $\pi(x_2^{-1}\Frob(x_2))$ then $C_1$ is conjugate to $C_2$ by an element of $H^o(k)$. Conversely, if $C_1$ and $C_2$ are conjugated by an element of $H^o(k)$ then $\theta(C_1)=\theta(C_2)$
\end{enumerate}
\end{lem}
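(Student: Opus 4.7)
The overall strategy is the Lang--Steinberg parametrization of $\Frob$-stable conjugates of a fixed torus by conjugacy classes of the Weyl group, adapted here to the outer Weyl group $\widetilde W(H_i,\widetilde C)$. Since $\widetilde C$ splits over $k$, equation~\eqref{eq:Gal_acts_Weyl} forces $\Frob$ to act trivially on $\widetilde W(H_i,\widetilde C)$, so $W^F=W$ and $F$-conjugacy coincides with ordinary conjugacy. For (1), by Proposition~\ref{prop:Cartan_bsc_prpts} pick $x\in H^o(\ck)$ with $C=\widetilde C^{x}$; then $\Frob(C)=C$ forces $n:=x^{-1}\Frob(x)\in N_{H^o}(\widetilde C)$. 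Proposition~\ref{prop:Pi_bsc_props}(iii), combined with $\phi_{\widetilde C}(\Frob)=\mathrm{id}$, identifies $\theta(C)$ with $[\widetilde\pi(n)]\in\widetilde W(H_i,\widetilde C)^\sharp$. Replacing $x$ by $xn'$ for $n'\in N_{H^o}(\widetilde C)$ changes $n$ to $(n')^{-1}n\,\Frob(n')$, which in $\widetilde W$ is plain conjugation by $\widetilde\pi(n')$ thanks to the triviality of the Frobenius action, yielding well-definedness.

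For (2), lift a representative $w\in U$ to $n\in N_{H^o}(\widetilde C)$, apply Lang--Steinberg (Theorem~\ref{thm:Lang Steinberg}) to the connected group $H^o$ to produce $x\in H^o(\ck)$ with $x^{-1}\Frob(x)=n$, and set $C:=\widetilde C^{x}$; the $\Frob$-stability of $C$ is immediate from $n\in N_{H^o}(\widetilde C)$. To certify $C\in\mathcal C_i(k)$, normality of $H^o$ in $H$ gives $x\widetilde gx^{-1}\in H_i\cap C$ representing a generator of $C/C^o$, and Lang's theorem applied to the connected component $C^o$ (with the Frobenius naturally induced on it by the splitting of $\widetilde C$) lifts the appropriate coset to a $k$-rational representative inside $H_i\cap C(k)$. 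Part (1) then yields $\theta(C)=U$.

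For (3), the converse is a direct calculation: if $h\in H^o(k)$ satisfies $C_1=hC_2h^{-1}$, write $x_1=hx_2m$ for some $m\in N_{H^o}(\widetilde C)$; using $\Frob(h)=h$ one finds $x_1^{-1}\Frob(x_1)=m^{-1}\bigl(x_2^{-1}\Frob(x_2)\bigr)\Frob(m)$, which in $\widetilde W$ is ordinary conjugation by $\widetilde\pi(m)\in W^F$ by splitting. For the forward direction, lift the $W^F$-conjugator $u$ to $m\in N_{H^o}(\widetilde C)$ and set $h:=x_1m^{-1}x_2^{-1}$. A direct computation shows that the conjugacy hypothesis forces $h^{-1}\Frob(h)$ to lie, after conjugation by $x_2$, inside $\widetilde C^o=\ker\pi$; applying Steinberg's extension of Lang's theorem to the endomorphism $t\mapsto t^{-1}\cdot n_2\Frob(t)n_2^{-1}$ of the connected torus $\widetilde C^o$ (which has only finitely many fixed points, hence is surjective on $\widetilde C^o$) produces $t\in\widetilde C^o$ such that replacing $m$ by $tm$ (preserving $\pi(m)=u$, as $t\in\ker\pi$) achieves $\Frob(h)=h$ exactly, placing $h\in H^o(k)$ with $C_1=hC_2h^{-1}$.

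The main obstacle is this final Lang-type adjustment in (3): passing from a Weyl-group-level conjugacy to an actual $H^o(k)$-conjugacy requires trading the degree of freedom in $\ker\pi=\widetilde C^o$ against the error term in $\widetilde C^o$, and an analogous issue occurs in (2) when verifying that the cyclic generator of $C/C^o$ has a $k$-rational representative in $H_i$. Both reductions rely on Lang's theorem for a connected (possibly twisted) torus, enabled by the splitting hypothesis on $\widetilde C$.
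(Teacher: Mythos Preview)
Your argument is correct and follows the same Lang--Steinberg parametrization the paper uses; in part~(3) the paper applies Lang to the ordinary Frobenius on the $k$-torus $C_2^o$ rather than to a twisted Frobenius on $\widetilde C^o$, but these are the same computation after conjugation by $x_2$, and in part~(2) you are in fact more careful than the paper in supplying a $k$-rational generator of $C/C^o$ inside $H_i$. One caution: your opening claim that $W^F=W$ is stronger than what splitting of $\widetilde C$ guarantees---as the paper's remark following the lemma notes, only $\widetilde W(H_i,\widetilde C)^F=\widetilde W(H_i,\widetilde C)$ follows---but your proof never actually uses the stronger assertion, so it stands.
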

\begin{remark}
Note that as $H_i$ splits, $\W(H_i,\widetilde C)=\W(H_i,\widetilde C)^F$, but we do not know if also $W(H_i,\widetilde C)=W(H_i,\widetilde C)^F$.
\end{remark}
\begin{proof}
 For $C\in\mathcal C_i(k)$, there exists $x\in H^o(\ck)$ such that $C^x=\widetilde{C}$, and $\theta(C)=[x^{-1}\Frob(x)]\in W(H_i,\widetilde C)^\sharp$. If $\widetilde C=C^y$ for $y\in H^o(\ck)$, we have that $x^{-1}y=n\in N_{H^o}(\widetilde C)$, and
$$
y^{-1}\Frob(y)=n^{-1}x^{-1}\Frob(x)\Frob(n)=n^{-1}x^{-1}\Frob(x)n\pmod{Z_{H^o}(\widetilde C)}
$$
because $n=\Frob(n)\pmod {Z_{H^o}(\widetilde C)}$ since $\widetilde C$ splits. Therefore we get a corresponding equality in $\W(H_i,\widetilde C)$. Let $U\subset\W(H_i,\widetilde C)$ be a conjugacy class, and let $n\in N_{H^o}(C)$ a corresponding element. Then by Lang-Steinberg Theorem there exists $g\in H^o(\ck)$ such that $g^{-1}\Frob(g)=n$. Then $\widetilde C^{g^{-1}}$ is a $k$-Cartan subgroup such that $\theta(\widetilde C^{g^{-1}})=U$.

Assume now that $C_1,C_2\in\mathcal C_i(k)$ are such that $C_1^{x_1}=\widetilde C=C_2^{x_2}$, and $\pi(x_1^{-1}\Frob(x_1))$ and $\pi(x_2^{-1}\Frob(x_2))$ are conjugated by an element $W^F$. Let $n\in N_{H^o}(\widetilde C)$ be a conjugating element, that is
\begin{equation}
n^{-1}x_1^{-1}\Frob(x_1)n=x_2^{-1}\Frob(x_2)\pmod{\widetilde{C}^o}
\label{eq:conj prf}
\end{equation}
Since $w\in W^F$, we get that $n=\Frob(n)\pmod{\widetilde{C}^o}$, and so from \eqref{eq:conj prf} we get that
$$
x_2n^{-1}x_1^{-1}\Frob(x_1nx_2^{-1})\in (\widetilde{C}^o)^{x_2^{-1}}=C^o_2
$$
Applying the Lang-Steinberg theorem for $C^o_2,k$ and $\Frob$, we find that there exists $g\in C^o_2$ such that
\begin{eqnarray}
x_2n^{-1}x_1^{-1}\Frob(x_1nx_2^{-1})=g^{-1}\Frob(g)\\
gx_2n^{-1}x_1^{-1}=\Frob(gx_2n^{-1}x_1^{-1})
\label{eq:}
\end{eqnarray}
hence $y=gx_2n^{-1}x_1^{-1}\in H^o(k)$, and it satisfies $C_1^y=C_2$. The converse direction is clear.
\end{proof}
\begin{lem}\label{lem:Weyl centralizers}
Let $H,H^o$ and $H_i$ be as above. Let $\widetilde C\in\mathcal C_i(k)$ be a split Cartan subgroup, and let $C\in\mathcal C_i(k)$. Let $g\in H^o(\ck)$ be such that $C=\widetilde C^g$, and denote by  $w=\widetilde\pi(g^{-1}\Frob(g))\in \W(H_i,\widetilde C)$. Let $N(k)=N_{H^o}(C)(k), C^o(k)$ be the $k$-points of $N_{H^o}(C), C^o$ respectively. Then
\begin{enumerate}
\item Let $\pi_W:W(H_i,\widetilde C)\twoheadrightarrow\W(H_i,\widetilde C)$, then for any $x\in W^F(H_i,C)$
$$
\pi(gxg^{-1})\in Z_{\W(H_i,\widetilde C)}(w)
$$
\item Let $A=|\ker(\pi_W)|$. Then
$$
\frac{|N(k)|}{|C^o(k)|}\leq A|Z_{\W(H_i,\widetilde C)}(w)|
$$
In particular, there exists a constant $c>$ independent of $k$, such that $\frac{|N(k)|}{|C^o(k)|}<c$.
\end{enumerate}
\end{lem}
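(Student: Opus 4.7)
The plan is to prove (1) by a Frobenius-twist computation in $N_{H^o}(\widetilde C)$ exploiting that $\widetilde C$ splits over $k$, and to derive (2) from (1) by a kernel-image estimate combined with Lang's theorem applied to the connected torus $C^o$.

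For (1), lift $x \in W^F(H_i,C)$ to $n \in N_{H^o}(C)(\ck)$; by hypothesis $\Frob(n)=nz$ for some $z \in C^o$. Set $\widetilde n := gng^{-1} \in N_{H^o}(\widetilde C)$, and (working in $\W(H_i,\widetilde C)$ via the $g$-conjugation identification $\W(H_i,C) \simeq \W(H_i,\widetilde C)$) take $y := \Frob(g)g^{-1} \in N_{H^o}(\widetilde C)$ as the representative of $w$. Writing $\Frob(g) = yg$, one computes
\[
\Frob(\widetilde n) \;=\; \Frob(g)\,n\,\Frob(g)^{-1} \cdot \Frob(g)\,z\,\Frob(g)^{-1} \;=\; y\,\widetilde n\,y^{-1} \cdot \Frob(g)\,z\,\Frob(g)^{-1}.
\]
Since $\Frob(g)$ conjugates $C^o$ onto $\widetilde C^o$, the last factor lies in $\widetilde C^o \subseteq \ker\widetilde\pi$; hence $\widetilde\pi(\Frob(\widetilde n)) = w\,\widetilde\pi(\widetilde n)\,w^{-1}$. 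On the other hand, because $\widetilde C$ splits over $k$, Frobenius acts trivially on $\W(H_i,\widetilde C) \subset \operatorname{Aut}(X(\widetilde C))$, so $\widetilde\pi(\Frob(\widetilde n)) = \widetilde\pi(\widetilde n)$. Combining yields $\widetilde\pi(\widetilde n) = w\,\widetilde\pi(\widetilde n)\,w^{-1}$, i.e.\ $\widetilde\pi(\widetilde n) \in Z_{\W(H_i,\widetilde C)}(w)$, which is~(1).

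For (2), Lang's theorem applied to the connected $k$-torus $C^o$ gives $H^1(k,C^o) = 0$, so the short exact sequence $1 \to C^o \to N_{H^o}(C) \to W(H_i,C) \to 1$ of $k$-groups remains exact on $k$-points; hence $|N(k)|/|C^o(k)| = |W(H_i,C)(k)| = |W^F(H_i,C)|$. Part~(1) now produces a homomorphism $\psi \colon W^F(H_i,C) \to Z_{\W(H_i,\widetilde C)}(w)$ sending $nC^o \mapsto \widetilde\pi(gng^{-1})$. Its kernel consists of classes $nC^o$ with $gng^{-1} \in \ker\widetilde\pi$, i.e.\ $n \in Z_{H^o}(C)$, so $\ker\psi$ injects into $Z_{H^o}(C)/C^o$, which is isomorphic via $g$-conjugation to $Z_{H^o}(\widetilde C)/\widetilde C^o = \ker\pi_W$, giving $|\ker\psi| \le A$. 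Therefore
\[
\frac{|N(k)|}{|C^o(k)|} \;=\; |W^F(H_i,C)| \;\le\; |\ker\psi|\cdot|\operatorname{image}(\psi)| \;\le\; A\cdot |Z_{\W(H_i,\widetilde C)}(w)|.
\]
The uniform bound follows by taking $c := A\cdot |\W(H_i,\widetilde C)| + 1$: both $A$ and $|\W(H_i,\widetilde C)|$ depend only on $H$, not on $k$.

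The delicate step is the Frobenius-twist identity in (1): one must carefully distinguish elements of $N_{H^o}(C)$ from those of $N_{H^o}(\widetilde C)$ and select the representative $y = \Frob(g)g^{-1}$ of $w$ (which corresponds, under the $g$-conjugation identification, to the element $g^{-1}\Frob(g)$ appearing in the statement). This is the coset-theoretic analogue of the classical dictionary in the theory of finite groups of Lie type that identifies $W^F$ of a twisted maximal torus with the centralizer of the twisting element in the absolute Weyl group; the argument above specialises to exactly that classical statement when $\ker\pi_W$ is trivial (e.g.\ when $H^o$ is semisimple simply connected, so $Z_{H^o}(\widetilde C) = \widetilde C^o$).
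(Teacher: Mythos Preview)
Your proof is correct and follows essentially the same approach as the paper's: both arguments compute $\Frob(gng^{-1})$ as a conjugate of $gng^{-1}$ by the twisting element, invoke the splitting of $\widetilde C$ to identify $\widetilde\pi(\Frob(\widetilde n))$ with $\widetilde\pi(\widetilde n)$, and conclude part~(1); for part~(2) both use Lang's theorem on the connected torus $C^o$ to get $|N(k)|/|C^o(k)|=|W^F(H_i,C)|$ (the paper cites Carter \S1.17 for this, which is exactly the Lang argument you wrote out) and then bound via the map into the centralizer.

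Your handling of the representative of $w$ is actually more careful than the paper's. The statement writes $w=\widetilde\pi(g^{-1}\Frob(g))$ and simultaneously wants $gxg^{-1}\in N_{H^o}(\widetilde C)$; depending on which convention one fixes for $\widetilde C^g$, one of these two fails to live in $N_{H^o}(\widetilde C)$. You resolve this by taking $y=\Frob(g)g^{-1}$, which does lie in $N_{H^o}(\widetilde C)$ under the convention making $gxg^{-1}\in N_{H^o}(\widetilde C)$, and you correctly note that $y$ is the image of $g^{-1}\Frob(g)\in N_{H^o}(C)$ under the $g$-conjugation identification $\W(H_i,C)\simeq\W(H_i,\widetilde C)$. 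The paper's proof contains a few typos at exactly this point (e.g.\ it writes $x\in N_{H^o}(\widetilde C)$ and then $gxg^{-1}\in W(H_i,\widetilde C)$), so your explicit treatment is an improvement.
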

\begin{proof}
We first note that conjugation by $g$ sends $N_{H^o}(\widetilde C)$ to $N_{H^o}(C)$, and $\widetilde C^o$ to $C^o$. Let $x\in N_{H^o}(\widetilde C)$, and let $\Frob$ be the Frobenius automorphism of $k$. Then $gxg^{-1}\in W(H_i,\widetilde C)$. Notice that
$$
\Frob(gxg^{-1})=gg^{-1}\Frob(g)\Frob(x)\Frob(g)^{-1}gg^{-1}=gw^{-1}\Frob(x)wg^{-1}
$$
where all equalities are as elements of $W(H_i,\widetilde C)$. Therefore, if $gxg^{-1}\in W^F(H_i,\widetilde C)$, then
$$
gxg^{-1}=gw^{-1}\Frob(x)wg^{-1}\Rightarrow x=w^{-1}\Frob(x)w
$$
since $\widetilde C$ splits which implies that $\pi_W(x)=\pi_W(\Frob(x))$. We get that $w^{-1}\pi_W(x)w=\pi_W(x)$.

 By \cite{Ca} \S1.17 we have that $N(k)/C^o(k)$ is isomorphic to $(N/C^o)(k)$, the subgroup of $W(H_i,C)$ consisting of Frobenius fixed points, that is $W^F(H_i,C)$, which concludes the proof of the second part.
\end{proof}
\begin{prop}\label{prop:Conjugacy_density}
Let $H,H^o$ and $H_i$ be as above, and let $U\in W(H_i)^\sharp$ a fixed conjugacy class. Then there exists a constant $c>0$ (independent of $k$), such that
\begin{equation}\label{eq:conjugacy_density}
\frac{|\left\{g\in\left(H_i(k)\right)_{sr}:\theta(g)\in U\right\}|}{|H_i(k)|}\geq c>0\quad\text{as }|k|\to\infty
\end{equation}
\end{prop}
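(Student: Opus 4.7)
The plan is to obtain a lower bound by restricting attention to a single $H^o(k)$-orbit of Cartan subgroups with $\theta$-value equal to $U$. By Lemma~\ref{lem:fin fld conj corsp}(2), pick $C\in\mathcal C_i(k)$ with $\theta(C)=U$, and by the converse direction of Lemma~\ref{lem:fin fld conj corsp}(3), every $H^o(k)$-conjugate $C'$ of $C$ again satisfies $\theta(C')=U$. Since every regular semisimple $g\in H_i(k)$ lies in a unique $k$-Cartan $C_g\in\mathcal C_i(k)$ with $\theta(g)=\theta(C_g)$, we have the disjoint-union inclusion
\begin{equation*}
\{g\in (H_i(k))_{sr}:\theta(g)=U\}\supseteq\bigsqcup_{C'\in H^o(k)\cdot C}(H_i\cap C')(k)_{sr},
\end{equation*}
indexed by an orbit of size $|H^o(k)|/|N_{H^o}(C)(k)|$, each summand having the same cardinality as $(H_i\cap C)(k)_{sr}$. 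Using $|H_i(k)|=|H^o(k)|$ (since $H_i$ is a single $H^o$-coset, nonempty over $k$ because $\mathcal C_i(k)\neq\emptyset$), the claim reduces to showing
\begin{equation*}
\frac{|(H_i\cap C)(k)_{sr}|}{|N_{H^o}(C)(k)|}\geq c>0
\end{equation*}
uniformly as $q=|k|\to\infty$.

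For the numerator, since $C\in\mathcal C_i(k)$, its component group $C/C^o$ is generated by $C^og$ for some $g\in H_i(k)$, so the coset $gC^o\subseteq H_i\cap C$ is a $k$-torsor over $C^o$ and $|(H_i\cap C)(k)|\geq |C^o(k)|$, which is a polynomial in $q$ of degree $\dim C^o$. The regular semisimple locus $(H_i\cap C)_{sr}$ is Zariski open and nonempty in $H_i\cap C$, its complement being a proper closed subvariety cut out by the vanishing of certain roots of $C^o$ (cf.\ Proposition~\ref{prop:most lmnts rs}). Hence this complement has dimension strictly less than $\dim C^o$, and by a Lang--Weil estimate contributes only $O(q^{\dim C^o-1})$ $k$-rational points, yielding $|(H_i\cap C)(k)_{sr}|\geq (1-O(q^{-1}))|C^o(k)|$.

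For the denominator, Lemma~\ref{lem:Weyl centralizers}(2) bounds $|N_{H^o}(C)(k)|/|C^o(k)|$ by a constant independent of $k$. Combining the two estimates yields the desired uniform lower bound. The main obstacle is the Lang--Weil-type control in the numerator: one must confirm that the non-regular locus in $H_i\cap C$ is a proper subvariety whose defining equations have complexity bounded independently of $k$, so that its $k$-point count is genuinely $O(q^{\dim C^o-1})$. Granted this geometric input (which is essentially the content of Proposition~\ref{prop:most lmnts rs}), the rest of the argument is orbit--stabilizer bookkeeping combined with Lemma~\ref{lem:Weyl centralizers}.
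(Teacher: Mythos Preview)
Your proposal is correct and follows essentially the same route as the paper: fix a Cartan $C\in\mathcal C_i(k)$ with $\theta(C)=U$, count over its $H^o(k)$-orbit via orbit--stabilizer to reduce to $\frac{|(H_i\cap C)(k)_{sr}|}{|N_{H^o}(C)(k)|}$, bound the numerator below by $|C^o(k)|$ up to an $o(1)$ correction for non-regular points, and bound the denominator via Lemma~\ref{lem:Weyl centralizers}. The paper handles the regular-semisimple correction by an unspecified $o(1)$, whereas you spell it out as a Lang--Weil estimate on the proper closed complement; both are the same idea, and your disjointness justification (uniqueness of $C_g$ for regular semisimple $g$) is if anything cleaner than the paper's summation.
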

\begin{proof}
Denote by $\mathcal C_i(k,U)$ the set of Cartan subgroups $C\in\mathcal C_i(k)$ such that $\theta(C)=U$. This is a nonempty set by Lemma \ref{lem:fin fld conj corsp}. Let $C_1\in\mathcal C_i(k,U)$ be a fixed Cartan subgroup. Then
\begin{eqnarray*}
\frac{\left\{g\in\left(H_i(k)\right)_{sr}:\theta(g)\in U\right\}}{|H_i(k)|}=\frac{|\left\{g\in H_i(k)_{sr}: \exists C\in\mathcal C_i(k,U)\mbox{ s.t } g\in C\right\}|}{|H_i(k)|}\geq\\
\frac{|\left\{g\in H_i(k)_{sr}: \exists x\in H^o(k)\mbox{ s.t } g\in C_1^x\right\}|}{|H_i(k)|}=
\frac1{|H_i(k)|}\sum_{\substack{C=C_1^x\\x\in H^o(k)}}|C\cap H_i(k)|+o(1)
\end{eqnarray*}
Where in the last line we use the fact that most elements are semisimple regular, and the fact that if $C, C'$ are $H^o(k)$-conjugate then $\theta(C)=\theta(C')$. We now get that
\begin{eqnarray*}
\frac1{|H_i(k)|}\sum_{\substack{C=C_1^x\\x\in H^o(k)}}|C\cap H_i(k)|=\frac1{|H_i(k)|}\sum_{g\in H^o(k)/N_{H^o}(C_1)(k)}|g^{-1}Cg\cap H_i(k)|=\\
\frac1{|H_i(k)|}\sum_{g\in H^o(k)/N_{H^o}(C_1)(k)}|g^{-1}(C_1\cap H_i(k))g|=\frac{|C_1\cap H_i(k)|}{|N_{H^o}(C_1)(k)|}
\end{eqnarray*}
 Let $h\in C_1\cap H_i(k)$ be a fixed element, and write $H_i(k)=H^o(k)h$. Therefore
$$
|C_1\cap H_i(k)| =|C_1\cap H^o(k)h|=|C_1\cap H^o(k)|\geq|C_1^o(k)|
$$
since $C_1^o\subset\left(C_1\cap H^o\right)$.
Therefore,
\begin{eqnarray*}
\frac{|C_1\cap H_i(k)|}{|N_{H^o}(C_1)(k)|}\geq\frac{|C_1^o(k)|}{|N_{H^o}(C_1)(k)|}\geq\\
\frac{A}{|Z_{W(H_i,C_1)}(w)|}=A\frac{|U|}{|\W(H_i)|}>0
\end{eqnarray*}
where in the first equality we use $|H^o(k)|=|H_i(k)|$, and in the last line we use Lemma \ref{lem:Weyl centralizers} and that the size of a conjugacy class $U$ in $\W(H_i,C_1)$ is $|U|=\frac{|\W(H_i,C_1)|}{|Z_{\W(H_i,C_1)}(w)|}$, for any $w\in U$.
\end{proof}
\section{Proof of theorem \ref{thm:main} for number fields}\label{sec:sieving in groups}
In this section we prove the following theorem which is the special case of Theorem \ref{thm:main} under the assumption that $\dsF=k$ is a number fields. We recall that a subset $\Sigma$ in a group $\Gamma$ is called admissible if $\Sigma=\Sigma^{-1}$ and satisfying that $Cay(\Gamma,\Sigma)$ is not bipartite. This can be achieved if the elements of $\Sigma$ satisfy an odd relation. In general this is true if $\Sigma$ is not contained in the complement of a subgroup of index 2.
\begin{thm}\label{thm:main thm use sieve method}
Let $k$ be a number field, $n\in\dsN$, and $\Sigma\subset\Gln(k)$ a finite admissible subset. Let $\Gamma:=\langle\Sigma\rangle$, and $H:=\overline\Gamma$ with $H^o$ semisimple. For $\gamma\in\Gamma$ let $\Pi(\gamma H^o)$ be the group $\Pi(\gamma H^o,k)$ defined in \S\ref{sec:F-Cartans}. Let $w:\dsN\to\Sigma$ be a random walk of $\Gamma$. Then there exists $c>0$, such that for all $k\in\dsN$
$$
\dsP(w_k\in\left\{\gamma\in\Gamma:\Gal(k(\gamma)/k)\ne\Pi(\gamma H^o)\right\})\ll e^{-ck}
$$
\end{thm}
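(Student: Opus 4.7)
The plan is to implement a large-sieve argument on congruence quotients of $\Gamma$, driven by property $\tau$ and the local analysis developed in Sections \ref{sec:Cartans} and \ref{sec:reduction mod primes}, in the spirit of \cite{JKZ,LM}.

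\emph{Reduction to the split case.} By Lemma \ref{lem:splitng fld}\eqref{prop:functorPi}, surjectivity of $\phi_{C_\gamma}$ onto $\Pi(H_i,C,E)$ over any finite extension $E/k$ automatically forces surjectivity over $k$. I would therefore fix a finite Galois extension $k'/k$ over which every coset $H_i$ of $H^o$ splits, and work inside $k'$. In that setting $\Pi(H_i,k')=\widetilde W(H_i)$, and it suffices, for each $i$, to prove that conditional on $w_k\in H_i$ the probability that $\phi_{C_{w_k}}(\Gal(\overline{k'}/k'))$ fails to exhaust $\widetilde W(H_i)$ is $O(e^{-ck})$. Combined with Proposition \ref{prop:Gal subqtint Pi} and the descent just quoted, this will yield $\Gal(k(w_k)/k)\simeq\Pi(w_kH^o,k)$ outside an exponentially small event.

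\emph{Coset decomposition and the bad set.} Since $\Sigma$ is admissible, its projection to the finite group $H/H^o$ generates a non-bipartite Cayley graph, so the random walk on cosets equidistributes exponentially fast; it is then enough to fix one coset $H_i$ and bound the event $\{w_k\in H_i\}\cap\{\Gal(k(w_k)/k)\neq\Pi(H_i)\}$ for each $i$ separately. By Proposition \ref{prop:Gal subqtint Pi}, failure can occur only in two ways: either (a) $w_k$ is not regular semisimple — a proper Zariski-closed condition cut out by a nonzero polynomial on $H$ — or (b) some conjugacy class $U\in\widetilde W(H_i)^\sharp$ is missed by the image of $\phi_{C_{w_k}}$. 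Case (a) is handled directly by the standard polynomial sieve of \cite[\S 2]{LM}.

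\emph{Sieve for case (b).} Combining Chebotarev with Proposition \ref{prop:reduction mod primes}, the global condition ``$\phi_{C_{w_k}}$ misses $U$'' implies the mod-$\mathfrak p$ condition $\theta_{\Frob_\mathfrak p}(\pi_\mathfrak p(w_k))\notin\alpha(U)$ for every unramified prime $\mathfrak p\notin S'$. Proposition \ref{prop:Conjugacy_density} provides, for each $U$, a uniform positive lower bound $c_U>0$ on the density of $\theta_{\Frob_\mathfrak p}^{-1}(\alpha(U))$ inside $H_i(\dsF_\mathfrak p)$, uniformly in $\mathfrak p$. Since $H^o$ is semisimple, \cite{SGV} supplies property $\tau$ for $\Gamma$ with respect to its congruence subgroups, so the large-sieve machinery of \cite[\S 3]{LM} converts these local densities into
\[
\dsP\bigl(w_k\in H_i,\ \phi_{C_{w_k}}\text{ misses }U\bigr)\ll e^{-c_U k},
\]
and summing over the finitely many pairs $(i,U)$ finishes the argument.

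\emph{Main obstacle.} The principal difficulty is arranging uniformity: the density constant of Proposition \ref{prop:Conjugacy_density} must be independent of $\mathfrak p$ (delivered by Section \ref{sec:reduction mod primes}), and property $\tau$ from \cite{SGV} must be invoked for $\Gamma$ itself rather than for $\Gamma\cap H^o$, so that the random walk equidistributes exponentially on all relevant congruence quotients of $H$. Once these uniformities are installed, the coset-by-coset analysis, the descent from $k'$ to $k$ via Lemma \ref{lem:splitng fld}\eqref{prop:functorPi}, and the functoriality properties of $\Pi$ combine to give Theorem \ref{thm:main thm use sieve method}.
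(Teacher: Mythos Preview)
Your sieve architecture is essentially the paper's, but there is a genuine gap in the reduction step. You claim that once $w_k$ is regular semisimple and $\phi_{C_{w_k}}(\Gal(\overline{k'}/k'))$ equals $\widetilde W(H_i)$, Proposition~\ref{prop:Gal subqtint Pi} forces $\Gal(k(w_k)/k)\simeq\Pi(H_i,k)$. It does not. Proposition~\ref{prop:Gal subqtint Pi} only says that $\Gal(k(w_k)/k)$ is a \emph{quotient} of $\Gal(k_{C_{w_k}}/k)$: the splitting field of $\chi_{w_k}$ is the splitting field of the Zariski closure $D_{w_k}=\overline{\langle w_k\rangle}$, and in general $D_{w_k}\subsetneq C_{w_k}$. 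So even when $\phi_{C_{w_k}}$ surjects onto $\widetilde W(H_i)$, the group $\Gal(k(w_k)/k)$ can be a proper quotient of it --- for instance whenever some component of a power of $w_k$ has finite order.

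The paper plugs this hole with an extra ingredient you did not invoke: the notion of elements \emph{without $H_i$-components of finite order} and the $H_i$--$k$ quasi-irreducibility of Cartan subgroups (Definition~\ref{defn:H_i-k irred}, Lemma~\ref{lem:kq_ired_Cartans}). That lemma says that if $\Gal(k_C/k)\supseteq W(H_i,C)$ \emph{and} $x^{\ord(H_i)}$ has no $H_i$-component of finite order, then $\Gal(k(x)/k)=\Gal(k_C/k)$; this is what promotes surjectivity of $\phi_{C_{w_k}}$ to the desired equality for $\Gal(k(w_k)/k)$. Correspondingly, the bad set must include a third piece --- elements with an $H_i$-component of finite order --- and this is sieved out separately (it lies in a proper subvariety, see \eqref{eq:cmpnnts_finite_order} in Lemma~\ref{lem:sieve4regss}). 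Without this step your argument does not close.
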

We first recall the basic setting from the previous sections. Let $H$ be an algebraic group with $H^o$ semisimple. Let $H_i$ be a coset of $H^o$. For a semisimple regular element $h\in H_i$, let $C_h$ be the unique Cartan subgroup associated to $H_i$ that contains $h$. We have constructed a homomorphism $\phi_h:\Gal(\ck/k)\to\Pi(H_i,C_h)$, with $\ker(\phi_h)$ satisfying $\Gal(k_{C_h}/k)\simeq \Gal(\ck/k)/\ker(\phi_h)$. Furthermore, if $H_i$ splits over $k$, then $\Pi(H_i,C_h,k)=W(H_i,C_h)$ the outer Weyl group associated to $H_i$, and in general $\Pi(H_i,C_h,k)=\langle\phi_h(\Gal(k_{C_h}/k)),W(H_i,C_h)\rangle$.
In order to justify our focus on Cartan subgroups instead of elements we need the following definition generalizing similar notions from \cite{PR}.
\begin{defn}\label{defn:H_i-k irred}
Let $H,H^o,H_i$ and $k$ be as above. Let $H^o=H^1\cdots H^l$ be the decomposition of $H^o$ as an almost produck of $k$-simple subgroups, and $H^o=H^{(1,i)}\cdots H^{(r,i)}$ be the minimal decomposition of $H^o$ into normal subgroups of $H^o$ invariant under conjugation by $H_i$ (see Appendix \ref{app:k-q Cartans}). We say that
\begin{enumerate}
\item
A $k$-Cartan subgroup $C$ associated to $H_i$ is $H_i-k$ \textit{quasi irreducible} if $C^o$ has no $k$-subtori other than $C^o\cap H^{(j,i)},j=1,\dots,r$.
\item
An element $x\in H^o$ is {\it{without $H_i$-components of finite order}} if for some (equivalently, any) decomposition $x=x_1\cdots x_r$ with $x_i\in H^{(s,i)},s=1,\dots,r$, all the $x_i$'s have infinite order.
\end{enumerate}
\end{defn}
In Appendix \ref{app:k-q Cartans} we prove (see Lemma \ref{lem:k-q_ired_Cartans} and Corollary \ref{cor:k-q_ired_elements}) the following generalization of a result in \cite{PR}:
\begin{lem}\label{lem:kq_ired_Cartans}
Let $H$ be a linear algebraic group defined over a number field $k$, such that $H^o$ is semisimple. Fix a coset $H_i$ of $H^o$, and let $H=H^{(1,i)}\cdots H^{(r,i)}$ as above. Let $C$ be a $k$-Cartan subgroup associated to $H_i$ such that $\Gal(k_C/k)$ contains the Weyl group $W(H_i,C)$, then $C^o$ is $H_i-k$ quasi irreducible. Furthermore, if $x\in C\cap H_i(k)$ is such that $x^{\ord(H_i)}$ is without $H_i$-components of finite order (where $\ord(H_i)$ is the order of $H_i$ in $H/H^o$), then $\Gal(k(x)/k)=\Gal(k_C/k)$.
\end{lem}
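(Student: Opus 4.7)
My plan is to prove the two assertions in sequence, with the first feeding the second. Throughout I identify $\Gal(k_C/k)\simeq\phi_C(\Gal(\overline{k}/k))\subseteq\mathrm{Aut}(X(C))$; the hypothesis says this image contains $\widetilde W(H_i,C)$. For the quasi-irreducibility, a $k$-subtorus $T\subseteq C^o$ corresponds to a $\Gal(\overline k/k)$-invariant sublattice of $X(C^o)$, which is in particular $\widetilde W(H_i,C)$-invariant. I shall invoke the appendix (extending \cite{PR} to the coset setting) which classifies the $\widetilde W(H_i,C)$-invariant sublattices of $X(C^o)$ as precisely those cutting out subtori of the form $\prod_{j\in J}(C^o\cap H^{(j,i)})$ for $J\subseteq\{1,\dots,r\}$, yielding the first assertion.

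For the equality of Galois groups, since $k(x)\subseteq k_C$ it suffices to show that the natural surjection $\Gal(k_C/k)\twoheadrightarrow\Gal(k(x)/k)$ has trivial kernel. A short computation using $x\in H_i(k)$ gives: $\sigma$ fixes $k(x)$ pointwise iff $(\phi_C(\sigma)\chi)(x)=\chi(x)$ for every $\chi\in X(C)$, equivalently the dual action $\tau^*$ of $\tau:=\phi_C(\sigma)$ on $C$ fixes $x$. I will fix a splitting $C=C^o\times F$ over $\overline k$, with $F=C/C^o$ finite cyclic of order $\ord(H_i)$, and write $x=x^o u$. Since $|F|=\ord(H_i)$, one has $x^{\ord(H_i)}=(x^o)^{\ord(H_i)}$, whose projection to each factor $C^o\cap H^{(j,i)}$ has infinite order by assumption. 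By the quasi-irreducibility just established, the identity component of the Zariski closure of $\langle x^o\rangle$ is a subtorus $\prod_{j\in J}(C^o\cap H^{(j,i)})$; the infinite-order condition forces $J=\{1,\dots,r\}$, so $\langle x^o\rangle$ is Zariski-dense in $C^o$.

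Suppose $\tau^*(x)=x$; uniqueness of the decomposition in $C^o\times F$ gives $\tau^*(x^o)=x^o$ and $\tau^*(u)=u$. Density of $\langle x^o\rangle$ in $C^o$ upgrades the first to $\tau^*|_{C^o}=\mathrm{id}$, hence $\tau|_{X(C^o)}=\mathrm{id}$. Since $C$ is associated to $H_i$ and $x\in H_i$, the image of $x$ in $C/C^o$ agrees with that of a generator $g\in H_i$ of $C/C^o$ under the injection $C/C^o\hookrightarrow H/H^o$; thus $u$ generates $F$, and $\tau^*(u)=u$ yields $\tau|_{X(F)}=\mathrm{id}$. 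Because $X(F)\subseteq X(C)$ is the torsion subgroup (canonically preserved by $\tau$) with quotient $X(C^o)$, trivial action on both summands forces $\tau=\mathrm{id}$, i.e.\ $\sigma\in\ker\phi_C$, as required. The main obstacle will be Part~1: the classification of $\widetilde W(H_i,C)$-invariant sublattices of $X(C^o)$ in the coset (non-connected) setting, which is what the appendix's generalization of \cite{PR} must provide. Once that is in hand, Part~2 reduces to the Zariski-density argument above combined with the canonical free/torsion splitting of $X(C)$.
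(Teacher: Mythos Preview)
Your Part~1 has a genuine gap: the reduction ``Galois-invariant $\Rightarrow$ $\widetilde W(H_i,C)$-invariant, then classify the latter'' throws away information that the paper's proof actually uses. The appendix does \emph{not} classify $\widetilde W(H_i,C)$-invariant sublattices of $X(C^o)$ as products of the $X(C^o\cap H^{(j,i)})$; in general there are more. Take $H=H^o=\mathrm{Res}^K_{\dsQ}(SL_n)$ with $[K:\dsQ]=2$: over $\overline{\dsQ}$ one has $SL_n\times SL_n$ with Weyl group $S_n\times S_n$, and each summand of $X(C^o)\otimes\dsQ$ is $W$-invariant but does not correspond to a $k$-subtorus (Galois swaps them). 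The paper's argument works in $X(T_C)\otimes\dsQ$ for the ambient maximal torus $T_C=Z_{H^o}(C^o)$ and uses three ingredients simultaneously: irreducibility of $W^\tau$ on each irreducible component of the projected root system $p(R)$; Galois transitivity on the absolutely simple factors within each $k$-simple $H^j$; and the $H_i$-conjugation permuting the $H^j$ inside each $H^{(j,i)}$. Your outline invokes only the first.

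Your Part~2 strategy (Zariski density of $\langle x\rangle$ in $C$) matches the paper's, and is in fact more detailed than the paper's one-line Corollary; but several steps need repair. The closure $\overline{\langle x^o\rangle}$ need not be defined over $k$ (the splitting $C=C^o\times F$ is only over $\overline k$), so $k$-quasi-irreducibility does not apply to it---work instead with a power $x^N\in C^o(k)$. The automorphism $\tau^*$ preserves $C^o$ but need not preserve your chosen complement $F$, so you cannot split $\tau^*(x)=x$ into $\tau^*(x^o)=x^o$ and $\tau^*(u)=u$. The claimed injection $C/C^o\hookrightarrow H/H^o$ is unjustified (it holds when $H^o$ is simply connected via $Z_{H^o}(C)=C^o$, but not in general); the paper instead uses regularity of $x$ to get $C=\langle C^o,x\rangle$ directly. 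Finally, ``trivial on the torsion subgroup and on the free quotient forces $\tau=\mathrm{id}$'' is false (e.g.\ $(a,b)\mapsto(a,b+\bar a)$ on $\dsZ\oplus\dsZ/2$); the correct finish is: once $\tau^*|_{C^o}=\mathrm{id}$ and $\tau^*(x)=x$ with $C=\langle C^o,x\rangle$, every element $cx^n$ of $C$ is fixed by $\tau^*$.
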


Theorem \ref{thm:main thm use sieve method} follows from the following two lemmas
\begin{lem}\label{lem:sieve4regss}
Let $\Sigma,\Gamma,H,H^o,H_i,w:\dsN\to\Sigma$ be as above, and assume $H^o$ is semisimple. Then there exists $c_1>0$ such that for all $k\in\dsN$
\begin{equation}\label{eq:sieve4regss}
\dsP(w_k\in\left\{\gamma\in\Gamma:\gamma\text{ is not regular semisimple }\right\})\ll e^{-c_1k}
\end{equation}
and
\begin{equation}\label{eq:cmpnnts_finite_order}
\dsP(w_k\in\left\{\gamma\in\Gamma\cap H_i:\gamma\text{ has an }H_i\text{-component of finite order}\right\})\ll e^{-c_1k}
\end{equation}
\end{lem}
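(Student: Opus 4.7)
Both estimates are applications of the linear-group sieve for random walks, following the template of \cite{LM} (itself building on \cite{JKZ}). The two key inputs are (a) property $\tau$ for $\Gamma$ with respect to the family of congruence subgroups modulo primes $\ideal p$ of $\mathcal O_k$, which is supplied in the required generality by \cite{SGV} and provides exponentially fast equidistribution of $w_k$ on each congruence quotient $\Gamma/\Gamma(\ideal p)$; and (b) algebraic--geometric bounds showing that the ``bad'' set has small density modulo $\ideal p$ for almost all $\ideal p$.

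For \eqref{eq:sieve4regss}, let $Y\subset H_i$ denote the complement of the non-empty open subvariety $V$ from Proposition \ref{prop:reduction mod primes}(1); then $Y$ is a proper Zariski-closed $k$-subvariety of $H_i$ containing the non-regular-semisimple locus, and that proposition shows that outside a finite set of primes $S'$, $\gamma\in Y(\mathcal O_{k,S})$ forces $\pi_{\ideal p}(\gamma)\in Y(\dsF_{\ideal p})$ for every $\ideal p\not\in S'$. By Lang--Weil, $|Y(\dsF_{\ideal p})|/|H(\dsF_{\ideal p})|\ll |\dsF_{\ideal p}|^{-1}$. Feeding this density bound together with the spectral-gap input into the combinatorial sieve yields an exponential estimate $\dsP(w_k\in Y(\mathcal O_{k,S}))\ll e^{-c_1 k}$, which in particular controls the non-regular-semisimple set.

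For \eqref{eq:cmpnnts_finite_order}, decompose $\gamma^{\ord(H_i)}=\gamma_1\cdots\gamma_r$ with $\gamma_j\in H^{(j,i)}$. For each $j$ and each positive integer $n$ the set $T_{j,n}=\{x\in H^{(j,i)}:x^n=e\}$ is a proper Zariski-closed subvariety (in fact a finite union of torsion cosets of bounded cardinality $O(n^{C})$), and for any $\ideal p$ coprime to $n$ an element of order $n$ in characteristic zero reduces modulo $\ideal p$ to one of order $n$, hence lies in $T_{j,n}(\dsF_{\ideal p})$. The sieve therefore controls the order-$n$ stratum by the same argument as in \eqref{eq:sieve4regss}. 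To handle all finite orders simultaneously, one either (i) balances a growing cutoff $n=n(k)$ against a prime range $|\ideal p|\asymp e^{\alpha k}$ and the walk length $k$ in a single sieve, in the style of \cite{LM}, or (ii) exploits that torsion elements of $H^{(j,i)}(\overline k)$ have bounded height (their eigenvalues are roots of unity) and combines this with an escape-type bound for the Zariski-dense walk.

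\textbf{Main obstacle.} The essential analytic input --- property $\tau$ uniformly across the congruence tower for the specific $\Gamma$ at hand --- is exactly the contribution of \cite{SGV}, and once it is in place the first estimate \eqref{eq:sieve4regss} is a routine sieve application. The genuine delicacy lies in \eqref{eq:cmpnnts_finite_order}: the bad locus is a \emph{countable} union of proper subvarieties, so a naive union bound is hopeless, and the quantitative balancing of the order cutoff $n$, the prime size $|\ideal p|$, and the walk length $k$ (or, alternatively, the escape-from-low-height argument) is the step that requires real care.
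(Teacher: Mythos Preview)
Your treatment of \eqref{eq:sieve4regss} matches the paper's: the non-regular-semisimple locus is contained in a proper closed $k$-subvariety of $H_i$ (Proposition~\ref{prop:most lmnts rs}), and one invokes the subvariety sieve of \cite{LM} (quoted in the paper as Proposition~\ref{prop:sieve subvar}), which packages exactly your Lang--Weil and spectral-gap inputs.

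For \eqref{eq:cmpnnts_finite_order}, however, you have manufactured an obstacle that is not present. The bad set is \emph{not} a genuine countable union requiring a cutoff-versus-prime balancing act. The point is that $k$ is a fixed number field: any torsion element of $GL_{n'}(k)$ has eigenvalues that are roots of unity of degree at most $n'[k:\dsQ]$ over $\dsQ$, hence of order dividing one fixed integer $m$. The paper uses this after passing through the $k$-rational projections $\pi_{(j,i)}:H\to H/\widehat{H^o_{i,j}}$ it introduces at the start of the proof: if $\gamma\in H_i(k)$ has its $j$-th $H_i$-component of finite order, then $\pi_{(j,i)}(\gamma)$ is a $k$-rational element of finite order in a fixed linear group, hence satisfies $\pi_{(j,i)}(\gamma)^{m\cdot\ord(H_i)}=e$. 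Thus for each $j$ the bad locus lies in a \emph{single} proper $k$-subvariety of $H$, and \eqref{eq:cmpnnts_finite_order} follows from the same black-box Proposition~\ref{prop:sieve subvar} as \eqref{eq:sieve4regss}, with no further work.

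Your option (ii) is gesturing in the right direction --- eigenvalues of torsion elements are roots of unity --- but the relevant consequence is not a height bound over $\bar k$ combined with an unspecified escape argument; it is simply that torsion in $GL_{n'}$ over a fixed number field has uniformly bounded order. Your option (i), the quantitative balancing of a growing order cutoff against the sieve parameters, is a nontrivial piece of analysis that the paper avoids entirely.
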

\begin{lem}\label{lem:sieve4conjcls}
Let $\Sigma,\Gamma,H,H^o,H_i,w:\dsN\to\Sigma$ be as above, and assume $H^o$ is semisimple, and $H_i$ splits over $k$. For $U\in W(H_i)^\sharp$ denote
$$
Z_1=\left\{\gamma\in\Gamma:\gamma\text{ is semisimple regular, and }\phi_\gamma(\Gal(\ck/k))\cap U=\emptyset\right\}.
$$
 There exists $c_2>0$ such that for all $n\in\dsN$
\begin{equation}\label{eq:sieve4conjcls}
\dsP(w_k\in Z_1)\ll e^{-c_2n}
\end{equation}
\end{lem}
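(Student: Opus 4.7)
The plan is to apply the sieve method for linear groups in the form of \cite{LM}, using Proposition \ref{prop:reduction mod primes} to translate the condition defining $Z_1$ into a congruence condition and Proposition \ref{prop:Conjugacy_density} to provide the positive-density input, while invoking \cite{SGV} to supply the spectral gap needed for the sieve.

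First, I would pass from the Galois-theoretic hypothesis to a condition on reductions modulo primes. Fix $\gamma \in Z_1$. For any unramified prime $\ideal p \notin S'$ (with $S'$ as in Proposition \ref{prop:reduction mod primes}) such that $\pi_{\ideal p}(\gamma)$ remains regular semisimple, the class $\theta_{\Frob_{\ideal p}}(\gamma) = [\phi_\gamma(\Frob_{\ideal p})]$ lies in $\phi_\gamma(\Gal(\ck/k))$, which is disjoint from $U$ by assumption. The commutative diagram of Proposition \ref{prop:reduction mod primes}(2) therefore forces $\theta_{\Frob_{\ideal p}}(\pi_{\ideal p}(\gamma)) \ne \alpha(U)$. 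Consequently $\pi_{\ideal p}(\gamma)$ belongs to the ``bad set''
$$
B_{\ideal p} := \bigl(H_i(\dsF_{\ideal p}) \setminus (H_i(\dsF_{\ideal p}))_{sr}\bigr) \cup \bigl\{ \bar h \in (H_i(\dsF_{\ideal p}))_{sr} : \theta_{\Frob_{\ideal p}}(\bar h) \ne \alpha(U)\bigr\}.
$$
By Proposition \ref{prop:Conjugacy_density} applied to $\alpha(U)$ in $\W(H_{i,\overline{\dsF_{\ideal p}}})^\sharp$ (the splitting hypothesis over $\dsF_{\ideal p}$ being supplied by Remark \ref{rk:H_i splits over all flds}), together with the elementary fact that the non--regular-semisimple locus has density $O(N(\ideal p)^{-1})$, there exists an absolute $c_0 > 0$ and an infinite set of primes $\mathcal{P}$ with $|B_{\ideal p}|/|H_i(\dsF_{\ideal p})| \le 1 - c_0$ for every $\ideal p \in \mathcal{P}$.

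Second, I would set up the sieve using super-strong approximation. By \cite{SGV}, the Cayley graphs $\mathrm{Cay}(\pi_{\mathfrak m}(\Gamma), \pi_{\mathfrak m}(\Sigma))$ form a uniform expander family as $\mathfrak m$ ranges over squarefree products of primes in $\mathcal{P}$ (the admissibility of $\Sigma$ preventing any bipartite obstruction), and strong approximation on $H^o$ yields $\pi_{\mathfrak m}(\Gamma \cap H^o) \simeq \prod_{\ideal p \mid \mathfrak m} \pi_{\ideal p}(\Gamma \cap H^o)$ for almost all squarefree $\mathfrak m$; the coset decomposition of $\Gamma$ modulo $\Gamma \cap H^o$ descends compatibly. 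This gives equidistribution of $w_n$ on $\pi_{\mathfrak m}(\Gamma)$ at rate $e^{-\eta n}$ for some fixed $\eta > 0$, uniformly in $\mathfrak m$ of polynomial size in $e^{\eta n/2}$. Applying the large sieve inequality (e.g.\ in the form of \cite[Thm.~4.1]{LM}) with $\mathcal{P}_X := \{\ideal p \in \mathcal{P} : N(\ideal p) \le X\}$ for $X = e^{\alpha n}$ with $\alpha > 0$ sufficiently small, the density bound on $B_{\ideal p}$ combines with the spectral gap to produce
$$
\dsP(w_n \in Z_1) \;\ll\; \bigl(1 - c_0\bigr)^{|\mathcal{P}_X|} + |\mathcal{P}_X|^2 e^{-\eta n} \;\ll\; e^{-c_2 n}
$$
for an appropriate $c_2 > 0$, as required.

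The main obstacle is the bookkeeping in the non-connected setting: one must verify that expansion and strong approximation, which are supplied by \cite{SGV} for the semisimple group $H^o$, transfer correctly to the coset $H_i$ under reduction modulo $\ideal p$, and that the sieve inequality of \cite{LM}---originally formulated for connected ambient groups---remains valid when the ``bad'' conjugacy condition $\theta_{\Frob_{\ideal p}}(\bar h) \ne \alpha(U)$ is localized to a single coset. These points are essentially technical: the coset $H_i$ is defined over $\dsF_{\ideal p}$ for almost all $\ideal p$, the bijection $\alpha$ of Proposition \ref{prop:reduction mod primes} is stable, and both the density lower bound and the equidistribution statement can be read off coset-by-coset from the connected case.
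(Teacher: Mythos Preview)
Your outline is correct and follows the same route as the paper: reduce $Z_1$ to a congruence condition via Proposition~\ref{prop:reduction mod primes}, get a uniform positive density of ``good'' elements mod $\ideal p$ from the conjugacy-class count of \S\ref{sec:reduction mod primes}, and feed this into the group sieve of \cite{LM} with the spectral gap from \cite{SGV}. The paper packages this via Theorem~\ref{thm:sieve method} (which is \cite[Cor.~3.3]{LM}) rather than writing the large-sieve inequality explicitly, but the mechanism is the same.

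One point you flag as ``bookkeeping'' is in fact where the paper invests its effort, and it is not quite as automatic as you suggest. Proposition~\ref{prop:Conjugacy_density} gives the density bound on all of $H_i(\dsF_{\ideal p})$, but the sieve condition~\eqref{eq:sieve density condition} must be checked inside $\pi_{\ideal p}(\Lambda)$, and $\pi_{\ideal p}(\Gamma\cap H^o)$ need not surject onto $H^o(\dsF_{\ideal p})$. The paper handles this by passing to the simply connected cover: Proposition~\ref{prop:primes for sieve} produces (via Chebotarev) a positive-density set of primes $\mathcal P$ for which $\pi_{\ideal p}(\Gamma\cap H^o)\subset\psi(\widetilde{H^o}(\dsF_{\ideal p}))$, sets $\Lambda=\psi(\psi^{-1}(\Gamma\cap H^o)\cap\widetilde{H^o})$, and then strong approximation identifies $\Lambda/N_{\ideal p}$ with $\widetilde{H^o}(\dsF_{\ideal p})$. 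The verification of condition~(iv) then reruns the counting of Proposition~\ref{prop:Conjugacy_density} inside $\psi(\widetilde{H^o}(\dsF_{\ideal p}))$ rather than in $H^o(\dsF_{\ideal p})$, using Lemmas~\ref{lem:fin fld conj corsp} and~\ref{lem:Weyl centralizers} together with the fact that $\psi(\widetilde{H^o}(\dsF_{\ideal p}))$ has bounded index in $H^o(\dsF_{\ideal p})$. So the density input is not a black-box citation of Proposition~\ref{prop:Conjugacy_density}; it is a parallel computation tailored to the image of the simply connected cover.
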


We first conclude the proof of the Theorem \ref{thm:main thm use sieve method} assuming the above Lemmas, by showing that the complement of the set
$$
\left\{\gamma\in\Gamma:\Gal(k(\gamma)/k)=\Pi(\gamma H^o)\right\}
$$
is contained in a finite union of exponentially small sets. By Lemma \ref{lem:sieve4regss}, we may assume that if $\gamma\in H_i$ then $\gamma$ is regular semisimple without $H_i$-components of finite order. In particular, for such $\gamma\in\Gamma\cap H_i$, $C_\gamma:=\langle\left(Z_{H^o}(\gamma)\right)^o,\gamma\rangle$ is the unique $k$-Cartan subgroup associated to $H_i$ containing $\gamma$, and by Lemma \ref{lem:kq_ired_Cartans}, that $\Gal(k_{C_\gamma}/k)=\Gal(k(\gamma)/k)$. We therefore concentrate our attention from now on to elements in Cartan subgroups, and Galois groups of splitting fields of them. We want to show that for any coset $H_i$, most elements $C$ of $\mathcal C_i(k)$ satisfy $\Gal(k_C/k)=\Pi(H_i,k)$. By Lemma \ref{lem:splitng fld}\eqref{prop:functorPi} $\Gal(k_C/k)=\Pi(H_i,k)$ if there exists a finite extension $K$ of $k$, such that $\Gal(K_C/K)=\Pi(H_i,K)$. We may therefore assume, by replacing the field $k$ with a finite extension, that all cosets of $H^o$ split over $k$. In particular, we have that $\Pi(H_i,k)=\W(H_i)$.
Using a well known theorem of Jordan, stating that for any finite group, a proper subgroup must have an empty intersection with at least one conjugacy class, we have that if $C\in\mathcal C_i(k)$ satisfies that the image of $\Gal(k_C/k)\ne \W(H_i,C)$, then there exists a conjugacy class $U\in \W(H_i)^\sharp$, such that for any $\sigma\in \Gal(\ck/k)$, $\theta_\sigma(C)\ne U$. We therefore have the following inclusion:
\begin{multline}\label{eq:galois as sieve1}
\left\{\gamma\in\Gamma:\Gal(k(\gamma)/k)\not\simeq \Pi(\gamma H^o)\right\}\subset\\
\bigcup_{i=1}^m \left\{\gamma\in\Gamma\cap H_i:\gamma\not\in\left(H_i(k)\right)_{rs}\text{ or has an }H_i\text{-component of finite order}\right\}\bigcup\\
\bigcup_{U\in W(H_i)^\sharp}\!\!\!\!\!\left\{\gamma\in\Gamma\cap\left(H_i(k)\right)_{rs}:\substack{\gamma\text{ is without }H_i\text{-components of finite order, }\\\phi_\gamma(\Gal(\ck/k))\cap U=\emptyset}\right\}
\end{multline}
and since this is a finite union, it suffice to show that each set is exponentially small, which is the content of Lemma \ref{lem:sieve4conjcls} and Lemma \ref{lem:sieve4regss}, and thus the Theorem is proved.
\subsection{Sieve method}
For both Lemmas the following theorem of Salehi Golsefidi-Varju \cite{SGV} is essential.
 \begin{thm}\label{thm:SGV}
 Let $\Gamma\subset\Gln(\mathds Z[\frac1{q_0}])$ be the group generated by a symmetric set $S$. Then $\mathcal G(\pi_q(\Gamma),\pi_q(S))$ form a family of expanders when $q$ ranges over square free integers coprime to $q_0$ if and only if the connected component of the Zariski closure of $\Gamma$ is perfect.
 \end{thm}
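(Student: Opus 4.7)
The plan is to treat the two directions separately. The ``only if'' direction is elementary: expansion is preserved under passing to quotients, so if $H^o := \overline{\Gamma}^o$ is not perfect then $H^o/[H^o,H^o]$ is a nontrivial commutative algebraic group, which reduces modulo infinitely many primes $q$ to yield abelian quotients of $\pi_q(\Gamma)$ of growing size. Abelian Cayley graphs on a bounded generating set cannot form an expander family: one exhibits sets of size roughly half the group with small edge boundary (using the structure of finite abelian groups as products of cyclic factors), forcing the spectral gap to zero.

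For the ``if'' direction, which is the substantive content, I would implement the Bourgain--Gamburd machinery and then adapt it to square-free moduli. First reduce to the case where $H^o$ is semisimple, by passing to a finite-index subgroup of $\Gamma$ and killing the solvable radical, which perfectness of $H^o$ eventually forces to be trivial. Then, for each prime $p$ coprime to $q_0$, establish the spectral gap for $\mathcal{G}(\pi_p(\Gamma),\pi_p(S))$ via three ingredients: (i) \emph{quasirandomness}, i.e.\ Landazuri--Seitz-type lower bounds on the minimal dimensions of nontrivial representations, available because $\pi_p(\Gamma)$ is (essentially) a product of finite simple groups of Lie type of bounded rank; (ii) a \emph{product theorem} (Helfgott, Pyber--Szab\'o, Breuillard--Green--Tao) asserting $|A \cdot A \cdot A| \geq |A|^{1+\delta}$ for every generating subset $A \subset \pi_p(\Gamma)$ that is not too large; and (iii) a \emph{non-concentration estimate} showing that iterated convolutions of the $S$-random walk do not concentrate in preimages of proper algebraic subgroups mod $p$, obtained via Furstenberg-type random matrix products, using Zariski density of $\Gamma$ in the perfect group $H^o$.

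The crucial new step, for square-free $q = p_1 \cdots p_s$, is to combine these pillars uniformly across the prime factors. By the Chinese Remainder Theorem, $\pi_q(\Gamma)$ embeds into $\prod_i \pi_{p_i}(\Gamma)$, but Goursat-type phenomena allow the image to be a proper diagonal subgroup; the product theorem and the non-concentration estimate must therefore be upgraded to detect every proper subgroup of this ambient product---including those identifying factors across different primes. I would prove a multiscale product theorem by induction on the number of prime divisors of $q$, exploiting that failure of growth at level $q$ must already manifest at some proper divisor, and a simultaneous non-concentration estimate via an escape-from-subvarieties argument strong enough to rule out concentration into the whole family of intermediate subgroups parametrized by partitions of the $p_i$'s. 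Feeding these into the Bourgain--Gamburd $L^2$-flattening lemma then yields the desired uniform spectral gap for all square-free $q$ coprime to $q_0$.

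The main obstacle is precisely the simultaneous non-concentration step: one must control an unbounded family of Goursat-type intermediate subgroups at once, with bounds uniform in $q$, which is the key additive-combinatorial novelty of \cite{SGV} relative to earlier prime-modulus treatments by Bourgain--Gamburd, Breuillard--Green--Tao, and Varj\'u. Once this hurdle is cleared, the three classical pillars combine as in the standard Bourgain--Gamburd framework to deliver the expander property.
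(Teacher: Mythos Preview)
Your proposal is a reasonable high-level sketch of the Bourgain--Gamburd strategy as extended by Salehi Golsefidy--Varj\'u, but there is nothing to compare it against: the paper does not prove this theorem. Theorem~\ref{thm:SGV} is quoted verbatim as a result of \cite{SGV} and used as a black box (specifically, to verify condition~\eqref{thm:sieve method tau} of Theorem~\ref{thm:sieve method}); the paper's own contribution lies elsewhere, in the structure theory of Cartan subgroups for non-connected groups and in feeding the resulting density estimates into the group sieve.

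So the relevant feedback is not about mathematical correctness but about scope: you have attempted to reprove a cited external theorem rather than a result of the present paper. If your goal is to understand the present paper, you should treat Theorem~\ref{thm:SGV} as input and focus on how it is invoked. If instead you genuinely want to outline a proof of \cite{SGV}, your sketch captures the architecture correctly, though the phrase ``main obstacle'' understates matters: the multiscale non-concentration and product-growth steps for square-free moduli are the entire substance of that paper, and each requires considerably more than what you have written to be made rigorous.
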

\subsubsection{Proof of Lemma \ref{lem:sieve4regss}}
Assume, without loss of generality, that $H=\langle H^o,H_i\rangle$, and let $\pi_{(j,i)}:H\to H/\widehat{H^{o}_{i,j}}$, where $$
\widehat{H^o_{i,j}}:=H^{(1,i)}\cdots H^{(j-1,i)}H^{(j+1,i)}\cdots H^{(r,i)},
 $$
(that is the product with $H^{(j,i)}$ dropped) be the reduction map from $H$ onto the adjoint representation of $H^{(j,i)}$. Notice that if $x=x_1\cdots x_r\in H^o$ is such that $x_j$ is of finite order, then $\pi_{(j,i)}(x)$ has finite order. The proof will follow from Proposition \ref{prop:reduction mod primes} and the following Proposition 2.7 from \cite{LM}:
\begin{prop}\label{prop:sieve subvar}
Let $\Gamma$ be a finitely generated subgroup of $\Gln(\dsQ)$, such that the identity component of the Zariski closure of $H=\overline\Gamma$ is perfect. Let $V$ be a proper subvariety of $H$, defined over $\dsQ$. Then the set $V(\dsC)\cap\Gamma$ is exponentially small.
\end{prop}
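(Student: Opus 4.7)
The plan is a sieve argument combining the Salehi-Golsefidi-Varj\'u expansion theorem (Theorem \ref{thm:SGV}) with a Lang-Weil style point count for $V$ in finite fields. Since $\Gamma$ is finitely generated, choose $q_0\in\dsN$ with $\Gamma\subseteq\Gln(\dsZ[1/q_0])$; for each prime $p\nmid q_0$ one then has a reduction $\pi_p\colon\Gamma\to H(\dsF_p)$ with image $\Gamma_p$, and any $\gamma\in V(\dsC)\cap\Gamma$ satisfies $\pi_p(\gamma)\in V(\dsF_p)$. The strategy is to select a single prime $p$ whose size grows exponentially with the step count $k$ and to estimate $\dsP\bigl(\pi_p(w_k)\in V(\dsF_p)\bigr)$, which dominates $\dsP(w_k\in V)$.

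By Theorem \ref{thm:SGV} together with the hypothesis that $H^o$ is perfect, the Cayley graphs $\mathcal G(\Gamma_p,\pi_p(\Sigma))$ form an expander family as $p$ ranges over primes coprime to $q_0$, so there exists $\lambda\in(0,1)$, independent of $p$, with the $\ell^2$-mixing bound $\|\mu_k-U_p\|_{\ell^2(\Gamma_p)}\leq\lambda^k$, where $\mu_k$ denotes the distribution of $\pi_p(w_k)$ and $U_p$ is uniform on $\Gamma_p$. Cauchy-Schwarz then yields, for any subset $A\subseteq\Gamma_p$,
\[
\dsP\bigl(\pi_p(w_k)\in A\bigr)\;\leq\;\frac{|A|}{|\Gamma_p|}+\lambda^{k}|A|^{1/2}.
\]

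To apply this with $A=V(\dsF_p)\cap\Gamma_p$, I need size estimates on both sides. Nori's strong approximation theorem (valid since $\Gamma$ is Zariski-dense in $H$ and $H^o$ is perfect) gives $|\Gamma_p|\gg p^{\dim H}$ for all but finitely many $p$, while an elementary Bezout/Lang-Weil bound produces $|V(\dsF_p)|\ll p^{\dim V}\leq p^{\dim H-1}$, as $V$ is a proper subvariety of $H$. Substituting,
\[
\dsP(w_k\in V)\;\leq\;\dsP\bigl(\pi_p(w_k)\in V(\dsF_p)\bigr)\;\leq\;\frac{C_1}{p}+C_2\,\lambda^{k}p^{(\dim H-1)/2}.
\]
Choosing a prime $p$ coprime to $q_0$ in $[P,2P]$ with $P=\lambda^{-2k/(\dim H+1)}$ (existence by the prime number theorem) balances the two terms and yields $\dsP(w_k\in V)\ll e^{-\delta k}$ for some $\delta>0$.

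The main technical hurdle is the lower bound $|\Gamma_p|\gg p^{\dim H}$, which requires a strong approximation statement for the possibly-thin subgroup $\Gamma$; this is precisely where the perfectness of $H^o$ is needed, the very same hypothesis that powers the expansion in Theorem \ref{thm:SGV}. Without it, neither the spectral gap nor the quantitative strong approximation is available, and the proposition can genuinely fail.
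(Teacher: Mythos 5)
The paper does not prove this statement itself—it is cited as Proposition 2.7 of \cite{LM}—but your argument is a correct reconstruction of the proof given there: mixing at a single well-chosen prime $p$ growing exponentially with $k$, the $\ell^2$ bound supplied by the expander family of Theorem \ref{thm:SGV} (with admissibility of $\Sigma$ furnishing the uniform non-bipartiteness needed to control the bottom of the spectrum), strong approximation to obtain $|\Gamma_p|\gg p^{\dim H}$, and a Lang--Weil/Bezout count $|V(\dsF_p)|\ll p^{\dim H-1}$, balanced at $p\approx\lambda^{-2k/(\dim H+1)}$. Your closing observation that the perfectness of $H^o$ is doing double duty, powering both the spectral gap and the quantitative strong approximation, is exactly right.
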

By Proposition \ref{prop:most lmnts rs}, the set of regular semisimple elements in $H_i$ contain an open dense subset, defined over $k$, and therefore \eqref{eq:sieve4regss} is a consequence of Proposition \ref{prop:sieve subvar}. Notice, also, that for a fixed field $k$, there exists $m\in\dsN$, such that if $h\in H(k)$ with eigenvalue that is a root of unity, then the order of the root divides $m$. Therefore all regular semisimple elements of finite order in $H_i$ are inside the variety defined by $x^{mn}=1$, where $n$ is the order of $H_i$ in $H/H^o$, and thus \eqref{eq:cmpnnts_finite_order} also follows from Proposition \ref{prop:sieve subvar}.
\subsubsection{Proof of Lemma \ref{lem:sieve4conjcls}}
Lemma \ref{lem:sieve4conjcls} is a consequence of the sieve method for groups, and in particular the following Theorem:
\begin{thm}[\cite{LM} Corollary 3.3]\label{thm:sieve method}
Fix $s\geq2$. Let $\Gamma$ be a finitely generated group, and let $\Sigma\subset\Gamma$ an admissible subset of it. Let $\Lambda<\Gamma$ be a subgroup of $\Gamma$ of finite index, and let $(N_i)_{i\geq s}$ be a sequence normal subgroups of $\Gamma$ of
finite index which are contained in $\Lambda$. Let $Z\subset\Gamma$ and assume the following
\begin{enumerate}
\item \label{thm:sieve method tau} $\Gamma$ has property-$\tau$ w.r.t the series of
normal subgroup $(N_i \cap N_j)_{i,j\geq s}$.
\item \label{thm:sieve method poly} There exists $d$ such that $|\Gamma/N_j| \le j^d$ for every $j\geq s$.
\item \label{thm:sieve method disj} $|\Lambda/\left(N_i\cap N_j\right)|=|\Lambda/N_i||\Lambda/N_j|$ for every $i\ne j\geq s$
\item \label{thm:sieve method dens} There is $c>0$ such that for every coset $\kappa\in\Gamma/\Lambda$ and every $j\geq s$
\begin{equation}\label{eq:sieve density condition}
|\left(Z\cap\kappa\right)N_j/N_j|\leq (1-c)|\Lambda/N_j|.
\end{equation}
\end{enumerate}
Then there exist $\alpha,t>0$ such that
$$
\dsP\left(\left\{w_k\in \Gamma:w_k\in Z\right\}\right)\leq e^{-\alpha k}\quad \forall k\geq tlog s
$$
\end{thm}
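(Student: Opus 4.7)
The plan is to prove this sieve theorem by combining the uniform spectral gap supplied by property-$\tau$ with the density condition (4), via a large-sieve style second-moment argument, and finally summing over the finitely many cosets of $\Lambda$ in $\Gamma$. Let $\mu_k$ denote the law of $w_k$ on $\Gamma$. Because $[\Gamma:\Lambda]$ is finite, it suffices to fix a coset $\kappa$ and bound $\mu_k(Z\cap\kappa)$ by an exponentially small quantity independently of $\kappa$, then sum. Since $\Sigma$ is admissible, the random walk operator is self-adjoint with trivial bipartite component, so its spectral theory can be exploited on each finite quotient.

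The first step is to exploit property-$\tau$ to get quantitative equidistribution. Property-$\tau$ with respect to $\{N_i\cap N_j\}$ yields a single $\rho<1$ (independent of $i,j$) such that the random walk operator on $\ell^2(\Gamma/(N_i\cap N_j))$ has all non-trivial eigenvalues bounded by $\rho$. Pushing forward to a single $N_j$, this gives
$$
\bigl\|\pi_j(\mu_k)-u_{\Gamma/N_j}\bigr\|_2 \;\leq\; \rho^k,
$$
where $u_{\Gamma/N_j}$ is the uniform distribution and $\pi_j:\Gamma\to\Gamma/N_j$ is the projection. By Cauchy--Schwarz, for any subset $Y\subset\Gamma/N_j$,
$$
\bigl|\pi_j(\mu_k)(Y)-|Y|/|\Gamma/N_j|\bigr| \;\leq\; \rho^k\,|Y|^{1/2}.
$$
This is the basic equidistribution estimate; condition (2), $|\Gamma/N_j|\leq j^d$, will control the error when we sum over many $j$.

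The sieve step now proceeds as follows. For a finite index set $J\subset\{s,s+1,\dots\}$ to be chosen, consider the non-negative function
$$
F(\gamma) \;=\; \Bigl(\sum_{j\in J}\bigl(\mathbf{1}_{\pi_j(Z\cap\kappa)}(\pi_j\gamma)-|\pi_j(Z\cap\kappa)|/|\Lambda/N_j|\bigr)\Bigr)^2
$$
defined on $\kappa$. Its expectation against $\mu_k$ expands into diagonal terms $(j=j')$, bounded via Step~1 by $|J|$ up to an error of order $\rho^k\cdot\max_{j\in J}|\Gamma/N_j|$, and off-diagonal terms $(j\neq j')$, which by condition (3) --- ensuring $\pi_i\times\pi_j:\Lambda\to\Lambda/N_i\times\Lambda/N_j$ is surjective --- become an error of the same type. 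On the other hand, whenever $\gamma\in Z\cap\kappa$ we have $\pi_j\gamma\in\pi_j(Z\cap\kappa)$ for every $j$, so by condition (4) the inner sum is at least $c|J|$, and therefore $F(\gamma)\geq c^2|J|^2$ on $Z\cap\kappa$. Dividing,
$$
\mu_k(Z\cap\kappa) \;\leq\; \frac{1}{c^2|J|^2}\Bigl(|J| + C\rho^k\,|J|^2 \max_{j\in J}|\Gamma/N_j|\Bigr) \;\leq\; \frac{1}{c^2|J|}+C'\rho^k\,\max_{j\in J}j^d.
$$

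The final step is parameter balancing. Take $J=\{s,s+1,\dots,M\}$ with $M=\lceil e^{\eta k}\rceil$ for $\eta>0$ small. The first term becomes $O(e^{-\eta k})$, and the second is $O(\rho^k e^{\eta d k})=O(e^{(\eta d+\log\rho)k})$, which is exponentially small once $\eta<|\log\rho|/d$. For this to make sense we need $M\geq s$, i.e.\ $k\geq \eta^{-1}\log s$, which supplies the threshold $k\geq t\log s$. Summing the resulting bound over the finitely many cosets $\kappa$ and taking $\alpha=\min(\eta,|\log\rho|-\eta d)/2$ finishes the proof. The main technical obstacle is the cross-term analysis in Step~3: one has to check that condition (3) really does decouple the projections $\pi_i,\pi_j$ on every coset of $\Lambda$ (not just on $\Lambda$ itself), which requires carefully translating back and forth between the cosets; and then the $\rho^k$ error must be controlled uniformly across all $O(|J|^2)$ pairs in $J$, which is why the bound $|\Gamma/N_j|\leq j^d$ of condition (2) is indispensable.
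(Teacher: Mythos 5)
The paper does not prove this result --- it is cited as a black box (Corollary 3.3 of Lubotzky--Meiri \cite{LM}), so there is no in-paper argument to compare against. That said, your second-moment / large-sieve sketch is, in outline, exactly the mechanism underlying \cite{LM} and its precursors (Kowalski; Bourgain--Gamburd--Sarnak), and the key mechanisms are correctly identified: property-$\tau$ supplies a uniform spectral gap $\rho<1$ on the quotients $\Gamma/(N_i\cap N_j)$; condition (iii) decouples $\pi_i$ and $\pi_j$ coset-by-coset, and this does hold because $\mathbf{1}_\kappa$ is $N_i\cap N_j$-invariant while the image of $\kappa$ in $\Gamma/(N_i\cap N_j)$ already has size $|\Lambda/N_i|\cdot|\Lambda/N_j|$, forcing the restricted map onto $\pi_i(\kappa)\times\pi_j(\kappa)$ to be a bijection; condition (iv) yields the pointwise lower bound $F\geq c^2|J|^2$ on $Z\cap\kappa$; and condition (ii) keeps the accumulated $\rho^k$-errors, of total size $\ll\rho^k|J|^2\max_{j\in J}j^{d}$, exponentially small once $|J|\approx e^{\eta k}$ for suitably small $\eta$. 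Two minor remarks: the diagonal terms are bounded by $|J|$ pointwise and need no spectral input at all; and the threshold $k\geq t\log s$ simply ensures the sieving window $J=\{s,\dots,M\}$ is nonempty, as you correctly observe. The sketch is sound and reproduces the essential structure of the cited result.
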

By Proposition \ref{prop:reduction mod primes} there exists an open non-empty subset $V$ defined over $k$ and a finite set of primes $S$, such that the following set
\begin{equation*}
Z=\left\{h\in\Gamma: h\in V(k)\subset \left(H_i(k)\right)_{rs},\;\theta_{\Frob_\ideal p}(h)\neq U,\forall\ideal p\not\in S\right\}.
\end{equation*}
is contained in the following
\begin{equation*}
 Z_1:=\left\{h\in\Gamma: \pi_{\ideal p}(h)\in V(\dsF_{\ideal p})\subset\left(\pi_{\ideal p}(\Gamma)\right)_{rs},\theta_{\Frob_\ideal p}(\pi_{\ideal p}(h))\neq U,\forall\ideal p\not\in S\right\}
\end{equation*}
and thus it suffice to show that the latter is exponentially small. For this goal we will use Theorem \ref{thm:sieve method}. In order to define the sequence of normal subgroups $\{N_i\}_{i\geq s}$, and the subgroup $\Lambda<\Gamma$ we use the following Proposition
\begin{prop}\label{prop:primes for sieve}
Let $k$ be a number field, $\Delta=\langle g_1,\dots,g_r\rangle,g_i\in \Gln(k)$ a finitely generated group, such that $G:=\overline\Delta$ is connected and semisimple. Let $\psi:\widetilde{G}\to G$ be the simply connected cover of $G$.
Then there exists a finite set $S$ of primes of $\mathcal O_k$,  the ring of integers of $k$, and a set $\mathcal P$ of primes of positive density such that for any $\ideal p\in\mathcal P$
$$\pi_{\ideal p}(\Delta)\subset\psi(\widetilde{G}(\dsF_{\ideal p}))$$
\end{prop}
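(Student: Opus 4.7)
The plan is to interpret the condition $\pi_{\ideal p}(\Delta)\subset\psi(\widetilde{G}(\dsF_{\ideal p}))$ cohomologically. Set $Z:=\ker\psi$, a finite central subgroup scheme of $\widetilde{G}$ (finite because $G$ is semisimple, central because $\psi$ is a central isogeny). The short exact sequence $1\to Z\to\widetilde{G}\xrightarrow{\psi}G\to 1$ produces, for every field $F\supseteq k$, a connecting map $\delta_F\colon G(F)\to H^1(F,Z)$ whose kernel is precisely $\psi(\widetilde{G}(F))$; centrality of $Z$ is what makes $\delta_F$ a group homomorphism. It therefore suffices to arrange that $\delta_{\dsF_{\ideal p}}(\pi_{\ideal p}(g))=0$ for every $g\in\Delta$ and every $\ideal p\in\mathcal P$.

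The crucial observation will be that $\delta_k(\Delta)\subset H^1(k,Z)$ is finite, being a finitely generated subgroup (homomorphic image of the finitely generated group $\Delta$) of a torsion abelian group (since $Z(\ck)$ is finite). Consequently I can choose a finite Galois extension $L/k$ trivializing every element of $\delta_k(\Delta)$ under restriction $H^1(k,Z)\to H^1(L,Z)$; each generator $g_i$ of $\Delta$ then admits a lift $\tilde g_i\in\widetilde{G}(L)$ with $\psi(\tilde g_i)=g_i$. I let $S$ absorb all bad primes: archimedean places, primes ramifying in $L$, primes at which $\widetilde G,G,\psi,Z$ fail to spread out to smooth group schemes over $\mathcal O_{k,\ideal p}$, and primes at which some $\tilde g_i$ is not integral in $\widetilde G$. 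Since the $\tilde g_i$ form a finite fixed collection, $S$ is finite.

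Finally, define $\mathcal P$ to be the set of primes of $k$ outside $S$ that split completely in $L/k$; by the Chebotarev density theorem $\mathcal P$ has density $1/[L:k]>0$. For $\ideal p\in\mathcal P$, any prime $\mathfrak P$ of $L$ above $\ideal p$ satisfies $L_{\mathfrak P}=k_{\ideal p}$ with residue field $\dsF_{\ideal p}$, so integrality forces $\tilde g_i\in\widetilde G(\mathcal O_{\ideal p})$; its reduction $\pi_{\ideal p}(\tilde g_i)\in\widetilde G(\dsF_{\ideal p})$ projects to $\pi_{\ideal p}(g_i)$ under $\psi$, whence each $\pi_{\ideal p}(g_i)$, and therefore the group $\pi_{\ideal p}(\Delta)$ they generate, lies in the subgroup $\psi(\widetilde G(\dsF_{\ideal p}))$. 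The main obstacle is the first step, the finiteness of $\delta_k(\Delta)$: this is what lets a single $L$ serve all of $\Delta$ simultaneously, and both centrality of $Z$ (so $\delta_k$ is a homomorphism) and finiteness of $Z$ (so $H^1(k,Z)$ is torsion) are essential; without them the argument collapses.
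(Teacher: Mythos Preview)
Your proof is correct and follows essentially the same strategy as the paper: find a finite extension of $k$ over which the elements of $\Delta$ lift to $\widetilde G$, then take $\mathcal P$ to be the primes splitting completely in that extension via Chebotarev. The paper does this more tersely by setting $\widetilde\Delta=\psi^{-1}(\Delta)$, observing it is finitely generated and hence lands in $\widetilde G(K)$ for some finite extension $K$, and then reducing modulo primes that split completely in $K$; your cohomological phrasing via the connecting map $\delta_k$ and the finiteness of $\delta_k(\Delta)$ is a cleaner justification of exactly why such a single finite extension suffices, but the underlying mechanism is identical.
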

\begin{proof}
Let $\widetilde\Delta:=\psi^{-1}(\Delta)$. This is a subgroup of $\psi^{-1}(G(k))\subset\widetilde{G}(K)$ where $K$ is a finite extension of $k$. Since $\widetilde\Delta$ is finitely generated the entries of its elements are $S$-arithmetic for a finite set of primes of $K$. By Chebotarev's density Theorem (c.f. p. 143 in \cite{IKbook}), there exists a set of positive density of primes $\mathcal P$ of $\mathcal O_k$, that totally splits in $K$, that is for any prime ideal $\ideal P\lhd\mathcal O_{K,S}$ such that $\ideal P|\ideal p$, $\mathcal O_{K,S}/\ideal P\simeq \mathcal O_k/\ideal p\simeq\dsF_{\ideal p}$ and thus when reducing $\Delta$ mod $\ideal p$ we have
$$\pi_{\ideal p}(\Delta)\subset\psi(\widetilde{G}(\dsF_{\ideal p}))$$
\end{proof}
We apply Proposition \ref{prop:primes for sieve} with $G=H^o$, $\Delta=\Gamma\cap H^o$.
For the set $\mathcal P$ given by Proposition \ref{prop:primes for sieve}, set $\left\{N_{\ideal p}\right\}_{\ideal p\in\mathcal P}$, and $\Lambda:=\psi(\psi^{-1}(\Delta)\cap\widetilde{H^o})$. It is therefore left to show that all conditions of Theorem \ref{thm:sieve method} hold:
\begin{enumerate}
    \item Condition \ref{thm:sieve method tau} holds by Theorem \ref{thm:SGV}.
    \item Condition \ref{thm:sieve method poly} holds since $\mathcal P$ has positive density inside primes.
    \item Condition \ref{thm:sieve method disj} holds since by the Strong Approximation Theorem (see e.g. \cite{weisfeiler} Theorem 9.1.1,\cite{nori} Theorem 5.1, and Proposition 5.2 in \cite{JKZ} and the references therein) $\Lambda/N_{\ideal p}\simeq\widetilde H(\dsF_{p})$ and by the Chinese Remainder Theorem, $\Lambda/(N_{\ideal p}\cap N_{\ideal q})=\Lambda/N_{\ideal p}\times\Lambda/N_{\ideal q}\simeq\widetilde H(\dsF_{\ideal p})\times\widetilde H(\dsF_{\ideal q})$
    \item For Condition \ref{thm:sieve method dens}, let $\kappa\in\Gamma/\Lambda$, and let $H_i$ be the coset of $H^o$ such that $\kappa\in H_i$. Let $\ideal p\in\mathcal P$, and write $\kappa=\Lambda\gamma$, where $\gamma\in\kappa$. For a regular semisimple element $h\in\kappa$ such that $\pi_{\ideal p}(h)$ is also regular semisimple, denote $C_{h,\ideal p}$ the unique Cartan subgroup containing $\pi_{\ideal p}(h)$. Then
    \begin{multline}
|\pi_{\ideal p}(Z_1\cap\kappa)|\leq|\left\{h\in\pi_{\ideal p}(\Gamma)_{rs}: \theta_{\Frob_{\ideal p}}(h)\neq U\right\}|\leq\\
\left|\bigcup_{
\substack{
h:\pi_{\ideal p}(h)\in\left(H_i(\dsF_{\ideal p})\right)_{rs}\\
\theta_{Frob_{\ideal p}}(\pi_{\ideal p}(h))\neq U}
}\left(C_{h,\ideal p}\right)^o\pi_{\ideal p}(h)\cap\pi_{\ideal p}(\Lambda\gamma)\right|=\\
\left|\bigcup_{
\substack{h:\pi_{\ideal p}(h)\in \left(H_i(\dsF_{\ideal p})\right)_{rs}\\\theta_{Frob_{\ideal p}}(\pi_{\ideal p}(h))\neq U}
}\left(C_{h,\ideal p}\right)^o\cap\pi_{\ideal p}(\Lambda)\right|
\end{multline}
Where the last equality is since $\pi_{\ideal p}(\gamma h^{-1})\in\pi_{\ideal p}(\Lambda)$ by Proposition \ref{prop:primes for sieve}. Let $C_1\in\mathcal C_i(\dsF)$ be a fixed Cartan subgroup (provided by Lemma \ref{lem:fin fld conj corsp}) satisfying $\theta(C_1)=U$. Since $H_i(\dsF_{\ideal p})$ splits, and by Lemma \ref{lem:fin fld conj corsp} the complement set of the latter set contains the following union of sets
\begin{multline*}
\left|\bigcup_{C\in\mathcal C_i(\dsF_{\ideal p}):\theta_{\Frob_{\ideal p}}(C)\in U}\left(C^o\cap\widetilde{H}(\dsF_{\ideal p})\right)\right|\geq \left|\bigcup_{g\in H^o(\dsF_{\ideal p})/N_{H^o}(C_1)(\dsF_{\ideal p})} C_1^o\cap\widetilde H(\dsF_{\ideal p})\right|\geq\\
\frac{|C_1^o\cap\psi\left(\widetilde{H}(\dsF_{\ideal p})\right)||H^o(\dsF_{\ideal p})|}{|N_{H^o}(C_1)(\dsF_{\ideal p})|}\geq\frac{|C_1^o| |\widetilde{H}(\dsF_{\ideal p})||H(\dsF_{\ideal p})|}{|N_{H^o}(C_1)(\dsF_{\ideal p})|}.
\end{multline*}
In the last inequality we used the fact that $\psi\left(\widetilde H(\dsF_{\ideal p})\right)$ is a normal subgroup of $H^o(\dsF_{\ideal p})$, and that
$$
|C_1^o\cap\psi\left(\widetilde H(\dsF_{\ideal p})\right)|\geq\frac{|C_1^o(\dsF_{\ideal p})||\psi\left(\widetilde H(\dsF_{\ideal p})\right)}{|H(\dsF_{\ideal p})|}.
$$
By Lemma \ref{lem:Weyl centralizers} and as $\widetilde H(\dsF_{\ideal p})$ is a normal subgroup of bounded index in $H^o(\dsF_{\ideal p})$ we get that there exists a constant $c>0$ such that the last expression is bounded below by $c|\psi\left(\widetilde H(\dsF_{\ideal p})\right)|$. We thus find that $1-|\pi_{\ideal p}(Z)|/|\pi_{\ideal p}(\Lambda)|\geq c>0$, which shows that condition \ref{thm:sieve method dens} holds.
\end{enumerate}
Since all conditions of Theorem \ref{thm:sieve method} hold, this concludes the proof of Lemma \ref{lem:sieve4conjcls}.
\subsection{Arbitrary groups}
We conclude this section by proving Theorem \ref{thm:main} over number fields, for any group $H$ such that $H^o$ does not contain a central torus. The following lemma was proved in \cite{JKZ} (Lemma 2.3)
\begin{lem}\label{lem:reduction to ss}
Let $k$ be a field of characteristic 0, and $\Gamma<\Gln(k)$ a subgroup. Let $G=\overline{\Gamma}$ be the Zariski closure of $\Gamma$, $R_u(G)$ the unipotent radical of $G^o$, and $\pi_u:G\to G/R_u(G)$ the reduction map. Then for any $g\in G(k)$, $k(g)=k(\pi_u(g))$.
\end{lem}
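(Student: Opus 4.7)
The plan is to reduce, via Jordan decomposition, to comparing the splitting fields of two diagonalizable $k$-groups attached to $g$ and $\pi_u(g)$, and then observe that $\pi_u$ induces a $k$-isomorphism between them.

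First, write the Jordan decomposition $g=g_sg_u$ in $G(k)$; since $\chara(k)=0$, both $g_s$ and $g_u$ lie in $G(k)$. In any faithful rational representation (such as the embedding $G\hookrightarrow\Gln$), $g$ and $g_s$ share the same list of eigenvalues with multiplicities: trigonalizing $g_s$, the commuting unipotent $g_u$ is simultaneously upper triangular with $1$'s on the diagonal, so $\chi_g=\chi_{g_s}$ and therefore $k(g)=k(g_s)$. As in the proof of Proposition \ref{prop:Gal subqtint Pi}, this equals the splitting field (in the sense of \S\ref{sec:diag_gps}) of the diagonalizable $k$-group $D_g:=\overline{\langle g_s\rangle}$.

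Next, observe that $R_u(G)$ is a characteristic subgroup of $G^o$ and therefore normal in $G$; in characteristic $0$ it is defined over $k$, so $\pi_u$ is a morphism of $k$-algebraic groups. Morphisms of algebraic groups respect Jordan decomposition, hence $\pi_u(g)_s=\pi_u(g_s)$ and $\pi_u(g)_u=\pi_u(g_u)$. Applying the previous paragraph to $\pi_u(g)$ inside a faithful rational representation of $G/R_u(G)$, we obtain $k(\pi_u(g))=k(\pi_u(g_s))$, which is the splitting field of the diagonalizable $k$-group $\pi_u(D_g)=\overline{\langle\pi_u(g_s)\rangle}$.

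It remains to show these two splitting fields coincide. The restriction $\pi_u|_{D_g}\colon D_g\to\pi_u(D_g)$ is a $k$-morphism whose kernel $D_g\cap R_u(G)$ is simultaneously diagonalizable and unipotent, hence trivial. Thus $\pi_u|_{D_g}$ is an isomorphism of diagonalizable $k$-groups, and it induces a $\Gal(\ck/k)$-equivariant isomorphism $X(\pi_u(D_g))\simeq X(D_g)$ of character modules. Since the splitting field of a diagonalizable group depends only on the Galois action on its character module, the two splitting fields agree, yielding $k(g)=k(g_s)=k(\pi_u(g_s))=k(\pi_u(g))$.

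There is no serious obstacle: the argument rests on three standard facts, namely that Jordan decomposition in characteristic $0$ is $k$-rational and preserved by $k$-morphisms of algebraic groups, that a diagonalizable subgroup meets any unipotent subgroup trivially, and that the splitting field of a diagonalizable group is a Galois invariant of its character module. The only conceptual input is the reformulation, already exploited in the proof of Proposition \ref{prop:Gal subqtint Pi}, of $k(g)$ as the splitting field of $D_g$.
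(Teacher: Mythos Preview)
Your proof is correct. The paper does not actually supply its own argument for this lemma: it merely cites \cite{JKZ}, Lemma 2.3. Your write-up, by contrast, is self-contained. The key steps --- reducing to the semisimple part via Jordan decomposition, identifying $k(g_s)$ with the splitting field of the diagonalizable $k$-group $D_g=\overline{\langle g_s\rangle}$ (exactly as in the proof of Proposition \ref{prop:Gal subqtint Pi}), and then noting that $\pi_u|_{D_g}$ is a $k$-isomorphism onto $\pi_u(D_g)$ because $D_g\cap R_u(G)$ is both diagonalizable and unipotent --- are all sound. Along the way your argument implicitly establishes that $k(\pi_u(g))$ is independent of the chosen faithful representation of $G/R_u(G)$, which is needed for the statement to be well-posed.
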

Let $\Gamma'=\pi_u(\Gamma)=\langle\pi_u(\Sigma)\rangle$ where $\pi_u:H\to H/R_u(H^o)$. Then by the assumption that $H^o$ does not contain a central torus we get that $\pi_u(H^o)$ is semisimple. By Lemma \ref{lem:reduction to ss}, $k(\gamma)=k(\pi_u(\gamma))$. For a coset $H_i$ of $H^o$, let $\Pi(H_i,k)=\Pi(\pi_u(H_i),k)$. Then we get that
\begin{multline*}
\dsP(w_k\in\left\{\gamma\in\Gamma:\Gal(k(\gamma)/k)\ne\Pi(\gamma H^o)\right\})\leq\\
\dsP(w_k\in\left\{\pi_u(\gamma)\in\Gamma':\Gal(k(\pi_u(\gamma))/k)\ne\Pi(\pi_u(\gamma H^o))\right\})
\end{multline*}
which by Theorem \ref{thm:main thm use sieve method} is exponentially small.
\section{From number fields to finitely generated fields}\label{sec:general case}
In the previous section we proved Theorem \ref{thm:main} for the
case when $\dsF=k$ is a number field. The goal of this section is to
prove the same result for the general case of finitely generated
field $\dsF$. This will be done using the procedure of
specialization.

Let us recall the notations of \S \ref{sec:Cartans}: Let $\dsF$ be
any characteristic zero field, $H$ an algebraic subgroup of $GL_n$
defined over $\dsF$, $H^o$ its connected component and
$\{H_i\}_{i=1}^m$ the cosets of $H^o$ in $H$. For every $1\leq i\leq
m$ we defined a group $\Pi(H_i,\dsF)$ and we showed (Proposition
\ref{prop:Gal subqtint Pi}) that for every $h\in H_i(\dsF)$,
$\Gal(\dsF(h)/\dsF)$ is isomorphic to a quotient of a subgroup of
$\Pi(H_i,\dsF)$. By Lemma \ref{lem:reduction to ss}, we can assume
that the connected component of the Zariski closure of $\Gamma$ is
semisimple. In \S\ref{sec:sieving in groups} we proved Theorem
\ref{thm:main} for the case where $\dsF=k$ is a number field and
$H^o$ is a semisimple group, namely we showed that if
$\Gamma=\langle\Sigma\rangle$ is a finitely generated subgroup of
$H(k)$, then outside an exponentially small subset of $\Gamma$,
$\Gal(k(\gamma)/k)$ is isomorphic to $\Pi(H^o\gamma,k)$.

Now, if we turn to the general case of finitely generated field
$\dsF$, and $\Gamma=\langle\Sigma\rangle$ is a Zariski dense
subgroup of $H(\dsF)\subset GL_n(\dsF)$, then $\Gamma\subset
GL_n(A)$ where $A$ is some finitely generated $\dsQ$-algebra of
$\dsF$, with $\dsF$ being its field of quotients. If $\varphi:A\to
k$ is a ring homomorphism (in fact $\dsQ$-algebra homomorphism) onto
a number field (such $\varphi$ is called a \textit{specialization}),
then $\varphi$ induces a group homomorphism $\varphi:GL_n(A)\to
GL_n(k)$,and for every $h\in GL_n(A)$, $\Gal(k(\varphi(h))/k)$ is a
quotient of $\Gal(\dsF(h)/\dsF)$. Let $\widetilde\Gamma$ be the
image of $\Gamma$, so it is generated by
$\widetilde\Sigma=\varphi(\Sigma)$, and $\widetilde{H}$ be the
Zariski closure of $\widetilde\Gamma$. In Proposition \ref{prop:spec
construction} below, we will show that for any coset $H_i$ there
exists such a specialization $\varphi_i$ with $\widetilde{\langle
H^o,H_i\rangle}$ isomorphic to $\langle H^o,H_i\rangle$, and such
that $\Pi(\widetilde{H_i},k)=\Pi(H_i,\dsF)$. Once Proposition
\ref{prop:spec construction} is proved, Theorem \ref{thm:main} holds
for any finitely generated $\dsF$ and $H^o$ semisimple. Indeed, we
know on one hand that for $h\in H_i$, $\Gal(\dsF(h)/\dsF)$ is a
subquotient of $\Pi(H_i,\dsF)$. On the other hand
$\Gal(k(\varphi_i(h))/k)$ which is a quotient of
$\Gal(\dsF(h)/\dsF)$ is isomorphic , for \textit{almost all} $h$, to
$\Pi(\widetilde{H_i},k)\simeq\Pi(H_i,\dsF)$. This implies that
$\Gal(\dsF(h)/\dsF)$ is isomorphic to $\Pi(H_i,\dsF)$ for
\textit{almost all} $h\in H$, i.e., outside an exponentially small
subset.

We are left to prove:
\begin{prop}\label{prop:spec construction}
Let $\dsF=\dsQ(y_1,\dots,y_r)$ be a finitely generated field,
$\Gamma\leq GL_n(\dsF)$ a finitely generated with semisimple Zariski
closure $H$. For a connected component $H_i$ denote
$\Gamma_i:=\Gamma\cap\langle H^o,H_i\rangle$. Then for any connected
component $H_i$, there exist a finitely generated $\dsQ$-algebra
$A_i=\dsQ[x_1,\dots,x_r]$ and a $\dsQ$-algebra homomorphism
$\varphi_i$ from $A_i$ to a number field $k_i$, inducing a group
homomorphism $\tilde\varphi_i:GL_n(A_i)\to GL_n(k_i)$ such that:
\begin{enumerate}
\item\label{prop:spec construction1}
$\Gamma_i\leq GL_n(A_i)$
\item\label{prop:spec construction2}
$\dsF$ is the field of quotients of $A_i$.
\item\label{prop:spec construction3}
If $\widetilde H$ denotes the Zariski closure of $\tilde\varphi_i(\Gamma_i)$, then $\tilde\varphi_i$ induces an isomorphism between $\Gamma_i/H^o\cap\Gamma$ and $\tilde\varphi_i(\Gamma_i)/\widetilde{H^o}\cap\tilde\varphi_i(\Gamma_i)$ and (as $\Gamma_i$ is Zariski dense in $\langle H^o,H_i\rangle$ and $\tilde\varphi_i(\Gamma_i)$ in $\widetilde{H}$) also between $\langle H^o,H_i\rangle/H^o$ and $\widetilde H/\widetilde{H^o}$ which we also denote by $\tilde\varphi_i$.
\item\label{prop:spec construction4}
$\Pi(H_i,\dsF)\simeq\Pi(\widetilde{H_i},k)$
\end{enumerate}
\end{prop}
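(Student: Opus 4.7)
The plan is to construct $A_i$ as a finitely generated $\dsQ$-subalgebra of $\dsF$ carrying all the algebraic and Galois data associated to $H_i$ and a chosen $\dsF$-Cartan subgroup $C \in \mathcal C_i(\dsF)$, and then to produce $\varphi_i$ via Hilbert's irreducibility theorem. I would first take $A_i$ to be the $\dsQ$-algebra generated by the matrix entries of the finite generating set of $\Gamma_i$ together with the inverses of their determinants; this ensures (1), and inverting further non-zero elements of $\dsF$ throughout the construction keeps $\dsF$ as the field of fractions, giving (2). Then I would enlarge $A_i$ to contain: (a) the coefficients of finite defining systems for $H$, $H^o$, $N_{H^o}(C)$, $Z_{H^o}(C^o)$ and $C$; (b) a primitive element $\theta$ for the splitting field $\dsF_C/\dsF$, its minimal polynomial $P(T) \in A_i[T]$, and the inverse of the discriminant of $P$; and (c) for each non-trivial coset representative $\gamma$ of $H^o \cap \Gamma$ in $\Gamma_i$, a polynomial $f_\gamma$ in matrix entries that vanishes identically on $H^o$ but not on $\gamma$, together with $f_\gamma(\gamma)^{-1}$. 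Only finitely many inversions are needed because $\Gamma_i/(H^o \cap \Gamma)$ is finite.

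To construct $\varphi_i$, I would fix a transcendence basis $t_1, \ldots, t_d$ of $\dsF/\dsQ$ contained in $A_i$. If $d = 0$ then $\dsF$ is already a number field and there is nothing to prove, so assume $d \geq 1$. Since $\dsF$ is a finitely generated extension of $\dsQ$ it is Hilbertian, so Hilbert's irreducibility theorem applied to a Galois-theoretic resolvent of $P(T)$ supplies a Hilbert (hence Zariski-dense) set of substitutions $(t_1, \ldots, t_d) \mapsto (a_1, \ldots, a_d) \in \overline{\dsQ}^d$ along which $\Gal(\dsF_C/\dsF)$ is preserved. Intersecting with the Zariski-open condition that every element of $A_i$ inverted in the previous step specializes to a non-zero element still leaves a non-empty set, because a Hilbert set meets every non-empty Zariski-open subset of $\overline{\dsQ}^d$. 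A substitution in this intersection extends uniquely to a $\dsQ$-algebra homomorphism $\varphi_i: A_i \to k_i$, where $k_i$ is the number field generated over $\dsQ(a_1, \ldots, a_d)$ by the images of the remaining algebraic generators of $A_i$ over $\dsQ[t_1, \ldots, t_d]$.

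To verify (3), note that for each non-trivial coset representative $\gamma$ the non-vanishing $\varphi_i(f_\gamma(\gamma)) \neq 0$ forces $\tilde\varphi_i(\gamma) \notin \widetilde{H^o}$, so the natural projection $\Gamma_i/(H^o \cap \Gamma) \to \tilde\varphi_i(\Gamma_i)/(\widetilde{H^o} \cap \tilde\varphi_i(\Gamma_i))$ is injective and clearly surjective; Zariski density of $\Gamma_i$ in $\langle H^o, H_i\rangle$ and of $\tilde\varphi_i(\Gamma_i)$ in $\widetilde H$ then yield the matching isomorphism $\langle H^o, H_i\rangle/H^o \simeq \widetilde H/\widetilde{H^o}$. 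For (4), the Weyl group $\widetilde W(H_i, C) \simeq (N_{H^o}(C)/Z_{H^o}(C^o))(\overline{\dsF})$ is a finite algebraic group defined over $A_i$; at a sufficiently general specialization it maps isomorphically to $\widetilde W(\widetilde{H_i}, \widetilde C)$, with matching action on the character lattice, because the lattice, the roots and the conjugation action are all integral data preserved under $\varphi_i$. Simultaneously, by the choice of $\varphi_i$, the Galois image $\phi_C(\Gal(\overline{\dsF}/\dsF))$ in $\operatorname{Aut}(X(C))$ is identified with $\phi_{\widetilde C}(\Gal(\overline{k_i}/k_i))$ in $\operatorname{Aut}(X(\widetilde C))$. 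The subgroup of $\operatorname{Aut}(X(C))$ generated by these two components is therefore identified with its counterpart for $\widetilde C$, giving $\Pi(H_i, \dsF) \simeq \Pi(\widetilde{H_i}, k_i)$.

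The main obstacle is carrying out the Hilbert-irreducibility step simultaneously with the Zariski-open conditions of (1)–(3) and the invertibility requirements in $A_i$, all in a single specialization landing in a number field. This succeeds only because $\dsF$ is Hilbertian of positive transcendence degree over $\dsQ$, so the good substitutions form a Hilbert subset of $\overline{\dsQ}^d$ which remains non-empty after intersection with any non-empty Zariski-open set; the remainder of the proof is then a matter of bookkeeping, checking that each piece of the algebraic structure used to define $\Pi(H_i, \dsF)$ has been included in $A_i$ and therefore transports faithfully under $\varphi_i$.
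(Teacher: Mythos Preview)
Your overall strategy---spread out over a finitely generated $\dsQ$-algebra and specialize via Hilbert irreducibility---is the paper's, and your Galois-preservation step is essentially Lemma~\ref{lem:hilberianity}. But there is a real gap: you treat $\widetilde H$, the Zariski closure of $\tilde\varphi_i(\Gamma_i)$, as if it were the special fibre at $\varphi_i$ of a group scheme over $\mathrm{Spec}(A_i)$ with generic fibre $H$. Nothing in your construction forces these to agree; $\tilde\varphi_i(\Gamma_i)$ could be Zariski-dense only in a proper subgroup of that fibre. Your polynomials $f_\gamma$ separate the finitely many cosets of $H^o$ but say nothing about whether $\tilde\varphi_i(H^o\cap\Gamma)$ fills out the fibre of $H^o$, or even whether its Zariski closure is connected. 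Without control over $\widetilde{H^o}$, your map in (3) is not well-defined (you need $\tilde\varphi_i(H^o\cap\Gamma)\subseteq\widetilde{H^o}$ for it to factor), and (4) breaks outright: $\Pi(\widetilde{H_i},k)$ is built from $N_{\widetilde{H^o}}(\widetilde C)$ and $Z_{\widetilde{H^o}}(\widetilde C^o)$, objects attached to the unknown $\widetilde H$, not to the fibre of the scheme you spread out.

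The paper closes this gap with a substantive external input, Theorem~4.1 of \cite{LL}: for $\Gamma$ Zariski-dense in a connected semisimple group, the Zariski closure of $\tilde\varphi(\Gamma)$ is isomorphic to that group for $\varphi$ in a Zariski-open subset of $\mathrm{Spec}(A)$. Lemma~\ref{lem:gp hilbert} extends this to cyclic $H/H^o$ via the splitting $H=H^o\rtimes J$ of Proposition~\ref{prop:non connected cyclic structure}. Only once $\widetilde H\simeq H$ is established do the Weyl-group and Galois-group identifications go through; this is not bookkeeping and cannot be absorbed into the choice of $A_i$.
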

The proof of Proposition \ref{prop:spec construction} is a consequence of the following lemmas
\begin{lem}\label{lem:hilberianity}
Let $K$ be a number field, $R=K[x_1,\dots,x_l]$ a finitely generated
integral domain over $K$, $F=Quot(R)$ the field of quotients of $R$,
$E$ a finite Galois extension of $F$ and $S$ the integral closure of
$R$ in $E$. Then there exists a $K$-epimorphism $\varphi$ from $S$
onto a finite extension $\overline E$ of $K$, such that if we denote
$\overline F=\varphi(R)$, then $\overline E$ is a Galois extension
of $\overline F$ and $\Gal(\overline E/\overline F)=\Gal(E/F)$.
\end{lem}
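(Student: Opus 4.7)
The approach is via Hilbert's irreducibility theorem (HIT). The plan is to produce a maximal ideal $\mathfrak{m} \triangleleft R$ together with a prime $\mathfrak{M} \triangleleft S$ above it, such that the decomposition map $\Gal(E/F) \to \Gal((S/\mathfrak{M})/(R/\mathfrak{m}))$ is an isomorphism. Then $\varphi\colon S\twoheadrightarrow S/\mathfrak{M}=:\overline E$ restricts to $R\twoheadrightarrow R/\mathfrak{m}=:\overline F$, and both residue fields are number fields: $\overline F$ because $R$ is a finitely generated $K$-algebra, so its residue fields at closed points are finite over $K$ by the Nullstellensatz, and $\overline E$ because it is finite over $\overline F$ (as $S/R$ is integral).

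For the setup, choose a primitive element $\alpha$ for $E/F$ and, after scaling by a nonzero element of $R$, arrange that its monic minimal polynomial $p(T) \in F[T]$ lies in $R[T]$; let $\Delta \in R\smallsetminus\{0\}$ be its discriminant. Every maximal ideal $\mathfrak{m}$ with $\Delta\notin\mathfrak{m}$ is unramified in $S$, so for any $\mathfrak{M}$ above $\mathfrak{m}$ the decomposition group at $\mathfrak{M}$ maps isomorphically onto $\Gal((S/\mathfrak{M})/(R/\mathfrak{m}))$. Thus it suffices to find $\mathfrak{m}$ for which this decomposition group equals all of $\Gal(E/F)$, equivalently for which the reduced polynomial $\overline p(T)\in (R/\mathfrak{m})[T]$ is irreducible of degree $[E:F]$ and has Galois group $\Gal(E/F)$ over $R/\mathfrak{m}$.

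Existence of such $\mathfrak{m}$ is exactly the content of HIT. The transcendence-degree-zero case is trivial (take $\varphi=\mathrm{id}$), so assume $d:=\operatorname{trdeg}_K F\geq 1$; by Noether normalization choose $y_1,\dots,y_d\in R$ algebraically independent over $K$ with $R$ integral over $R_0:=K[y_1,\dots,y_d]$. The rational function field $L:=K(y_1,\dots,y_d)$ is Hilbertian since $K$ is. After inverting finitely many elements of $R_0$ --- namely $\Delta$, a common denominator for the $R_0$-coefficients of $p$, and a nonzero element of the conductor of $R$ over $R_0$ --- every specialization $(y_1,\dots,y_d)\mapsto(a_1,\dots,a_d)\in K^d$ outside a proper Zariski-closed subset extends uniquely to a $K$-algebra morphism $\varphi\colon R\to\overline K$ with image in a number field. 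HIT then furnishes specializations in this open set for which $\overline p(T)$ has the required Galois group, and any $\mathfrak{M}$ over $\mathfrak{m}:=\ker(\varphi|_R)$ completes the construction.

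The main subtlety --- and the hardest step --- is that HIT over $L$ most directly preserves the Galois group over $L$, not over $F$; when $F\ne L$, one must take care that the specialization also preserves the extension $F/L$ so that the subgroup $\Gal(E/F)\leq\Gal(E/L)$ is tracked correctly. This is handled by applying HIT to a polynomial whose splitting field over $L$ is the Galois closure of $E/L$, together with a primitive element of $F/L$: imposing simultaneously that both Galois groups specialize correctly is an intersection of Hilbert sets, hence nonempty, and the Galois correspondence then delivers the desired isomorphism $\Gal(\overline E/\overline F)\simeq\Gal(E/F)$.
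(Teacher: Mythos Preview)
Your proof is correct and follows the same overall strategy as the paper --- Noether normalization followed by Hilbert's irreducibility --- but the execution differs in one notable way. You take a primitive element for $E/F$, and then must confront the ``subtlety'' that HIT is naturally phrased over the purely transcendental field $L=K(y_1,\dots,y_d)$ rather than over $F$; you resolve this by passing to the Galois closure of $E/L$ and intersecting two Hilbert sets. The paper sidesteps this entirely: it takes a primitive element $z$ for $E/F_0$ (with $F_0=\mathrm{Quot}(R_0)$), applies HIT over $K$ to keep its minimal polynomial irreducible, and then uses a degree count --- $[\overline E:K]=[E:F_0]$ forces both $[\overline F:K]=[F:F_0]$ and $[\overline E:\overline F]=[E:F]$ --- together with a lemma from Fried--Jarden to conclude. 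The paper's route is shorter and avoids your last paragraph altogether; your route, via decomposition groups at unramified primes, is more conceptual and makes explicit the mechanism by which the Galois group is transported (though you should note that the statement ``$\Delta\notin\mathfrak m$ implies $\mathfrak m$ unramified in $S$'' uses that $R_{\mathfrak m}$ is regular, which holds only on a Zariski-open set --- harmless here, but worth saying).
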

\begin{proof}
By Noether's normalization Lemma, let $t_1,\dots,t_r\in R$ be
algebraically independent over $K$, such that $R$ is integral over
$R_0=K[t_1,\dots,t_r]$ (cf. \cite{FJ} Proposition 5.2.1). Denote
$F_0=Quot(R_0)$. Let $z\in S$ be a primitive element for $E/F_0$,
and let $f=f(t_1,\dots,t_r,Z)\in R_0[Z]$ be the irreducible
polynomial of $z$ over $F_0$. Since $K$ is a Hilbertian field, there
exists $a=(a_1,\dots,a_r)\in K^r$ such that $f(a,Z)$ is separable
and irreducible over $K$, and $\deg(f(a,Z))=\deg(f(t,Z))$. Since $S$
is integral over $R_0$, we can extend the specialization $t\mapsto
a$ to a $K$-homomorphism $\varphi$ from $S$ onto a finite extension
$\overline E$ of $K$. Denote by $\overline F=\varphi(R)$, and get
that
\begin{equation}\label{eq:hilbertianity}
[E:F_0]\geq [\overline E:K]\geq \deg(f(a,Z))=\deg(f(t,Z))=[E:F_0]
\end{equation}
and therefore $[E:F_0]=[\overline E:K]$. Notice that $[\overline
F:K]\leq[F:F_0]$ and $[\overline E:\overline F]\leq[E:F]$. Now,
since $[\overline E:K]=[\overline E:\overline F][\overline F:K]$,
and $[E:F_0]=[E:F][F:F_0]$ we get from \eqref{eq:hilbertianity} that
$[\overline E:\overline F]=[E:F]$, and by Lemma 6.1 in \cite{FJ}
$[\overline E:\overline F]$ is a Galois extension, and $\varphi$
induces an isomorphism $\Gal(E/F)\simeq \Gal(\overline E/\overline
F)$.
\end{proof}
\begin{lem}\label{lem:gp hilbert}
Let $A$ be a finitely generated integral domain over $\dsQ$,
$\Gamma\leq GL_n(A)$ a finitely generated group with Zariski closure
$H$ satisfying $H^o$ is semisimple and $H/H^o$ is cyclic. Then there
is a number field $k$ and a $\dsQ$-homomorphism $\varphi:A\to k$,
such that the Zariski closure of $\varphi(\Gamma)$ is isomorphic to
$H$.
\end{lem}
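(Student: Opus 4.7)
The plan is to spread $H$ out as a group scheme over $\operatorname{Spec}(A)$ and then apply Hilbert irreducibility (Lemma \ref{lem:hilberianity}) to produce a specialization preserving the relevant algebraic structure. After replacing $A$ by a suitable finite localization, the defining polynomials of $H$ inside $GL_n$ may be taken with coefficients in $A$, and they cut out a group scheme $\mathcal{H}\to\operatorname{Spec}(A)$ whose geometric fibers over a dense Zariski-open $U\subseteq\operatorname{Spec}(A)$ are structurally isomorphic to $H$: the identity component is semisimple of the same Dynkin type as $H^o$, and the component group remains cyclic of order $m=[H:H^o]$. For any $\dsQ$-homomorphism $\varphi:A\to k$ factoring through $U$, the specialized $\widetilde{H}:=\varphi(H)\subseteq GL_n(k)$ is isomorphic to $H$ as an algebraic group and automatically contains $\varphi(\Gamma)$, so the remaining task is to choose $\varphi$ so that $\varphi(\Gamma)$ is Zariski dense in $\widetilde{H}$.

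By Noetherianity of $GL_n$, there exists a finite subset $\gamma_1,\ldots,\gamma_t\in\Gamma$ whose Zariski closure already equals $H$; I enlarge the subset if necessary to include representatives of all $m$ cosets of $H^o$ in $H$. The locus $V\subseteq U$ on which the Zariski closure of the specialized tuple $(\varphi(\gamma_1),\ldots,\varphi(\gamma_t))$ has the same dimension as $\widetilde{H}$ is Zariski open in $U$, by semi-continuity of the dimension of the image in an algebraic family, and it is non-empty because the generic point of $\operatorname{Spec}(A)$ lies in it. On $V$, the identity component of $\overline{\varphi(\Gamma)}$ coincides with $\widetilde{H}^o$ for dimension reasons, and I further shrink $V$ to a non-empty Zariski-open $V'$ on which all $m$ cosets of $\widetilde{H}^o$ are hit by the chosen coset representatives among the $\varphi(\gamma_j)$'s.

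I then pick a finite Galois extension $E/F$, with $F=\operatorname{Quot}(A)$, large enough to encode the Galois-theoretic data of the component group $H/H^o$. Applying Lemma \ref{lem:hilberianity}, together with the Hilbert property of $F$ which allows us to additionally require $\varphi$ to land in the prescribed non-empty open $V'\subset\operatorname{Spec}(A)$, I obtain a specialization $\varphi:A\to k$ with $\overline{\varphi(\Gamma)}=\widetilde{H}\cong H$, as required. The main obstacle is ensuring that $V$ is Zariski open and non-empty: a priori the family of proper algebraic subgroups of $H$ into which the images of the generators could degenerate is infinite, including continuous families such as parabolic subgroups, so one cannot simply avoid them one at a time. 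The resolution is the fiber-wise semi-continuity of the dimension of Zariski closures in an algebraic family, which ensures that the locus of maximal-dimension closure is Zariski-open, and non-empty since it contains the generic point; once this finite-dimensional reduction is in place, Lemma \ref{lem:hilberianity} ties the argument down to an effective specialization over a number field.
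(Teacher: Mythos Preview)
Your overall shape is right --- spread $H$ out over $\operatorname{Spec}(A)$ and look for an open locus of good specializations --- and this is also what the paper does. But the step you yourself call the ``main obstacle'' is not actually resolved.

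The assertion that
\[
V=\bigl\{\varphi\in U:\ \overline{\langle\varphi(\gamma_1),\dots,\varphi(\gamma_t)\rangle}\ \text{has dimension}\ \dim H\bigr\}
\]
is Zariski open cannot be obtained from ``semi-continuity of the dimension of the image in an algebraic family'', because there is no morphism of finite type whose image is the group generated by the specialized tuple. The set $\langle\varphi(\gamma_i)\rangle$ is a countable union of points (values of word maps at the fixed sections $\gamma_i$), and the dimension of its Zariski closure is not governed by any standard semi-continuity theorem. Concretely, already for a single section $g\in GL_n(A)$ the locus where $\varphi(g)$ has finite order is an \emph{infinite} union $\bigcup_n\{\varphi:\varphi(g)^n=1\}$ of closed sets, generally not closed. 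For a tuple in a semisimple $H^o$ one must exclude degeneration into any of the infinitely many proper closed subgroups, and that is exactly the nontrivial content you are trying to bypass.

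The paper does not attempt to prove this openness from scratch. For connected semisimple $H$ it invokes \cite{LL}, Theorem~4.1, whose proof uses genuine structure theory of algebraic groups in characteristic zero and in fact yields openness of the good locus in $\operatorname{Spec}(A)$. For the non-connected cyclic case the paper uses Proposition~\ref{prop:non connected cyclic structure} to write $H=H^o\rtimes J$ with $J$ finite cyclic realized inside $GL_n(K)$ for a fixed number field $K$; one then works with $K$-specializations of $KA$, applies \cite{LL} to $H^o$, and notes that $\varphi$ acts trivially on $J$ so the $m$ cosets remain distinct after specialization. Your treatment of the component group via ``a finite Galois extension $E/F$ large enough to encode the Galois-theoretic data of $H/H^o$'' is vague by comparison --- there is no Galois theory in a cyclic group of order $m$ --- and the paper's semidirect-product reduction is both cleaner and actually carried out.

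Finally, your invocation of Lemma~\ref{lem:hilberianity} at the end is unnecessary for this lemma: if $V'$ is a non-empty Zariski open in $\operatorname{Spec}(A)$, then closed points are dense in it and any one of them furnishes the desired $\varphi:A\to k$. Hilbertianity plays no role here; it enters only later in the paper, in the proof of Proposition~\ref{prop:spec construction}, where a specific Galois group must also be preserved.
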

\begin{proof}
This is proved in \cite{LL} Theorem 4.1 for the case where $H$ is a
connected simple group. The proof works word by word if $H$ is
semisimple and connected , as long as we are in the characteristic
zero. (The remark there before Theorem 4.1 warns that the proof is
more complicated for semisimple groups, but this is because of the
positive characteristic cases. In these cases $H$ has many finite
subgroups, which in characteristic zero all of them lie in a proper
subvariety, while in positive characteristic they do not). In fact
the proof in \cite{LL} shows that the result holds for an open
subset of $Spec(A)$.

Now, to get the non-connected case, we use Proposition \ref{prop:non
connected cyclic structure}, to get that if $H/H^o$ is cyclic, then
there exists a finite cyclic group $J$ such that $H=H^o\rtimes J$.
We can assume $J$ is inside $GL_n(K)$ where $K$ is some number
field, and we can replace $A$ by $KA$, and use \cite{LL} Theorem 4.1
when this time we take a $K$-homomorphism from $KA$ to $k$, a number
field containing $K$. It is easy to see that different connected
components give different cosets of $\varphi(\Gamma)^o$, as these
cosets are represented by elements of $J$ on which $\varphi$ acts
trivially.
\end{proof}
We are now ready to prove Proposition \ref{prop:spec construction}:
Fix a connected component $H_i$, and assume $H=\langle
H^o,H_i\rangle$. We can clearly choose $A_i\subset\dsF$ satisfying
\ref{prop:spec construction1} and \ref{prop:spec construction2}
since $\Gamma$ and $\dsF$ are finitely generated. Lemma \ref{lem:gp
hilbert} shows that we can arrange for $\varphi_i$ for which
$\tilde\varphi_i$ satisfies also \ref{prop:spec construction3}.
Moreover, this is so for an open subset of $Spec(KA)$ where $K$ is
the number field defined in the proof. Finally we recall that for
every connected component $H_i$, $\Pi(H_i,\dsF)$ is satisfies the
following exact sequence:
$$
1\to\W(H_i,C)\to\Pi(H_i,C,\dsF)\to\Gal(\dsF_i^W/\dsF)\to1,
$$
where $\dsF_i^W$ is the fixed field of the inverse image of
$\W(H_i,C)$, under the map
$\phi_C:\Gal(\overline{\dsF}/\dsF)\to\Pi(H_i,C,\dsF)$. Since
$H=H^o\rtimes J$ for a finite cyclic group defined over $K$, we get
that the action of every connected component on $H^o$ is
$K$-rational, and therefore it is the same on
$\widetilde{H^o},\W(\widetilde H^o)$, and in particular
$\W(H_i)=\W(\widetilde{H_i})$. It is therefore left to show that
there exists a specialization $\varphi$ such that
$\Gal(\dsF_i/\dsF)\simeq \Gal(k_i^W/k)$, where $k_i$  is the fixed
field of the inverse image of $\W(\widetilde
H_i,\widetilde{\varphi}(C))$. We first notice that since $\varphi$
is onto, any Cartan subgroup $\widetilde C<\widetilde H$ is the
image of a Cartan subgroup $C<H$. Also, if $\dsF_C$ is the splitting
of $C$ which is the splitting field of a polynomial
$f_C(T)\in\dsF[T]$, then the splitting field of $\varphi(f_C)(T)\in
k[T]$ is the splitting field of $\widetilde C$, and if
$\sigma\in\Gal(k_C/k)$ satisfies that
$\phi_{\widetilde{C}}(\sigma)\in\W(H_i,\widetilde C)$, then
$\phi_C(\sigma)\in\W(H_i,C)$. In particular, if we denote by
$f_i(T)\in\dsF[T]$ to be a polynomial such that its splitting field
is $\dsF_i^W$, then the splitting field of $\varphi(f_i)(T)\in k[T]$
is $k_i^W$. Now, let $S_i$ be the integral closure of $A_i$ in
$F_i^W$. By Lemma \ref{lem:hilberianity}, there is a specialization
of $S_i$ onto a number field $k$, such that
$\Gal(\dsF_i^W/\dsF)=\Gal(\varphi(S_i)/\varphi(A))$. Moreover, the
set of such specializations is an Hilbertian set (cf. \cite{FJ}, ch.
12) and so Zariski dense, hence has a non trivial intersection with
the set of specializations given by Lemma \ref{lem:gp hilbert} which
ensures that the Zariski closure of $\tilde\varphi(\Gamma)$ is
isomorphic to $H$. Now for any fixed connected component $H_i$,
$$
\xymatrix{
 1\ar[r]&\W(H_i,C)\ar[r]&\Pi(H_i,C,\dsF)\ar[r]&\Gal(\dsF_i^W/\dsF)\ar[r]&1\\
 1\ar[r]&\W(H_i,\widetilde C)\ar[r]\ar[u]&\Pi(H_i,\widetilde
 C,k)\ar[r],\ar[u]&\Gal(k_i^W/k)\ar[r]\ar[u]&1
 }
$$
where all arrows from the bottom sequence above are induced by
$\varphi$. By construction we get that all arrows are isomorphisms,
and thus $\Pi(H_i,C,\dsF),\Pi(\widetilde H_i,\widetilde C,k)$ are
isomorphic, and Proposition \ref{prop:spec construction} is proven.
\section{Examples and counter examples}\label{sec:examples}
In order to give a better understanding for the types of Galois groups constructed by Theorem \ref{thm:main}, we give, in this section, some examples. We end this section with some counter examples to Theorem \ref{thm:main} for different types of groups and fields for which one of the conditions of Theorem \ref{thm:main} fails.
\subsection{Connected groups with $\Pi$ larger than the Weyl group} As a first example, we show how it can happen that the group $H$ is connected, but $\Pi(H,\dsF)\neq W(H)$. Let $\dsF=\dsQ$, $k$ a number field, and $L$ its Galois closure. Let $\Gamma=Res^{k}_{\dsQ}(SL_n)(\dsZ)$. Note that $\Gamma$ is isomorphic to $SL_n(\mathcal O_k)$, where $\mathcal O_k$ is the ring of integers of $k$, but we take it as a subgroup of $SL_{nd}(\dsQ)$. It is finitely generated, and its Zariski closure is $H$ which over $\overline{\dsQ}$ is isomorphic to $SL_n^d$, where $d$ is the degree of $k$ over $\dsQ$. The splitting field of $H$ is $L$, since $\Gal(\overline{\dsQ}/\dsQ)$ acts transitively on the absolutely simple components of $H$, which are the $SL_n$ components, and clearly $H$ splits over $L$. We therefore get that $\Pi(H,\dsQ)=\langle S_n^d,\Gal(L/\dsQ)\rangle\simeq S_nwr \Gal(L/\dsQ)$. This example can be generalized by replacing $SL_n$ with any other semisimple group $G$. In that case, $H=Res^k_{\dsQ}(G),\Gamma=H(\dsZ)$, and we get that $\Pi(H,\dsQ)\simeq W(G)wr \Gal(L/\dsQ)$.
\subsection{Non connected groups}
\subsubsection{Groups with irreducible Dynkin diagram}
Let $H:=SL_n\rtimes\langle\tau\rangle$, where $\tau$ is the automorphism of $SL_n$ sending $A\mapsto (A^t)^{-1}$, and $\Gamma:=H(\dsZ)$. A representation $\rho:H\to GL_{2n}$ is given by
$$
\rho(A)=\begin{pmatrix}A&0\\0&(A^t)^{-1}\end{pmatrix},\quad\rho(\tau)=\begin{pmatrix}0&I_n\\I_n&0\end{pmatrix}
$$
In order to have a better understanding of $W(H^o\tau)$, we first give examples of two Cartan subgroups associated to $H^o\tau$. Since $\tau$ is of finite order, then it is contained in a Cartan subgroup. Let $C_\tau$ be such a group. Then $C_\tau^o$ is a maximal torus of $Z_{SL_n}(\tau)=SO(n)$. Notice that if $n=2k+1$, then $W^\tau=W(SO(n))\simeq (\dsZ/2\dsZ)^k\rtimes S_k$, however if $n=2k$, then $W^\tau$ is still isomorphic to $(\dsZ/2\dsZ)^k\rtimes S_k$, but $W(SO(n))=(\dsZ/2\dsZ)_0^k\rtimes S_k$, where the subscript stands for vectors with sum zero. For the second example, assume $n=2k$ is even, and denote
$$
J=\begin{pmatrix} 0&I\\-I&0\end{pmatrix}
$$
Again $J\tau$ is of finite order, and therefore contained in a Cartan subgroup. Denote $C_{J\tau}$ such a group. Again $C_{J\tau}^o$ is a maximal torus in $Z_{SL_n}(J\tau)=Sp_n$. Notice that the centralizers of $\tau,J\tau$ are of different types, however they are of the same rank, $k$, and $W^{J\tau}=W(Sp_n)=(\dsZ/2\dsZ)^k\rtimes S_k$.
By Lemma \ref{lem:Weyl gp strctr}, for the connected component $H^o\tau$, the Weyl group is $W(H^o\tau)=\left(T_C/(T^\tau)\right)^\tau\rtimes W^\tau$ (notice that since $H^o$ is simply connected $W(H^o\tau)=\W(H^o\tau)$). As mentioned, $W^\tau$ in this case is $W_k=(\dsZ/2\dsZ)^k\rtimes S_k\simeq W(B_k)\simeq W(C_k)$, where $n=2k$ if $n$ is even or $n=2k+1$ if $n$ is odd, and $(T_C/(T^\tau))^\tau=(\dsZ/2\dsZ)^{r}$, where $r=n-1-k$ is the difference between the ranks of the $A_{n-1}$ type group and $B_k$ type. We therefore get that for both cases ,$n$ even or odd, $W(H^o\tau)=(\dsZ/2\dsZ)wr_{\Omega}W_k$, where $\Omega$ is a set of $k$ signed pairs, and $W_k$ acts on $\Omega$ in the standard way.
\subsubsection{Groups with reducible Dynkin diagram}
Let $H:=SL_n^d\rtimes\langle\tau\rangle$, where $\tau$ is the automorphism that cyclicly permutes the $SL_n$ factors, and $\Gamma:=H(\dsZ)$. A representation of $\rho:H\to GL_{nd}$ is given in a similar way to the previous example. Notice that this representation is in fact into $GL(\underbrace{V\oplus\cdots\oplus V}_{d\text{ times}})$ permuting the $V$ factors, where $V$ is $n$-dimensional. In particular, since $\rho(\tau)$ permutes the factors isomorphic to $V$ cyclicly, we find that if $\lambda$ is an eigenvalue of an element $\rho(A\tau)$, then $\lambda\zeta_d$ is also an eigenvalue for every $\zeta_d$ an $d$-th root of unity. In particular $\dsQ(\zeta_d)$ is contained in the splitting field of $H^o\tau$. The Weyl group in this case by a similar computation is $W(H^o\tau)=(\dsZ/d\dsZ)^{n-1}\rtimes S_n$, and $\Pi(H^o\tau,\dsQ)=\langle W(H^o\tau),\Gal(\dsQ(\zeta_d)/\dsQ)\rangle$.

The situation can be described by the following diagram of tower of fields extensions:
$$
\xymatrix{
\dsQ(\gamma)\ar@{-}_{\Gal(\dsQ(\gamma)/\dsQ(\gamma^d))\simeq\left(\dsZ/d\dsZ\right)^{n-1}}[d]\\
\dsQ(\gamma^d)\ar@{-}_{\Gal(\dsQ(\gamma^d)/\dsQ(\zeta_d))\simeq S_n}[d]\\
\dsQ(\zeta_d)\ar@{-}_{\Gal(\dsQ(\zeta_d)/\dsQ)\simeq \left(\dsZ/d\dsZ\right)^\times}[d]\\\dsQ}
$$
\subsection{Counter examples}
\subsubsection{Non semisimple groups}
Let $\Gamma:=\langle A^{\pm1},J\rangle$, where
$$
A=\begin{pmatrix}2& \\& 3 \end{pmatrix},\quad J=\begin{pmatrix}&1\\1&\end{pmatrix}
$$
Then $H=\overline\Gamma=\begin{pmatrix}*&\\&*\end{pmatrix}\bigcup\begin{pmatrix}&*\\ * &\end{pmatrix}$. Let $\Sigma:=\{A,A^{-1},J,I_2\}$ be an admissible generating set, where $I_2$ is the identity element, and let $X_k$ be the $k$-th step of the random walk generated by $\Sigma$. Notice that if $X_k=\begin{pmatrix}&a\\ b&\end{pmatrix}\not\in H^o$, then $\dsQ(X_k)=\dsQ(\sqrt{ab})$. Also, if $X_k\not\in H^o$, then if $X_k=A^{n_1}JA^{n_2}J\cdots=s_1s_2\cdots s_k\quad,s_i\in\Sigma $, then $J$ must appear an odd number of times, and therefore, if $k$ is even, $ab$ is a square if and only if $N_I:=|\{i:s_i=I\}|$ is odd, which occurs with probability $1/2-(1/2)^n$. On the other hand, if $k$ is odd, then $ab$ is a square if and only if $N_I$ is even which occurs with probability $1/2-(1/2)^n$. In particular we get that
\begin{eqnarray*}
\lim_{k\to\infty}\dsP(\Gal(\dsQ(X_{k})/\dsQ)=\Pi_1(H^oX_{k}))=\frac12\\
\lim_{k\to\infty}\dsP(\Gal(\dsQ(X_{k})/\dsQ)=\Pi_2(H^oX_{k}))=\frac12
\end{eqnarray*}
where
$$
\Pi_1(H^o\gamma)=\begin{cases}\{1\} & H^o\gamma=H^o\\\dsZ/2\dsZ & H^o\gamma\neq H^o\end{cases},\quad
\Pi_2(H^o\gamma)=\{e\}
$$
That is, there is no typical behaviour for the non connected component.
\subsubsection{Not finitely generated fields}
In this example, we show that if the field $\dsF$ is not finitely generated, then again it is possible that there is no typical behaviour for the Galois groups. We prove the following:
\begin{thm}\label{thm:not finitely gen counter ex}
Let $H:=SL_n$, $n\geq5$, $\Gamma:=H(\dsZ)$. Let $\Sigma$ be a finite generating set of $\Gamma$, and let $X_k$ be the corresponding random walk. Then for any pair of subgroups $\G=(G_1,G_2)$ of the alternating group $Alt(n)$ there exists an algebraic extension $\dsF_{\G}$ of $\dsQ$, and sequences $\{n_i(\G)\},\{k_i(\G)\}$, such that
\begin{eqnarray*}
\dsP(\Gal(\dsF_{\G}(X_{n_i(\G)})/\dsF_{\G})=G_1)\geq1-\frac1{2^{i}}\\
\dsP(\Gal(\dsF_{\G}(X_{k_i(\G)})/\dsF_{\G})=G_2)\geq1-\frac1{2^{i}}
\end{eqnarray*}
\end{thm}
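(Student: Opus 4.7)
\emph{Proof plan.} The plan is to build $\dsF_{\G}$ as an ascending union $\bigcup_i \dsF^{(i)}$ of finite algebraic extensions of $\dsQ$, together with integer sequences $n_1<k_1<n_2<k_2<\cdots$, arranging at each stage $i$ that
$$
\dsP(\Gal(\dsF_{\G}(X_{n_i})/\dsF_{\G})=G_1)\geq 1-2^{-i}\quad\text{and}\quad\dsP(\Gal(\dsF_{\G}(X_{k_i})/\dsF_{\G})=G_2)\geq 1-2^{-i}.
$$
The fundamental input is Theorem~\ref{thm:main thm use sieve method} applied over any finite extension $\dsF/\dsQ$: because $SL_n$ is split and simply connected, $\Pi(SL_n,\dsF)=W(SL_n)=S_n$, so there exist $c,\beta>0$ (depending on $\dsF$) with
$$
\dsP(\Gal(\dsF(X_k)/\dsF)\neq S_n)\leq ce^{-\beta k}.
$$
When this Galois group equals $S_n$ it coincides with the Galois group over $\dsQ$, which is equivalent to the splitting field $L_{X_k}:=\dsQ(X_k)$ being linearly disjoint from $\dsF$ over $\dsQ$. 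The use of an infinite algebraic extension is essential, since by the main theorem no finite extension of $\dsQ$ can display two distinct typical behaviours.

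The key inductive step: given $\dsF^{(i-1)}$ and the earlier $n_j,k_j$, choose $n_i\gg k_{i-1}$ and process the elements $\gamma\in\Gamma$ in decreasing order of $n_i$-step walk probability. Starting from $\dsF^{(i,0)}:=\dsF^{(i-1)}$, at inner stage $j$ I examine a candidate $\gamma$: if $L_\gamma$ is linearly disjoint from $\dsF^{(i,j-1)}$ over $\dsQ$ (equivalently, $\Gal(\dsF^{(i,j-1)}(\gamma)/\dsF^{(i,j-1)})\simeq S_n$), fix a copy of $G_1\leq\Gal(L_\gamma/\dsQ)\simeq S_n$, set $M_1(\gamma):=L_\gamma^{G_1}$, and put $\dsF^{(i,j)}:=\dsF^{(i,j-1)}\cdot M_1(\gamma)$. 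A degree count using linear disjointness gives $\dsF^{(i,j)}\cap L_\gamma=M_1(\gamma)$, and because $M_1(\gamma)\subseteq L_\gamma$, the earlier identities $\dsF^{(i,j-1)}\cap L_{\gamma'}=M_1(\gamma')$ are preserved provided $L_\gamma$ and $L_{\gamma'}$ are linearly disjoint over $\dsF^{(i,j-1)}$---a condition ensured by Theorem~\ref{thm:main thm use sieve method} applied over the compositum of prior $L_{\gamma'}$'s. Iterating over finitely many $\gamma$ with $n_i$ sufficiently large, the total skipped probability is at most $2^{-(i+2)}$, giving an intermediate $\dsF^{(i-1)*}$ in which $\Gal(\dsF^{(i-1)*}(\gamma)/\dsF^{(i-1)*})\simeq G_1$ holds for every processed $\gamma$. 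Performing the analogous procedure at $k_i\gg n_i$ with $G_2$ and $M_2(\gamma):=L_\gamma^{G_2}$ produces $\dsF^{(i)}\supset\dsF^{(i-1)*}$ carrying the $G_2$-behavior at step $k_i$; demanding that the $k_i$-walks be generic over the compositum of $\dsF^{(i-1)*}$ with the $L_\gamma$'s already used at stage $n_i$ simultaneously preserves the $G_1$-behavior at step $n_i$.

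Setting $\dsF_{\G}:=\bigcup_i\dsF^{(i)}$, the theorem follows provided the intersection identities $\dsF^{(j)}\cap L_\gamma=M_1(\gamma)$ (or $M_2(\gamma)$) propagate through all subsequent stages $j>i$. This is the main obstacle: one needs \emph{uniform} control of linear disjointness of the splitting fields appearing at every later stage from those used at stage $i$, with total failure probability summable in a convergent series. It is overcome by choosing each $n_i,k_i$ to grow rapidly enough that Theorem~\ref{thm:main thm use sieve method}, applied over the current compositum, yields error at most $2^{-(i+j+10)}$ at stage $j\geq i$, so that a union bound over all $j>i$ gives total error $\ll 2^{-i}$. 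Such rapid choices are always possible because each $\dsF^{(i)}$ is a finite extension of $\dsQ$ with its own positive decay rate $\beta_{\dsF^{(i)}}>0$, making the main theorem applicable uniformly over it; the outer induction on $i$ then closes.
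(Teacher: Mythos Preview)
Your plan has a genuine gap in the within-stage bookkeeping. You process the elements $\gamma$ in the support of $X_{n_i}$ one by one, and for the preservation of $\dsF^{(i,j)}\cap L_{\gamma'}=M_1(\gamma')$ you need the current $L_\gamma$ to be linearly disjoint from the compositum of the earlier $L_{\gamma'}$'s over $\dsF^{(i,j-1)}$. You say this is ``ensured by Theorem~\ref{thm:main thm use sieve method} applied over the compositum of prior $L_{\gamma'}$'s'', but that compositum is only determined \emph{after} $n_i$ is fixed, while the constants in Theorem~\ref{thm:main thm use sieve method} depend on the base field, which here grows with each inner step $j$. So you cannot choose $n_i$ in advance to make the skipped probability small over a field that is itself built from the step-$n_i$ outcomes. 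This is not cosmetic: distinct $S_n$-extensions of $\dsQ$ may well share a nontrivial subfield, and nothing in the sieve bound at a single time $n_i$ prevents such collisions among the $L_\gamma$'s. Your cross-stage union-bound paragraph is fine in spirit, but it presupposes that the within-stage step has already produced a finite list of pairwise disjoint fields, which you have not established.

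The paper resolves precisely this point with an idea absent from your plan. It first passes to the maximal pro-$2$ extension $\dsQ_{(2)}$ of $\dsQ$, so that every $\gamma$ with $\Gal(\dsQ(\gamma)/\dsQ)\simeq S_n$ satisfies $\Gal(\dsQ_{(2)}(\gamma)/\dsQ_{(2)})\simeq A_n$. Since $A_n$ is simple for $n\ge 5$, any two \emph{distinct} $A_n$-extensions of $\dsQ_{(2)}$ are automatically linearly disjoint; this delivers, for free, mutual linear disjointness of all the splitting fields arising at every stage, with no within-stage circularity. The paper then builds $\dsF_{\G}$ not as a compositum of fixed fields $L_\gamma^{G_1}$, but as the common fixed field of automorphisms $\sigma_1^{\G},\dots,\sigma_r^{\G}\in\Gal(\overline\dsQ/\dsQ_{(2)})$ chosen (via that linear disjointness) so that their restriction to each relevant $\dsQ_{(2)}(\gamma)$ is a prescribed generator of $G_1$ or $G_2$. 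Note that the hypothesis $G_1,G_2\le Alt(n)$, which your argument never invokes, is exactly what makes this reduction to $A_n$ possible.
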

In particular, this theorem shows that in this case, when the base field is not finitely generated, there is no generic behaviour.
\begin{proof}
Let $R_i=\{\sigma_{1,1},\dots,\sigma_{r_i,i}\},i=1,2$ be a generating set of $G_i$. Without loss of generality we may assume $r_1=r_2$, and therefore denote $|R_i|=r$. We will construct the field $\dsF_{\G}$ as a fixed field of Galois automorphisms $\sigma^{\G}_1,\dots,\sigma^{\G}_r\in \Gal(\overline{\dsQ}/\dsQ)$. Let $Q_{(2)}$ be the maximal pro-2 extension of $\dsQ$. Then for any $\gamma\in\Gamma$ such that $\Gal(\dsQ(\gamma)/\dsQ)=S_n$, we have $\Gal(\dsQ_{(2)}(\gamma)/\dsQ_{(2)})=A_n$. Since $A_n$ is simple, we get that $\dsQ_{(2)}(\gamma_1),\dsQ_{(2)}(\gamma_2)$ are linearly disjoint if and only if they are different.

Let $n_1\in\dsN$ be chosen, by applying Theorem \ref{thm:main} such that
$$
\dsP(\omega_{k_1}\in\{\gamma\in\Gamma:\Gal(\dsQ(\gamma)/\dsQ)=S_n\})>1-\frac1{2}
$$
Let $F_{1,1},\dots,F_{1,N_1}$ be the finite set of fields occurring as splitting fields over $\dsQ$ of all possible $n_1-th$ steps of the random walk with Galois group over isomorphic to $S_n$, and set $F_1$ to be their composite.

Let $k_1\in\dsN$ be chosen by applying Theorem \ref{thm:main} such that
$$
\dsP(\omega_{k_1}\in\{\gamma\in\Gamma:\Gal(F_1(\gamma)/F_1)=S_n\})>1-\frac1{2}
$$
Denote $E_{1,1},\dots,E_{1,M_1}$ the set of fields over $\dsQ$ occurring as splitting fields of all possible $k_1-th$ step of the random walk (notice that the random walk itself did not change, only the base field) with Galois group over $F_1$ isomorphic to $S_n$, and set $E_1$ to be the composite of $F_1$ with these fields. Notice that these fields are linearly disjoint from $F_{1,1},\dots,F_{1,N_1}$ over $\dsQ$, and $\dsQ_{(2)}E_{1,1},\dots,\dsQ_{(2)}E_{1,M_1}$ are linearly disjoint from $\dsQ_{(2)}F_{1,1},\dots,\dsQ_{(2)}F_{1,N_1}$ over $\dsQ_{(2)}$. 

We continue by induction: Let $n_{i+1}$ be such that
$$
\dsP(\omega_{k_1}\in\{\gamma\in\Gamma:\Gal(E_{i}(\gamma)/E_{i})=S_n\})>1-\frac1{2^{i+1}}
$$
Let $F_{i+1,1},\dots,F_{i+1,N_{i+1}}$ be the set of fields which are splitting fields over $\dsQ$ of all $n_{i+1}-th$ step of the random walk with Galois group isomorphic to $S_n$. Let $F_{i+1}$ be the composite of these fields with $E_i$. Continue as before to define $k_{i+1}$, $E_{i+1,1},\dots,E_{i+1,M_{i+1}}$, and $E_{i+1}$. Notice that at any step we add fields $F_{i,j}$ or $E_{i,j}$ that are linearly disjoint fields from the previous fields over $\dsQ$, since their Galois groups over $F_i$ or $E_i$ are isomorphic to $S_n$. Furthermore, when taking the composite of all fields with $\dsQ_{(2)}$, we get fields that are linearly disjoint over $\dsQ_{(2)}$. So all fields $\dsQ_{(2)}F_{i,j}$ and $\dsQ_{(2)}E_{i,j}$ are linearly disjoint over $\dsQ_{(2)}$. Also, by the construction of these fields, we have that
\begin{equation}\label{eq:infnit_count_ex1}
\dsP(\omega_{n_i}\in\{\gamma\in\Gamma:\exists 1\leq j\leq N_i,\dsQ(\gamma)=F_{i,j}\})>1-\frac1{2^i}
\end{equation}
\begin{equation}
\dsP(\omega_{k_i}\in\{\gamma\in\Gamma:\exists 1\leq j\leq M_i,\dsQ(\gamma)=E_{i,j}\})>1-\frac1{2^i}
\label{eq:infnit_count_ex2}
\end{equation}

Define $\sigma^{\G}_1,\dots,\sigma^{\G}_r\in \Gal(\overline{\dsQ}/\dsQ_{(2)})$ in the following way: By construction we know that $\Gal(\dsQ_{(2)}F_{i,j}/\dsQ_{(2)})\simeq \Gal(\dsQ_{(2)}E_{i,j}/\dsQ_{(2)})\simeq Alt(n)$. We may therefore fix for any such field an isomorphism to $Alt(n)$, and consider $\sigma_{d,i}\in \Gal(\dsQ_{(2)}F_{i,j}/\dsQ_{(2)})$ (or $\Gal(\dsQ_{(2)}E_{i,j}/\dsQ_{(2)})$) for $d=1,\dots,r,\;i=1,2$. Since furthermore the fields $\dsQ_{(2)}F_{i,j},\dsQ_{(2)}E_{i,j}$ are linearly disjoint over $\dsQ_{(2)}$ we can find $\sigma^{\G}_d\in \Gal(\overline{\dsQ}/\dsQ_{(2)})$ such that $\sigma^{\G}_d|_{F_{i,j}}=\sigma_{d,1},\forall i\in\dsN,j=1,\dots,N_i$, and $\sigma^{\G}_d|_{E_{i,j}}=\sigma_{d,2},\forall i\in\dsN,j=1,\dots,M_i$. Set $F=\{x\in\overline{\dsQ}:\sigma^{\G}_d(x)=x\; \forall d=1,\dots,r\}$. 

We claim now that for all $i\in\dsN,j=1,\dots,N_i$, $\Gal(FF_{i,j}/F)\simeq G_1$, and similarly $\Gal(FE_{i,j}/F)\simeq G_2$. To see this notice that by the fundamental theorem of Galois theory, $\Gal(\overline{\dsQ}/F)=\langle\sigma^{\G}_1,\dots,\sigma^{\G}_r\rangle$, that is the closure of the group generated by $\sigma^{\G}_1,\dots,\sigma^{\G}_r$ in $\Gal(\overline{\dsQ}/\dsQ_{(2)})$. Furthermore, $\Gal(FK/F)\simeq\langle\sigma^{\G}_1|_{K},\dots,\sigma^{\G}_r|_{K}\rangle$ for any Galois field $K$. Since by definition of $F$, $\sigma^{\G}_d,d=1,\dots,r$ acts trivially on $F$, the action of $\sigma^{\G}_d$ on $FF_{i,j}$ is determined by its action on $\dsQ_{(2)}F_{i,j}$, which was defined to be $\sigma^{\G}_d|_{\dsQ_{(2)}F_{i,j}}=\sigma_{d,1}$. We therefore get that $\Gal(FF_{i,j}/F)\simeq G_1$, and by the analogous computation, $\Gal(FE_{i,j}/F)\simeq G_2$. By \eqref{eq:infnit_count_ex1},\eqref{eq:infnit_count_ex2} the theorem is now proved.
\end{proof}
\appendix
\section{Cartan subgroups}\label{app:properties of Cartans}
In this appendix we list and prove some of the basic properties of Cartan subgroups, following the work of Mohrdieck in \cite{Mohr}. Throughout this section, the ground field is assumed to be algebraically closed. We recall the definition of a Cartan subgroup. Let $G$ be a linear algebraic group with $G^o$ reductive. A {\it{Cartan subgroup}} of $G$ is a subgroup $C<G$ satisfying the following properties
\begin{enumerate}
\item $C$ is diagonalizable.
\item $C/C^o$ is cyclic.
\item The index $[N_G(C):C]$ is finite.
\end{enumerate}
\subsection{Basic properties}
In \cite{Mohr} \S3, the following is proved
\begin{prop}\label{prop:Cartan_basic_props}
Let $G$ be as above.
\begin{enumerate}
\item
Let $h\in G$ be semisimple element. Then $h$ is contained in a Cartan subgroup. In fact, the subgroup generated by $S$ and $h$, where $S$ is a maximal torus in $Z_{G^o}(h)^o$ is a Cartan subgroup.
\item Let $C<G$ be a Cartan subgroup, and let $h\in G$ be such that $hC^o$ generates $C/C^o$. Then $C^o$ is a maximal torus of $Z_{G^o}(h)^o$.
\item\label{prop:Cartan_basic_props3} Let $C<G$ be a Cartan subgroup. Then $C^o$ is a regular torus in $G^o$, that is $Z_{G^o}(C^o)$ is a maximal torus in $G^o$.
\end{enumerate}
\end{prop}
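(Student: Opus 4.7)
The plan is to establish the three parts sequentially, using standard structure theory for connected reductive groups: $Z_{G^o}(s)^o$ is reductive for any semisimple $s \in G$, all maximal tori of a connected reductive group are conjugate, and Weyl groups are finite.

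For part (1), starting with a semisimple $h \in G$, I would consider the connected reductive group $Z_{G^o}(h)^o$, choose a maximal torus $S$ inside it, and set $C := \langle S, h \rangle$. Since $S$ centralizes $h$ and both consist of semisimple elements, $C$ is abelian and generated by commuting semisimple elements, hence diagonalizable. The identity component $C^o$ must equal $S$: it contains $S$ (which is connected), while any strictly larger connected closed subgroup of $C$ would be a torus in $Z_{G^o}(h)^o$ properly containing $S$, contradicting maximality. Consequently $C/C^o = \langle hC^o \rangle$ is cyclic. Finally, since $C^o = S$ is characteristic in $C$, one has $N_G(C) \subseteq N_G(S)$, and the finiteness of $N_G(S)/Z_G(S) \hookrightarrow \mathrm{Aut}(X(S))$ combined with $Z_{Z_{G^o}(h)^o}(S) = S$ (maximal tori are self-centralizing in connected reductive groups) yields $[N_G(C):C] < \infty$.

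For part (2), let $C$ be Cartan with $h \in C$ such that $hC^o$ generates $C/C^o$. Since $C$ is abelian, $C^o \subseteq Z_{G^o}(h)^o$. If there were a torus $T \supsetneq C^o$ inside $Z_{G^o}(h)^o$, then $\langle T, h\rangle$ would be diagonalizable with cyclic component group, and by the argument of part (1) would satisfy all three Cartan axioms while strictly containing $C$. Since Cartan subgroups associated to the same coset of $G^o$ are conjugate by an element of $G^o$ (Proposition~\ref{prop:Cartan_bsc_prpts}(3)), they share a common dimension, and this yields the desired contradiction; hence $C^o$ is a maximal torus of $Z_{G^o}(h)^o$.

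The hardest step is part (3). Denote $R := Z_{G^o}(C^o)$, a connected reductive subgroup of $G^o$ with $C^o$ central; since $h$ centralizes $C^o$, conjugation by $h$ defines an automorphism of $R$. Its fixed-point subgroup $R^h = Z_{G^o}(C)$ has identity component equal to $C^o$: indeed, using that $C^o$ is a maximal torus of $Z_{G^o}(h)^o$ by part (2) and that maximal tori are self-centralizing, one obtains $Z_{Z_{G^o}(h)^o}(C^o) = C^o$, forcing $(Z_{G^o}(C))^o \subseteq C^o$, with the reverse inclusion trivial. Decompose $R = C^o \cdot M$ with $M$ semisimple centralizing $C^o$; it remains to prove that $M$ is trivial. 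The strategy is to show that a nontrivial $M$ would allow one to enlarge $C$ to a strictly larger diagonalizable subgroup still satisfying the Cartan axioms, contradicting the maximality of $C$ established via part (2) and the conjugacy of Cartans. The delicate point, where one must exploit the semisimple-automorphism structure of $h$ acting on $M$ more carefully than via a naive Steinberg fixed-point argument, is producing such an enlargement; this is the structural input from Mohrdieck's analysis of non-connected reductive groups that I would cite to close the argument.
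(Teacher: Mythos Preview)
The paper does not actually prove this proposition: the sentence immediately preceding it reads ``In \cite{Mohr} \S3, the following is proved,'' and no argument is given. So there is no proof in the paper to compare your sketch against; your part (3), which ends by deferring to Mohrdieck, is in effect doing exactly what the paper does for the whole statement.

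That said, two points in your sketch deserve attention. In part (2) you invoke Proposition~\ref{prop:Cartan_bsc_prpts}(3) (conjugacy of Cartans associated to a coset), but in the paper's own logical development that conjugacy is Corollary~\ref{cor:Cartan_sbgps_conjugated}, proved via Proposition~\ref{prop:Cartan subgps pre conj props}, which in turn uses Proposition~\ref{prop:Cartan_basic_props}\eqref{prop:Cartan_basic_props3}. So your argument for (2) is circular within the paper's framework. A non-circular route is available and simpler: if $S\supseteq C^o$ is a maximal torus of $Z_{G^o}(h)^o$, then $S$ centralizes both $C^o$ and $h$, hence $S\subseteq Z_{G^o}(C)\subseteq N_G(C)$; the finite-index axiom for $C$ forces $Z_{G^o}(C)^o=C^o$, and since $S$ is connected this gives $S=C^o$.

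In part (1), the step ``finiteness of $N_G(S)/Z_G(S)$ combined with $Z_{Z_{G^o}(h)^o}(S)=S$ yields $[N_G(C):C]<\infty$'' is not complete. The self-centralizing property of $S$ inside $Z_{G^o}(h)^o$ says nothing about $Z_G(S)\cap N_G(C)$, whose identity component is a priori a torus in $Z_{G^o}(S)$ that could strictly contain $S$. What closes the gap is essentially part (3): once $T:=Z_{G^o}(C^o)$ is known to be a torus on which $h$ acts by a finite-order automorphism $\sigma$ with $(T^\sigma)^o=C^o$, the commutator map $t\mapsto t\,\sigma(t)^{-1}$ has image meeting $C^o$ only finitely, forcing $\bigl(N_G(C)\cap Z_G(C^o)\bigr)^o=C^o$. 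So the natural order is to establish (3) first (or the equivalent torus computation), then deduce the finite-index condition in (1).
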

\begin{remark}\label{rem:Cartans have regulars}
Since $Z_{G^o}(C^o)$ is a maximal torus, $T_C$, of $G^o$, we get that the the intersection of $C^o$ with the set of regular semisimple elements of $G^o$ contains an open dense subset of $C^o$. To see this, let $A(T_C)$ be the finite set of roots of $T_C$. For $\alpha\in A(T_C)$, let $u_\alpha:\mathbb G_a\to G^o$ be a homomorphism satisfying
$$
tu_\alpha(x)t^{-1}=u_\alpha(\alpha(t)x),\quad t\in T_C,x\in\mathbb G_a
$$
This shows that if $t\in\ker(\alpha)$, then there exists a unipotent element that commutes with it. Since $C^o$ is irreducible, we find that if
$$
C^o\subset\cup_{\alpha\in A(T_C)}\ker(\alpha)
$$
then there exists $\alpha\in A(T_C)$ such that $C^o\in\ker(\alpha)$, in which case $Z_{G^o}(C^o)$ will contain a unipotent, and therefore can not be a torus. Since the set $\cap_\alpha\{t\in T_C:\alpha(t)\neq1\}$ is open, contains only semisimple regular elements, and has a non empty intersection with $C^o$, we get the result.
\end{remark}
In the following we prove some properties of Cartan subgroups, generalizing the one proved in \cite{Mohr}. We first prove a structure result on non-connected algebraic groups.
\begin{prop}\label{prop:non connected cyclic structure}
Let $G$ be a linear algebraic group defined over a field $\dsF$, such that $G/G^o$ is cyclic of order $m$. Assume $\chara(\dsF)=0$ or $(\chara(\dsF),m)=1$. Then for any coset $G_i$ of $G^o$ that generates $G/G^o$, there exists $x\in G_i$ such that $G=G^o\rtimes\langle x\rangle$.
\end{prop}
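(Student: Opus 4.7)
The strategy combines the Jordan decomposition, the structure of Cartan subgroups in non-connected groups (Proposition~\ref{prop:Cartan_basic_props}), and the divisibility of tori guaranteed by the hypothesis on $\chara(\dsF)$. The plan is to build a torsion element of order exactly $m$ inside the coset $G_i$.

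First I reduce to semisimple elements. Given $y \in G_i$, let $y = y_s y_u$ be its Jordan decomposition. The unipotent part $y_u$ lies in $G^o$ (it generates a connected unipotent subgroup), so $y_s = y y_u^{-1} \in y G^o = G_i$. Replacing $y$ by $y_s$, I assume $y$ is semisimple.

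Next, I place $y$ in a Cartan subgroup $C = \langle S, y \rangle$, where $S$ is a maximal torus of $(Z_{G^o}(y))^o$, as furnished by Proposition~\ref{prop:Cartan_basic_props}(1). Since $y$ centralizes $S$, the group $C$ is abelian; being generated by commuting semisimple elements it is diagonalizable, with $C^o = S$ a torus. If I can show that $y^m \in S$---equivalently, that $y$ has order exactly $m$ in $C/C^o$---then the conclusion follows swiftly: the torus $S$ is divisible by $m$ under the given characteristic hypothesis, so I may choose $s \in S$ with $s^m = y^{-m}$, and then $x := sy \in G_i$ satisfies $x^m = s^m y^m = e$ by commutativity in $C$. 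Since $xG^o$ generates $G/G^o \cong \bZ/m\bZ$, the order of $x$ is precisely $m$, hence $\langle x \rangle \cap G^o = \{e\}$ and $G = G^o \rtimes \langle x \rangle$.

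The main obstacle is therefore to establish $y^m \in S$. The element $y^m$ is semisimple, lies in $G^o$, and commutes with $y$, so it sits in $Z_{G^o}(y)$; what I need is for it to lie in the identity component, so that I can enlarge $S$ to contain it. Passing to the simply connected isogeny cover $\widetilde{G^o} \to G^o$ (and lifting $G$ to a compatible $\widetilde G$), this reduces to the connectedness of $Z_{\widetilde{G^o}}(\widetilde y)$, which follows from Steinberg's connectedness theorem applied to the semisimple automorphism of $\widetilde{G^o}$ induced by conjugation by $\widetilde y$. This is the natural setting for the proposition---and indeed matches its use in Lemma~\ref{lem:gp hilbert}, where $H^o$ is semisimple---and the splitting constructed upstairs then descends to a splitting of $G$ via the isogeny.
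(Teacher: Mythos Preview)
Your argument diverges substantially from the paper's and carries real costs. The paper's proof is entirely elementary: starting from any $y\in G_i$ it takes the Jordan decomposition of $y^m=y_sy_u$ inside $G^o$, extracts an $m$-th root $t_u$ of $y_u$ in the one-parameter unipotent subgroup $\overline{\langle y_u\rangle}$ and an $m$-th root $t_s$ of $y_s$ in a maximal torus containing $y_s$, and sets $x=yt^{-1}$ with $t=t_st_u$. No Cartan subgroups, no Steinberg, no covers --- and no restriction on $G^o$ beyond the characteristic hypothesis. Your route, by contrast, invokes Proposition~\ref{prop:Cartan_basic_props} (which in this paper is stated only for $G^o$ reductive) and then Steinberg's connectedness theorem (which needs $G^o$ semisimple and simply connected). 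The proposition as stated has neither hypothesis, so even granting that the intended application has $H^o$ semisimple, your proof does not establish the proposition as written.

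More seriously, the isogeny step is incomplete. Lifting conjugation by $y$ to a semisimple automorphism $\tilde\sigma$ of $\widetilde{G^o}$ is fine, and Steinberg gives that $(\widetilde{G^o})^{\tilde\sigma}$ is connected; but you then need $y^m$ to lie in $\pi\bigl((\widetilde{G^o})^{\tilde\sigma}\bigr)$, i.e.\ to admit a $\tilde\sigma$-fixed lift. A lift $\widetilde{y^m}$ only satisfies $\tilde\sigma(\widetilde{y^m})=\widetilde{y^m}\,z$ for some $z$ in the finite centre $\ker\pi$, and adjusting the lift by $c\in\ker\pi$ replaces $z$ by $c^{-1}z\,\tilde\sigma(c)$. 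Killing $z$ is therefore a genuine $H^1$ obstruction, not automatic; your sketch (``lifting $G$ to a compatible $\widetilde G$ \ldots the splitting constructed upstairs then descends'') does not address it. The phrase ``enlarge $S$ to contain it'' is also off: $S$ is already maximal in $(Z_{G^o}(y))^o$; what you mean is to \emph{choose} $S$ through $y^m$ once you know $y^m\in(Z_{G^o}(y))^o$ --- but that is exactly the point in dispute.
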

\begin{proof}
Let $y\in G_i$ be any elements in $G_i$. By definition $y^m\in G^o$, and so let $t\in G^o$ be an elements such that $t^m=y^m$ (to find such $t$ write $y^m=y_uy_s$ according to the Jordan decomposition of $y^m$, and find $t_u$ in the one parameter unipotent subgroup generated by $y_u$ and $t_s$ in a maximal torus containing $y_s$ such that $t_u^m=y_u,t_s^m=y_s$. Notice that this is all possible by the assumption on $\chara(\dsF)$). Then $t$ commutes with $y$, and $yt^{-1}\in G_i$ satisfies $\left(yt^{-1}\right)^m=e$, and the result follows.
\end{proof}
\begin{prop}\label{prop:Cartan subgps pre conj props}
Let $G$ be a linear algebraic group, with $G^o$ reductive. Let $C<G$ be a Cartan subgroup, and $h\in C$ semisimple such that $hC^o$ generates $C/C^o$.
\begin{enumerate}
\item\label{prop:Cartan subgps pre conj props1} There exist a maximal torus $T_h<G^o$ containing $C^o$, and a Borel subgroup $B_h<G^o$ containing $T_h$, both invariant under conjugation by $h$.
    \item\label{prop:Cartan subgps pre conj props2} Let $T$ be a maximal torus containing $C^o$, invariant under conjugation by $h$. Then for any $t\in T$, there exists $t'\in T$ such that $\;t'tht'^{-1}\in C^oh$
    \item\label{prop:Cartan subgps pre conj props3} Let $g\in G^oh$ be a semisimple element. Then there exists $x\in G^o$ such that $x^{-1}gx\in C^oh$.
\end{enumerate}
\end{prop}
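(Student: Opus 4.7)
The essential observation is that $C$, being diagonalizable, is abelian, so $h$ centralizes $C^o$. For (1), I would take $T_h := Z_{G^o}(C^o)$, which by Proposition~\ref{prop:Cartan_basic_props}(3) is a maximal torus of $G^o$; since $h$ centralizes $C^o$ it normalizes $T_h$, making $T_h$ the unique $h$-invariant maximal torus containing $C^o$. Writing $\sigma$ for conjugation by $h$ on $G^o$ (a semisimple automorphism, as $h$ is semisimple), the inclusions $C^o \subseteq T_h^{\sigma,o} \subseteq Z_{G^o}(h)^o$ combined with Proposition~\ref{prop:Cartan_basic_props}(2) force $T_h^{\sigma,o} = C^o$, so that $T_h^{\sigma,o}$ is a maximal torus of the reductive fixed-point group $(G^o)^{\sigma,o}$. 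This is precisely the criterion guaranteeing the existence of a $\sigma$-stable Borel $B_h \supseteq T_h$ via Steinberg's theorem on semisimple automorphisms of reductive groups (cf.\ \cite{St} and the non-connected version exploited in \cite{Mohr}).

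For (2), setting $T = T_h$, a direct calculation using commutativity of $T$ rewrites
\begin{equation*}
t' \cdot th \cdot (t')^{-1} = t \cdot \bigl(t' \sigma(t')^{-1}\bigr) \cdot h,
\end{equation*}
so the claim reduces to surjectivity of the map $\bar\phi \colon T/C^o \to T/C^o$, $\bar{t'} \mapsto \overline{t' \sigma(t')^{-1}}$. Since $h^m \in C^o$ for $m = |C/C^o|$, $\sigma$ has finite order on $T$; dualizing $1 \to C^o \to T \to T/C^o \to 1$ and taking $\sigma$-invariants on character lattices yields the exact sequence
\begin{equation*}
0 \to X(T/C^o)^{\sigma} \to X(T)^{\sigma} \to X(C^o) \to H^1(\langle\sigma\rangle, X(T/C^o)),
\end{equation*}
whose rightmost term is torsion. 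Combined with $\mathrm{rank}\, X(T)^{\sigma} = \dim T^{\sigma,o} = \dim C^o$ from (1), this forces $X(T/C^o)^{\sigma}$ to have rank zero, whence $(T/C^o)^{\sigma,o}$ is trivial, $\bar\phi$ is surjective, and the required $t'$ exists.

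For (3), I would apply the Steinberg-type structure theorem for semisimple elements in a coset of a reductive group under a semisimple automorphism: given the $\sigma_h$-stable pair $(B_h, T_h)$ from (1), every semisimple $g \in G^o h$ is $G^o$-conjugate to some $th$ with $t \in T_h$ (cf.\ \cite{St}, and the non-connected treatment in \cite{Mohr}). Writing $x_1^{-1} g x_1 = th$ and then applying (2) gives $t' \in T_h$ with $t' t h (t')^{-1} \in C^o h$; setting $x := x_1 (t')^{-1}$ yields $x^{-1} g x = t' t h (t')^{-1} \in C^o h$.

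The main obstacle is importing the two Steinberg-type inputs --- existence of a $\sigma_h$-stable Borel through $T_h$ in (1), and conjugation of an arbitrary semisimple element of the twisted coset into $T_h h$ in (3). These are the substantive facts about semisimple automorphisms of connected reductive groups, and the remainder of the argument is bookkeeping anchored by the initial observation that diagonalizability of $C$ forces $h$ to centralize $C^o$.
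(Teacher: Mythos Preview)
Your argument is correct and follows the same architecture as the paper: both take $T_h = Z_{G^o}(C^o)$ and appeal to Steinberg's results (Corollaries~7.4 and~7.5 of \cite{St}) for the $h$-stable Borel in (1) and for conjugating $g$ into $T_h h$ in (3), then finish (3) by applying (2). The one real difference is in (2). You establish finiteness of $(T/C^o)^\sigma$ by a rank computation on character lattices, feeding in $T_h^{\sigma,o}=C^o$ from (1); the paper instead observes directly that any $\sigma$-fixed coset $tC^o$ satisfies $t^{-1}ht\in C^o h$ and $t^{-1}C^o t=C^o$, so $t\in N_{G^o}(C)$, whence the fixed set embeds in $N_{G^o}(C)/C^o$, which is finite by the defining property~(iii) of a Cartan subgroup. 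Both arguments then conclude surjectivity via Lang--Steinberg on the connected group $T/C^o$. The paper's route is shorter because it invokes the Cartan axiom~(iii) directly; yours recovers the same finiteness from Proposition~\ref{prop:Cartan_basic_props}(2), which is a valid but slightly more roundabout path.
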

\begin{proof}
For the first part, we consider $T_h$ to be the maximal torus $Z_{G^o}(C^o)$, containing $C^o$, provided by Proposition \ref{prop:Cartan_basic_props}\eqref{prop:Cartan_basic_props3}. Let $B'$ be a Borel subgroup of $Z_{G^o}(h)$ containing $C^o$. By corollary 7.4 in \cite{St}, it is contained in a Borel subgroup $B$ of $G^o$, invariant under conjugation by $h$. Since $Z_B(C^o)$ is a maximal torus of $G^o$, it must coincide with $T_h$, and therefore we get the required result.
For the second part, notice that since $t$ and $t'$ commute, it is equivalent to showing that $[t',h]\in t^{-1}C^o$. Consider the automorphism $c_h$ of $T/C^o$ via conjugation by $h$. Then $[t',h]=t'c_h(t'^{-1})$. By definition of $C,C^o$ and $h$, we have that this automorphism has only finitely many fixed points (since any fixed point is a $C^o$ -coset inside $N_{G}(C)$, and $[N_{G^o}(C):C^o]<\infty$). Since $T/C^o$ is connected, we can apply the Lang-Steinberg Theorem (Theorem \ref{thm:Lang Steinberg} above), to get that the map $t'\mapsto t'c_h(t'^{-1})$ as a map from $T/C^o$ to itself is onto, and therefore $t^{-1}C^o$ is in the image. For the last part: by Corollary 7.5 in \cite{St} there exist a Borel subgroup $B_g$ of $G^o$ and a maximal torus $T_g<B_g$ of $G^o$ both invariant under conjugation by $g$. After conjugation by an element of $G^o$, we may have that $T_g=T_h$, and $B_g=B_h$. We therefore get that $gh^{-1}\in G^o$, preserving $T_h$ and $B_h$ and therefore $gh^{-1}\in T_h$ (since it preserves $T_h$, it is inside the normalizer of $T_h$ and since it preserves a Borel subgroup $B_h$, it is a trivial element of the (classical) Weyl group $W(G^o,T_h)$). The last part is now a consequence of the second part of the proposition.
\end{proof}
With this proposition, we can now show that Cartan subgroups are conjugated by elements of $G^o$.
\begin{cor}\label{cor:Cartan_sbgps_conjugated}
Let $G$ be as in  Proposition \ref{prop:Cartan_basic_props}. Let $h,g\in G$, be semisimple elements such that $hg^{-1}\in G^o$, and let $C_h,C_g$ be Cartan subgroups containing $h,g$ respectively, such that $C^o_hh,C^o_gg$ generate $C_h/C^o_h,C_g/C^o_g$ respectively. Then there exist an element $x\in G^o$ such that $C_h=(C_g)^x$.
\end{cor}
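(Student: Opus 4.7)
The plan is to reduce the problem to the setting of Proposition \ref{prop:Cartan subgps pre conj props}(3) and then invoke conjugacy of maximal tori in the connected centralizer. I would proceed as follows.

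First, since $hg^{-1}\in G^o$, the element $g$ lies in the coset $G^o h$. Applying Proposition \ref{prop:Cartan subgps pre conj props}(3) to the Cartan subgroup $C_h$ with distinguished element $h$ and to the semisimple element $g\in G^o h$, I would extract an element $x\in G^o$ such that $x^{-1}gx\in C_h^o h\subset C_h$. Conjugating the hypotheses by $x^{-1}$, the group $x^{-1}C_g x$ is also a Cartan subgroup containing $x^{-1}gx$, with $x^{-1}gx\cdot(x^{-1}C_g^o x)$ still generating the quotient by its connected component.

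At this point I have two Cartan subgroups, $C_h$ and $x^{-1}C_g x$, both containing the semisimple element $g':=x^{-1}gx$, and in both $g'$ represents a generator of the cyclic quotient. By Proposition \ref{prop:Cartan_basic_props}(2), $C_h^o$ and $x^{-1}C_g^o x$ are therefore both maximal tori of the connected reductive group $Z_{G^o}(g')^o$. Since any two maximal tori of a connected algebraic group are conjugate, there exists $y\in Z_{G^o}(g')^o$ with $y^{-1}C_h^o y = x^{-1}C_g^o x$. Because $y$ centralizes $g'$, it also sends $g'$ to itself, and since each Cartan subgroup is generated by its connected component together with $g'$, we get
\[
y^{-1}C_h y \;=\; y^{-1}\langle C_h^o,\,g'\rangle y \;=\; \langle x^{-1}C_g^o x,\,g'\rangle \;=\; x^{-1}C_g x.
\]
Setting $z:=xy^{-1}\in G^o$ yields $C_h = z^{-1}C_g z = (C_g)^{z}$, as required.

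I do not anticipate a serious obstacle: Proposition \ref{prop:Cartan subgps pre conj props}(3), which does the heavy lifting via the Lang--Steinberg argument and the use of invariant Borel subgroups, has already been established, so the only remaining ingredient is the classical conjugacy of maximal tori applied inside $Z_{G^o}(g')^o$. The one minor point to be careful about is that the conjugating element $y$ must actually fix $g'$ pointwise (not merely conjugate the connected components of the two Cartans to one another); this is why one chooses $y$ inside the centralizer of $g'$ rather than in a general conjugator between the two maximal tori of $G^o$.
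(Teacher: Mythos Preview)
Your argument is correct and essentially identical to the paper's proof: both apply Proposition \ref{prop:Cartan subgps pre conj props}(\ref{prop:Cartan subgps pre conj props3}) to move $g$ into $C_h^o h$, then use Proposition \ref{prop:Cartan_basic_props}(2) together with conjugacy of maximal tori in the connected centralizer to match the identity components, and finally observe that the conjugating element centralizes the generator so the full Cartan subgroups coincide. The only cosmetic difference is that you pick $y$ in $Z_{G^o}(g')^o$ while the paper picks it in $Z_{G^o}(g)$, which amounts to the same thing after conjugating by $x$.
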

\begin{proof}
By Proposition \ref{prop:Cartan subgps pre conj props} \eqref{prop:Cartan subgps pre conj props3}, we get that there exists $x\in G^o$, such that $g^x\in C^o_hh$. Therefore, since $C_h$ is a Cartan subgroup containing $g^x$, and $C_h/C^o_h$ is generated by $C^o_hg^x$, by Proposition \ref{prop:Cartan_basic_props}, $C^o_h$ is a maximal torus of $Z_{G^o}(g^x)=\left(Z_{G^o}(g)\right)^x$, and therefore there exists $y\in Z_{G^o}(g)$ such that $\left(C^o_g\right)^{yx}=C^o_h$. We therefore have
$$
C_h=\langle C^o_h,h\rangle=\langle C^o_h,g^x\rangle=\langle \left(C^o_g\right)^{yx},g^{yx}\rangle=\langle C^o_g,g\rangle^{yx}=C_g^{yx}
$$
which concludes the proof.
\end{proof}
Another property that we will exploit in this paper is the structure of the Weyl group $W(G_i,C)$ of a given Cartan subgroup. For this we need some notations. Let $G$ be a semisimple linear algebraic group, $G_i$ a fixed coset of $G^o$, and $C$ a Cartan subgroup associated to $G_i$ (see \S\ref{sec:Cartan_subgp}). Let $g\in G_i\cap C$ be a fixed element, and denote by $c_g$ the automorphism of $G^o$ of conjugation with $g$. As mentioned above, $T_C:=Z_{G^o}(C^o)$ is a maximal torus of $G^o$. Denote $W:=W(G^o,T_C)$ the Weyl group of $G^o$ with respect to $T_C$, and denote by $W_C:=\left\{w\in W:w^g=w\right\}$, where $\bullet^g$ denotes the action of $g$ on $W$.
\begin{lem}\label{lem:Weyl gp strctr}
Under the above notations we have the following split exact sequence
\begin{equation}\label{eq:Weyl gp strctr}
1\to \left(T_C/C^o\right)^g\to W(G_i,C)\to W_C\to1
\end{equation}
where $\left(T_C/C^o\right)^g$ is the set of fixed points of the automorphism $c_g:T_C/C^o\to T_C/C^o$.
\end{lem}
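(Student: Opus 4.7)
The plan is to construct the sequence in four steps: define a map $\psi: W(G_i, C) \to W_C$ via the inclusion $N_{G^o}(C) \subseteq N_{G^o}(T_C)$; identify its kernel with $(T_C/C^o)^g$; prove surjectivity via the Lang--Steinberg Theorem; and exhibit a section using the Weyl group of the centralizer of $g$. Throughout, I write $\sigma = c_g$, use that $C$ is diagonalizable (hence abelian) and that $C^o \subseteq T_C$, and reduce to the case where $G^o$ is simply connected by the standard isogeny argument, so as to apply Steinberg's theorem that $(G^o)^g = Z_{G^o}(g)$ is connected with maximal torus $T_C^g = C^o$.

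Since any $n \in N_{G^o}(C)$ normalizes $C^o$, hence $Z_{G^o}(C^o) = T_C$, the assignment $nC^o \mapsto nT_C$ is a well-defined homomorphism $\psi: W(G_i, C) \to W$. Its image lands in $W_C$ because $gng^{-1}n^{-1}$ is in $C$ (as a product of elements of $C$) and in $G^o$ (by a coset bookkeeping), so by abelianness of $C$ it lies in $C \cap G^o \subseteq Z_{G^o}(C^o) = T_C$. For the kernel, $t \in T_C$ normalizes $C$ iff $tgt^{-1} = t\sigma(t)^{-1}\, g \in C$, iff $t\sigma(t)^{-1} \in C \cap T_C = C \cap G^o$. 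The simply-connected reduction, via $T_C^g = C^o$, forces $C \cap G^o = C^o$, so $\ker \psi = \{tC^o : t\sigma(t)^{-1} \in C^o\} = (T_C/C^o)^g$.

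For surjectivity, given $w \in W_C$ lift to $n \in N_{G^o}(T_C)$ with $gng^{-1} = n t_0$, $t_0 \in T_C$. A direct computation reduces the condition $sn \in N_{G^o}(C)$ (for $s \in T_C$) to solving
\[
s\sigma(s)^{-1} \equiv c_n(t_0) \pmod{C^o} \quad\text{in } T_C/C^o.
\]
The long exact sequence for $1 \to C^o \to T_C \to T_C/C^o \to 1$, together with $T_C^g = C^o$ and the finite order of $\sigma$, shows $(T_C/C^o)^\sigma$ is finite; Theorem \ref{thm:Lang Steinberg} then produces such an $s$. For the splitting, Steinberg's theorem in the simply connected setting identifies $W_C = W(G^o, T_C)^g$ with $W(Z_{G^o}(g), C^o) = N_{Z_{G^o}(g)}(C^o)/C^o$, and any representative $n \in N_{Z_{G^o}(g)}(C^o)$ centralizes $g$ and normalizes $C^o$, hence normalizes $C = \langle C^o, g \rangle$, giving a section $W_C \hookrightarrow W(G_i, C)$ of $\psi$.

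The main obstacle is the kernel computation: without control of $C \cap G^o$, the kernel is a priori $\{t \in T_C : t\sigma(t)^{-1} \in C \cap G^o\}/C^o$, which can strictly contain $(T_C/C^o)^g$. This is precisely where Steinberg's connectedness theorem for centralizers of semisimple elements in simply connected groups is essential; once $T_C^g = C^o$ is in hand, the surjectivity by Lang--Steinberg and the splitting by the Weyl-invariants theorem are routine.
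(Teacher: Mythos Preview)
Your overall architecture matches the paper's: both define the map $N_{G^o}(C)/C^o \to W(G^o,T_C)$ via $nC^o\mapsto nT_C$, check the image lands in $W_C$, compute the kernel, obtain surjectivity via Lang--Steinberg on $T_C/C^o$ (the paper packages this as Proposition~\ref{prop:Cartan subgps pre conj props}\eqref{prop:Cartan subgps pre conj props2}), and then split the sequence.

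The substantive difference is your ``reduction to simply connected $G^o$ by the standard isogeny argument,'' and this is where the gap lies. In the non-connected setting this reduction is \emph{not} standard: $g\notin G^o$, so you must lift the automorphism $c_g$ to the cover $\widetilde{G^o}$, build a new non-connected group there, and then argue that each of $W(G_i,C)$, $(T_C/C^o)^g$, and $W_C$ is unchanged under this passage. You do none of this. Moreover, even granting simply connectedness, the assertion $T_C^g=C^o$ is not an immediate consequence of Steinberg's connectedness theorem for $(G^o)^g$; it needs the additional input that $c_g$ stabilizes a Borel containing $T_C$ (so that $c_g|_{T_C}$ is a diagram automorphism), which comes from Proposition~\ref{prop:Cartan subgps pre conj props}\eqref{prop:Cartan subgps pre conj props1} but which you do not invoke.

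For the kernel, the reduction is in any case unnecessary. Your worry that the kernel is a priori $\{t\in T_C:t\sigma(t)^{-1}\in C\cap G^o\}/C^o$ is well taken, but $C\cap G^o=C^o$ holds for \emph{every} Cartan subgroup associated to $G_i$, with no hypothesis on $G^o$: by Proposition~\ref{prop:non connected cyclic structure} one may choose $g\in G_i$ with $g^m=e$ (where $m$ is the order of $G_i$ in $G/G^o$), so that $|C/C^o|=m$ and hence $C\cap G^o=C^o$ for this $C$; conjugacy of Cartan subgroups associated to $G_i$ (Corollary~\ref{cor:Cartan_sbgps_conjugated}) then transports this to all such $C$. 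This is what underlies the paper's bare assertion ``$ngn^{-1}\in gC^o$.''

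For the splitting, the paper does not go through $W(Z_{G^o}(g),C^o)$ at all. It argues directly that any $w\in W_C$, acting on $T_C$, commutes with $c_g$ and hence preserves $C^o=(Z_{T_C}(g))^o$; combined with the surjectivity step (which produces a lift $n$ with $ngn^{-1}\in C^og$), this yields $n\in N_{G^o}(C)$ and hence the section. Your route via Steinberg's identification $W_C\cong W(Z_{G^o}(g),C^o)$ again rests on the unjustified simply connected reduction.
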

\begin{proof}
For every $n\in N_{G^o}(C)$ we have that $n\in N_{G^o}(T_C)$, and therefore we can define the maps
$$
\phi:W(G_i,C)\to W(G^o,T_C)\quad \phi(nC^o)=nT_C
$$
Assume $nC^o\in\ker\phi$, then $nT_C=T_C$, that is $n\in T_C$. Furthermore, $n\in N_{G^o}(C)$, and so $ngn^{-1}\in gC^o$, and therefore $c_g(nC^o)=nC^o$ showing that the sequence is left exact. For right exactness, let $n\in N_{G^o}(T_C)$ such that $gng^{-1}=nt$, for some $t\in T_C$, and hence $ngn^{-1}=nt$. By Proposition \ref{prop:Cartan subgps pre conj props} \ref{prop:Cartan subgps pre conj props2} we may assume that $t\in C^o$. We therefore get that $nT_C$ preserves $C^og$. Furthermore, since $C^o=\left(Z_{T_C}(g)\right)^o$, we find that $g^{-1}n^{-1}C^ong=C^o$ and so $nT_C$ preserves $C$, and thus the map $nC^o\mapsto nT_C$ is surjective onto $W_C$. To get that the map splits, we notice that if the map $nC^o\mapsto nT_C$ satisfies the splitting. To see this, notice that since $C^o=\left(Z_{T_C}(g)\right)^o$, if $w\in W_C$ then $w$ sends $C^o\mapsto C^o$, and $C^og\mapsto C^og$, and therefore $C\mapsto C$. In particular we get that the map $nT_C\mapsto nC^o$ is well defined, injective, and we get that the exact sequence splits.
\end{proof}
\begin{cor}\label{cor:Weyl gp strctr}
Under the above notations, we have that $W(G_i,C)=\left(T/C^o\right)^g\rtimes W_C$. Furthermore, $W_C<\W(G_i,C)=N_{G^o}(C)/Z_{G^o}(C)$.
\end{cor}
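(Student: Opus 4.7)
The first assertion is essentially a restatement of Lemma \ref{lem:Weyl gp strctr}. Since the exact sequence
$$
1 \longrightarrow (T_C/C^o)^g \longrightarrow W(G_i,C) \longrightarrow W_C \longrightarrow 1
$$
was shown to split in the previous lemma, $W(G_i,C)$ is a semidirect product $(T_C/C^o)^g \rtimes W_C$, with $W_C$ identified with its image under the splitting $s:W_C\hookrightarrow W(G_i,C)$ constructed there (namely $nT_C\mapsto nC^o$ for $n\in N_{G^o}(T_C)$ preserving $C$).

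For the second assertion, I will use the natural surjection
$$
q:W(G_i,C)=N_{G^o}(C)/C^o \twoheadrightarrow N_{G^o}(C)/Z_{G^o}(C)=\widetilde W(G_i,C),
$$
whose kernel is $Z_{G^o}(C)/C^o$. My plan is to show that $q\circ s$ is injective, which immediately gives the desired embedding $W_C\hookrightarrow \widetilde W(G_i,C)$. So I would suppose that some $w\in W_C$ satisfies $s(w)\in\ker q$, and aim to deduce $w=1$.

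Concretely, pick $n\in N_{G^o}(T_C)$ with $s(w)=nC^o$; the assumption $s(w)\in Z_{G^o}(C)/C^o$ means $n\in Z_{G^o}(C)$. In particular $n$ centralizes $C^o$, so $n\in Z_{G^o}(C^o)=T_C$ by Proposition \ref{prop:Cartan_basic_props}\eqref{prop:Cartan_basic_props3}. But then $w=nT_C$ is trivial in $W(G^o,T_C)$, and since $s$ is a set-theoretic section this forces $w=1$. Hence $q\circ s$ is injective, and $W_C$ embeds into $\widetilde W(G_i,C)$ as claimed.

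The only real content here is the equality $Z_{G^o}(C^o)=T_C$, which is exactly the regularity of $C^o$ given by Proposition \ref{prop:Cartan_basic_props}\eqref{prop:Cartan_basic_props3}; everything else is diagram chasing on the split sequence from Lemma \ref{lem:Weyl gp strctr}. I do not expect a genuine obstacle: the corollary is a bookkeeping consequence of the lemma together with this one structural fact.
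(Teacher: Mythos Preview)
Your proposal is correct and matches the paper's own proof essentially verbatim. The paper's one-line argument is ``immediate from Lemma~\ref{lem:Weyl gp strctr} and the fact that $Z_{G^o}(C)<T_C$''; you have simply unpacked this, deriving $Z_{G^o}(C)\subset Z_{G^o}(C^o)=T_C$ from Proposition~\ref{prop:Cartan_basic_props}\eqref{prop:Cartan_basic_props3} and then chasing the resulting inclusion $\ker q\subset T_C/C^o$ against the section $s$.
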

This is an immediate corollary of Lemma \ref{lem:Weyl gp strctr}, and the fact that $Z_{G^o}(C)<T_C$.
\subsection{Regular semisimple elements in cosets}\label{appsec:reg ss}
For a connected linear reductive algebraic group $G^o$, an element $g\in G^o$ is called regular semisimple if its centralizer is a maximal torus, and henceforth it is contained in a unique maximal torus. It is known that the set of regular semisimple elements contains an open dense subset. In the following we show a similar result for non connected groups.
\begin{defn}\label{defn:regular semisimple}
Let $G$ be a linear algebraic group, with $G^o$ reductive. We say that a semisimple element $g\in G$ is regular, if $\left(Z_{G^o}(g)\right)^o$ is a torus.
\end{defn}
Notice that this definition generalizes the definition in the connected case, and that if $g\in G_i$, for some coset $G_i$, is regular, then by proposition \ref{prop:Cartan_basic_props} (2), $g$ is contained in a unique Cartan subgroup associated to $G_i$. We show now that most elements are regular semisimple.
\begin{prop}\label{prop:most lmnts rs}
Let $G$ be as above, $G_i$ a coset of $G^o$, and $C<G$ a Cartan subgroup associated to $G_i$ (as in \eqref{defn:Cartan_associ} in Section \ref{sec:Cartan_subgp}).
\begin{enumerate}
\item
The set of regular semisimple elements in $C\cap G_i$ contains an open dense subset.
\item
The set of regular semisimple elements in $G_i$ contains an open dense subset.
\end{enumerate}
\end{prop}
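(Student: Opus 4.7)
The plan is to prove (1) by a direct computation inside $C$ using the root decomposition of $G^o$ with respect to the maximal torus $T_C := Z_{G^o}(C^o)$, and then deduce (2) by a conjugation and dimension argument. Since $C$ is diagonalizable we may write $C = C^o \times F$ for a finite subgroup $F$; because $C/C^o$ is generated by $C^o g$ with $g \in G_i$, the coset $G_i$ contains an element $g_0 \in F$ of finite order with $C^o g_0$ generating $C/C^o$. Every element of $C \cap G_i$ then has the form $cg_0$ with $c \in C^o$ and is semisimple, so part (1) reduces to showing that $(Z_{G^o}(cg_0))^o = C^o$ for $c$ in a Zariski dense open subset of $C^o$.

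By Proposition \ref{prop:Cartan_basic_props}(3), $T_C$ is a maximal torus of $G^o$ and is stable under $\mathrm{Ad}(g_0)$; write the root decomposition $\mathfrak g = \mathfrak t_C \oplus \bigoplus_{\alpha \in A(T_C)} \mathfrak g_\alpha$. Since $T_C$ is abelian, $\mathrm{Ad}(c)$ is trivial on $\mathfrak t_C$, so the $\mathrm{Ad}(cg_0)$-fixed part of $\mathfrak t_C$ equals $\mathfrak t_C^{g_0}$; by Proposition \ref{prop:Cartan_basic_props}(2) applied to $g_0$, this coincides with $\mathfrak c^o$ for every $c$. The element $\mathrm{Ad}(g_0)$ permutes the root spaces via its action on $A(T_C)$, and I will analyze fixed vectors orbit by orbit. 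For a $g_0$-orbit $O = \{\alpha_1,\dots,\alpha_r\}$ with $g_0 \cdot \alpha_i = \alpha_{i+1}$, pick $Y_i \in \mathfrak g_{\alpha_i}$ with $\mathrm{Ad}(g_0) Y_i = \xi_i Y_{i+1}$. A vector $\sum a_i Y_i$ is fixed by $\mathrm{Ad}(cg_0)$ iff the cycle relation
\[
\prod_{i=1}^r \alpha_i(c) \cdot \prod_{i=1}^r \xi_i = 1
\]
holds. Since $c \in C^o$ is $g_0$-fixed, $\alpha_i(c) = \alpha_1(c)$ for every $i$, so this becomes $\alpha_1(c)^r = (\prod \xi_i)^{-1}$, a proper closed condition on $C^o$ provided $\alpha_1\lvert_{C^o}$ is nontrivial. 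This nontriviality is exactly the regularity of the torus $C^o$ in $G^o$ (Proposition \ref{prop:Cartan_basic_props}(3) and Remark \ref{rem:Cartans have regulars}: were $C^o \subseteq \ker\alpha_1$, the root subgroup $U_{\alpha_1}$ would centralize $C^o$, contradicting $Z_{G^o}(C^o) = T_C$ being a torus). Excluding the finitely many proper closed subsets arising from the finitely many $g_0$-orbits on $A(T_C)$ leaves a dense open $V' \subseteq C^o$ on which $\mathfrak z_{G^o}(cg_0) = \mathfrak c^o$, hence $(Z_{G^o}(cg_0))^o = C^o$ and $cg_0$ is regular semisimple, proving (1).

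For (2), set $V := V' g_0 \subseteq C \cap G_i$ and consider the morphism $\psi: G^o \times V \to G_i$, $(h,v) \mapsto hvh^{-1}$. Regularity being conjugation invariant, the image sits in the regular semisimple locus of $G_i$; conversely, by Proposition \ref{prop:Cartan_basic_props}(1) every regular semisimple element of $G_i$ lies in a Cartan subgroup associated to $G_i$, and by Proposition \ref{prop:Cartan_bsc_prpts}(3) such Cartans are all $G^o$-conjugate to $C$, so the image of $\psi$ is exactly the regular semisimple locus. For $v \in V$ the centralizer $Z_{G^o}(v)$ has identity component $C^o$, so fibers of $\psi$ have dimension $\dim C^o$ and the image has dimension $\dim G^o + \dim V - \dim C^o = \dim G_i$, hence is Zariski dense. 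Openness of the regular semisimple locus is standard: it is the complement in $G_i$ of the vanishing locus of the discriminant-type invariant extracted from the characteristic polynomial of $\mathrm{Ad}$ on $\mathfrak g$, or alternatively follows from upper semicontinuity of the dimension of $(Z_{G^o}(\cdot))^o$ combined with the Jordan decomposition.

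The decisive step, and what I expect to be the main obstacle, is the nontriviality of the characters $\alpha_1\lvert_{C^o}$ in the orbit computation; this is precisely where the hypothesis on $C$ being a Cartan subgroup enters essentially, through the regularity of $C^o$ in $G^o$. Once that is established, the rest is bookkeeping with root data and a dimension count.
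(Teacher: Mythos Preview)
Your argument is correct, but your proof of part~(1) proceeds by a direct route that differs from the paper's. You compute the $\mathrm{Ad}(cg_0)$-fixed subspace of $\mathfrak g$ root-orbit by root-orbit, using that $\alpha|_{C^o}$ is nontrivial for every root $\alpha$ (regularity of $C^o$) to see that each orbit imposes a proper closed condition $\alpha_1(c)^r=\mathrm{const}$. The paper instead uses a short ``power trick'': writing $C\simeq C^o\times\langle\mu\rangle$ with $\mu^n=e$, one has $(t\mu)^n=t^n\in G^o$, so on the open set $U_1=\{t\in C^o: t^n\text{ is regular in }G^o\}$ (nonempty by Remark~\ref{rem:Cartans have regulars}) one gets $Z_{G^o}(t\mu)\subseteq Z_{G^o}(t^n)$, whose identity component is a torus; hence $t\mu$ is regular semisimple. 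This avoids any root-space bookkeeping by reducing to the classical connected case. Your approach, while longer, has the mild advantage of producing an explicit description of the bad locus in $C^o$ as a finite union of translates of kernels of characters.

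For part~(2) both arguments are essentially the same conjugation-and-dimension count via the morphism $G^o\times(C^o g_0)\to G_i$. Two small points: your assertion that the image of $\psi$ is \emph{exactly} the regular semisimple locus tacitly assumes $C\cap G_i=C^o g_0$, which need not hold if $C\cap G^o\supsetneq C^o$; this is harmless since containment in the locus together with full dimension is all you need. Also, the proposition only asks that the regular semisimple set \emph{contain} an open dense subset, which the dominance/dimension argument already gives; your closing remarks about openness of the locus itself are not required (and would need a bit more care in the non-connected setting than ``standard'').
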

\begin{proof}
Recall that a diagonalizable linear algebraic group $D$, can be decomposed as $D\simeq D^o\times F$, where $F\simeq D/D^o$. Therefore there exists $\mu\in C\cap G_i$ such that $C\simeq C^o\times\langle\mu\rangle$. Furthermore, any element of $C\cap G_i$ is of the form $t\mu$, for $t\in C^o$. Denote by $n=\ord(\mu)$, and let
$$
U_1:=\left\{t\in C^o:\;t^n \text{ is regular}\right\}
$$
Then, $Z_{G^o}(t)=Z_{G^o}(t^n)$ for all $t\in U_1$. Notice also that by Remark \ref{rem:Cartans have regulars}, $U_1$ contains an open dense subset of $C^o$. Let $t\mu\in U_1\mu$. Then
$$
Z_{G^o}(t\mu)\subset Z_{G^o}(t^n\mu^n)=Z_{G^o}(t)
$$
and therefore if $g\in \left(Z_{G^o}(t\mu)\right)^o$, then $g\in\left(Z_{G^o}(t)\right)^o$, and therefore $g\in\left(Z_{G^o}(\mu)\right)^o$. Since $t\in U_1$ is regular semisimple, its centralizer is a maximal torus, and therefore $Z_{G^o}(t\mu)$ is diagonalizable, and its connected component is a torus, which proves part (1).
To prove (2), we use the following morphism
$$
\Phi:G^o/C^o\times C^o\to G_i\quad \Phi(gC^o,h)=gh\mu g^{-1}
$$
Notice that the dimension of both sides is the same. Moreover, since by the definition of a Cartan subgroup it has a finite index in $N_{G^o}(C)$, this map is finite to one, and so the image of $G^o/C^o\times U$, where $U\mu$ is the set of regular semisimple in $C\cap G_i$, contains an open dense subset of $G_i$.
\end{proof}
\section{$k$-Cartan subgroups and $k$-quasi irreducibility}\label{app:k-q Cartans}
Let $k$ be a number field, and let $G$ be an algebraic group defined over $k$. We say, as in Section \ref{sec:F-Cartans}, that a Cartan subgroup is a $k$-Cartan subgroup if $C$ is defined over $k$, and that $C/C^o$ is generated by $C^og$, where $g\in C(k)$. As in Section \ref{sec:diag_gps}, set $k_C$ to be the splitting field of $C$. In \cite{PR} Prasad and Rapinchuk introduced the following definitions:
\begin{defn}\label{def:k_quasi_irr_torus}
Let $G^o$ be a connected semisimple linear algebraic group defined over a number field $k$, and let $G^o=G^{(1)}\cdots G^{(r)}$ be its decomposition into an almost direct product of connected normal $k$-almost simple groups.
\begin{enumerate}
\item
We say that an element is {\it{without component of finite order}} if for some (equivalently, any) decomposition $x=x_1\cdots x_r$ with $x_i\in G^{(i)}$, all the $x_i$'s have infinite order.
\item
We say that a maximal torus $T<G$ is $k$-{\it{quasi-irreducible}} if whenever $T'<T$ is a $k$-subtorus of $T$, then $T'$ is an almost direct product of a subset of $T^{(i)}:=T\cap G^{(i)},i=1,\dots,r$.
\end{enumerate}
\end{defn}
In \cite{PR} Prasad and Rapinchuk proved that for a semisimple group, if $T$ is a $k$-quasi-irreducible anisotropic maximal torus, then any element $x\in T$ without any component of finite order, generates a dense subgroup of $T$. Furthermore, they show that if the image of $\Gal(k_T/k)$ in the group $Aut(X(T))$, as defined in \S\ref{sec:diag_gps}, contains the image of the Weyl group $W(G,T)$, then $T$ is $k$-quasi irreducible anisotropic. In the following Lemma we generalize this result for non connected groups. Let $G$ be a linear algebraic group defined over $k$, such that $G^o$ is semisimple. Fix a coset $G_i$ and denote by $G^o=G^1\cdots G^l=G^{(1,i)}\cdots G^{(r,i)}$ where the product of $G^j,j=1,\dots,l$ is the minimal decomposition of $G^o$ into $k$-simple normal subgroups and the product of $G^{(j,i)},j=1,\dots,r$ is the decomposition of $G^o$ into an almost direct product of $k$-normal subgroups invariant under conjugation by $G_i$.
\begin{defn}\label{defn:G_i-k irred}
Let $G,G^o,G_i$ and $k$ be as above. Denote $G^o=G^1\cdots G^l=G^{(1,i)}\cdots G^{(r,i)}$ be the minimal decomposition as above. We say that
\begin{enumerate}
\item
A $k$-Cartan subgroup $C$ associated to $G_i$ is $G_i-k$ \textit{quasi irreducible} if $C^o$ has no $k$-subtori other than products of a subsets of $C^o\cap G^{(j,i)},j=1,\dots,r$.
\item
An element $x\in G^o$ is {\it{without $G_i$-component of finite order}} if for some (equivalently, any) decomposition $x=x_1\cdots x_r$ with $x_i\in G^{(s,i)},s=1,\dots,r$, all the $x_i$'s have infinite order.
\end{enumerate}
\end{defn}
Following this definition we have the following generalization of the result in \cite{PR}:
\begin{lem}\label{lem:k-q_ired_Cartans}
Let $G$ be a linear algebraic group defined over a number field $k$, such that $G^o$ is semisimple. Fix a coset $G_i$ of $G^o$, and let $G^o=G^{(1,i)}\cdots G^{(r,i)}$ as above. Let $C$ be a $k$-Cartan subgroup associated to $G_i$ such that the image of $\Gal(k_C/k)$ in $Aut(X(C^o))$ contains the image of the Weyl group $W(G_i,C)$. Then $C^o$ is $G_i-k$ quasi irreducible.
\end{lem}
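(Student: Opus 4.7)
The strategy is to adapt the argument of Prasad--Rapinchuk for the connected semisimple case (Theorem~3.2 of \cite{PR}) to the coset setting. Let $T'\leq C^o$ be a $k$-subtorus; we must show that $T'$ is an almost direct product of a subset of the intersections $C^o\cap G^{(j,i)}$.

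\textbf{Step 1 (Dualising).} The inclusion $T'\hookrightarrow C^o$ induces a surjection of character lattices $X(C^o)\twoheadrightarrow X(T')$ whose kernel $M$ is a $\Gal(\bar k/k)$-stable sublattice of $X(C^o)$. Since this action factors through $\Gal(k_C/k)$ by the definition of the splitting field $k_C$, and since by hypothesis $\phi_C(\Gal(k_C/k))$ contains the image of $W(G_i,C)$ in $\operatorname{Aut}(X(C^o))$, the sublattice $M$ is $W(G_i,C)$-stable.

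\textbf{Step 2 (Orthogonal decomposition along $G^{(j,i)}$).} The minimal decomposition $G^o=G^{(1,i)}\cdots G^{(r,i)}$ into $G_i$-invariant $k$-normal subgroups yields an almost direct product $C^o=\prod_{j=1}^r (C^o\cap G^{(j,i)})$, and hence a direct sum
\[
X(C^o)\otimes\mathbb{Q}\;=\;\bigoplus_{j=1}^{r} X(C^o\cap G^{(j,i)})\otimes\mathbb{Q}.
\]
Because each $G^{(j,i)}$ is normal in $G^o$, conjugation by any element of $N_{G^o}(C)\subseteq G^o$ preserves $G^{(j,i)}$ and therefore $C^o\cap G^{(j,i)}$. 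Consequently the $W(G_i,C)$-action respects this decomposition, and we may write $M\otimes\mathbb{Q}=\bigoplus_j M_j$ with $M_j\subseteq X(C^o\cap G^{(j,i)})\otimes\mathbb{Q}$ a $W(G_i,C)$-stable subspace.

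\textbf{Step 3 (Irreducibility on each summand).} Fix $j$ and write $G^{(j,i)}=G^{l_1}\cdots G^{l_s}$ as an almost direct product of the $k$-simple factors of $G^o$ which are transitively permuted by conjugation by a generator $g\in G_i$ of $C/C^o$; such transitivity is the very definition of $G^{(j,i)}$ being minimal $G_i$-invariant. By Corollary~\ref{cor:Weyl gp strctr}, $W(G_i,C)$ contains the subgroup $W_C=W(G^o,T_C)^g$ of the classical Weyl group fixed by $g$, which in turn contains the ``diagonal'' Weyl group $W(G^{l_1})$ acting simultaneously on each $X(T_C\cap G^{l_t})\otimes\mathbb{Q}$ via the isomorphisms induced by powers of $g$. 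Combining this with the cyclic permutation of the factors by $g$ itself, and invoking the Prasad--Rapinchuk theorem applied to the individual $k$-simple factor $G^{l_1}$ (whose Weyl action on $X(T_C\cap G^{l_1})\otimes\mathbb{Q}$ is irreducible whenever the Galois image contains the Weyl image), one obtains that the representation of $W(G_i,C)$ on $X(C^o\cap G^{(j,i)})\otimes\mathbb{Q}$ is $\mathbb{Q}$-irreducible.

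\textbf{Step 4 (Conclusion).} Each $M_j$ is therefore either zero or the whole corresponding summand, so $M\otimes\mathbb{Q}$ is the direct sum of a subset of the $X(C^o\cap G^{(j,i)})\otimes\mathbb{Q}$'s. Dualising back, $T'$ is, as an almost direct product, the product of the matching $C^o\cap G^{(j,i)}$'s, which is exactly the quasi-irreducibility condition of Definition~\ref{defn:G_i-k irred}.

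The principal obstacle is Step~3: one must verify that the interaction between the classical Weyl group $W_C$ of $G^o$ and the cyclic permutation of simple factors effected by $g$ is rich enough that no intermediate rational subspace is preserved. The Prasad--Rapinchuk theorem handles irreducibility within a single $k$-simple factor, while the minimality of $G^{(j,i)}$ as a $G_i$-invariant normal subgroup provides precisely the transitivity on simple factors needed to glue these irreducibilities together. Once this is secured, the corollary on elements without $G_i$-components of finite order (used for Theorem~\ref{thm:main thm use sieve method}) will follow by the same density argument as in \cite{PR}.
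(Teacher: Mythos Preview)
Your overall strategy---dualise, decompose along the $G^{(j,i)}$, prove irreducibility on each summand---matches the paper's, and Steps 1, 2, 4 are fine. The gap is in Step~3, and it is genuine. You conflate the action on $X(T_C)$ with that on $X(C^o)$: the element $g$ does permute the $k$-simple factors $G^{l_t}$ inside $G^o$, but it acts \emph{trivially} on $C^o=(T_C^g)^o$, so ``the cyclic permutation of the factors by $g$ itself'' contributes nothing to the module structure of $X(C^o)$. More seriously, your reduction to Prasad--Rapinchuk for the single factor $G^{l_1}$ only works when the return map $g^s|_{G^{l_1}}$ is trivial, for then $C^o\cap G^{(j,i)}$ is $k$-isogenous to the maximal torus $T_C\cap G^{l_1}$ and one can invoke \cite{PR}. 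But if $g^s$ induces a nontrivial automorphism---e.g.\ $G^{(j,i)}=G^{l_1}=SL_n$ with $g$ acting by transpose-inverse---then $C^o\cap G^{(j,i)}\cong(T_C\cap G^{l_1})^{g^s}$ is a \emph{proper} subtorus, not maximal in any simple factor, and only the centralizer $W(G^{l_1})^{g^s}$ acts on it. Nothing in your argument explains why this smaller group should act irreducibly on $X(C^o\cap G^{(j,i)})\otimes\dsQ$.

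The paper closes this gap via the folded root system: the averaging projection $p:Y_T\to Y_C$ carries the root system $R$ of $(G^o,T_C)$ to a root system $p(R)$ on $Y_C$ whose Weyl group is precisely $W^\tau$ (a nontrivial fact cited from Carter, Proposition~13.2.2). Irreducibility of $W^\tau$ on each irreducible component of $p(R)$ is then standard, and Galois transitivity on the components of $p(R)$ lying inside a given $V^{(j,i)}$ fuses them into a single irreducible summand. This single device handles uniformly both the case where $g$ permutes simple factors and the case where $g$ acts by a diagram automorphism on a fixed factor; your sketch addresses only the former.
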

\begin{proof}
Let $T_C=Z_{G^o}(C^o)$ be the unique maximal torus of $G^o$ containing $C^o$, and let $R=R(G^o,T_C)$ be the root system of $G^o$ with respect to $T_C$. For a $k$-simple normal subgroup $G^j$, let $G^j=G^j_1\cdots G^j_{m_j}$ be its decomposition into absolutely almost simple subgroups, and let $R^j_s=R(G^j_s,T_C\cap G^j_s)$ be the corresponding root systems. Denote by $Y_T=X(T_C)\otimes_{\dsZ}\dsQ$, and $Y_C=X(C^o)\otimes_{\dsZ}\dsQ$. Let $g\in C\cap G_i(k)$ be an element such that $gC^o$ generates $C/C^o$. Then $C^o=\left(T_C^g\right)^o$, the connected component of the fixed points of $T_C$ under conjugation by $g$. The element $g$ defines an automorphism of $Y_T$ induced by the conjugation automorphism on $T_C$, which we denote also by $g$. Let $V^j_s$ be the subspace of $Y_T$ spanned by $R^j_s$, and $V^{(a,i)}$ the direct sum of $V^j_s$ for $G^j<G^{(a,i)},s=1,\dots,m_j$. We claim that any subspace of $Y_T$ which is invariant under $\Gal(k_C/k)$ and $g$ is a sum of some $V^{(a,i)}$. Since $C^o$ is a maximal torus in $\left(Z_{G^o}(g)\right)^o$, we find that $Y_C$ is the fixed points of $Y_T$ under the $g$-action. Furthermore, notice that $\Gal(\ck/k)$ acts on $Y_C$, and since $C^o$ splits over $k_C$, the action factors through $\Gal(k_C/k)$. Let $\Delta$ be the Dynkin diagram of $R$, and let $\tau\in Aut(\Delta)$ be an automorphism such the action of $g$ on $T$ coincides with $\tau$, and let $v\mapsto p(v)=\frac{1}{\ord(\tau)}\sum_{n=1}^{\ord(\tau)}\tau(v)$ be the projection from $Y_T$ to $Y_C$. Then $p(R)$ is a root system of $Y_C$ with Weyl group $W^\tau=\{w\in W(R):w\tau=\tau w\}$ (cf. \cite{Ca1} Proposition 13.2.2). We can now finish the proof as in \cite{PR}: Since $W^\tau$ acts irreducibly on the irreducible components of $p(R)$, we get the same for $\Gal(k_C/k)$ which we assume contains $W(H_i,C)\supset W^\tau$. Furthermore, since for a fixed $j$, $\Gal(k_C/k)$ acts transitively on $G^{(j,i)}$, we find that any $g$-invariant subspace which is invariant under $\Gal(k_C/k)$ must be the sum of some of the $V^{(a,i)}$. Now if $C'\subset C$ is a $k$-subtorus of $C^o$, then it is in particular $g$-invariant, and therefore $\ker\left(res:X(C^o)\to X(C')\right)\otimes_{\dsZ}\dsQ$ is a subspace of $Y_T$ which is invariant under $\Gal(k_C/k)$ and $g$, and thus is of the form $\oplus_{a\in A}V^{(a,i)},A\subset\{1,\dots,r\}$, and hence $C'$ is an almost direct product of $C^o\cap G^{(a,i)},a\not\in A$.
\end{proof}
The following immediate corollary of this Lemma will allow us to reduce the question of splitting fields of elements into splitting fields of $G_i-k$ quasi irreducible Cartan subgroups.
\begin{cor}\label{cor:k-q_ired_elements}
 Let $G,G_i$ and $K$ be as in Lemma \ref{lem:k-q_ired_Cartans}, and assume $C$ is a Cartan subgroup such that $W(G_i,C)\subset \Gal(k_C/K)$. Then for any regular semisimple $g\in G_i(k)\cap C(k)$ such that $g^{\ord(\tau)}$ has no $G_i$-components of finite order, the group generated by $g$ is Zariski dense in $C$ and, in particular, $\Gal(k(g)/k)=\Gal(k_C/k)$.
 \end{cor}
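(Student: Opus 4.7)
The corollary is a direct analogue, in the non-connected setting, of the Prasad--Rapinchuk density theorem for $k$-quasi-irreducible anisotropic tori, and Lemma~\ref{lem:k-q_ired_Cartans} has already done the structural heavy lifting. My plan is to reduce the statement to an application of $G_i$-$k$-quasi-irreducibility of $C^o$ on an appropriate power of $g$ lying in $C^o$.

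First, I would invoke Lemma~\ref{lem:k-q_ired_Cartans} to conclude that $C^o$ is $G_i$-$k$-quasi-irreducible, so that every $k$-subtorus of $C^o$ is an almost direct product of a subcollection of the factors $C^o\cap G^{(j,i)}$, $j=1,\dots,r$. Next, setting $h:=g^{\ord(\tau)}$, I would check (using the proof of Lemma~\ref{lem:k-q_ired_Cartans}, where $\tau$ is the automorphism of the Dynkin diagram induced by conjugation of $T_C$ by $g$) that $h$ lies in $C^o(k)$: indeed $g^{\ord(\tau)}$ acts trivially on $T_C=Z_{G^o}(C^o)$, so $g^{\ord(\tau)}\in Z_G(T_C)$, and since $g^{\ord(\tau)}$ is semisimple and commutes with all of $T_C$, it lies in $T_C\cap C=C^o$.

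Now let $D$ be the Zariski closure of $\langle h\rangle$ in $C^o$. Then $D$ is diagonalizable and defined over $k$, and $D^o$ is a $k$-subtorus of $C^o$. By $G_i$-$k$-quasi-irreducibility, $D^o=\prod_{j\in A}(C^o\cap G^{(j,i)})$ (almost direct product) for some $A\subset\{1,\dots,r\}$. The crucial step is to rule out $A\subsetneq\{1,\dots,r\}$: if $j_0\notin A$, then the projection of $D$ to $G^{(j_0,i)}$ is finite, so the $G^{(j_0,i)}$-component $h_{j_0}$ of $h$ has finite order, contradicting the hypothesis that $h=g^{\ord(\tau)}$ has no $G_i$-component of finite order. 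Hence $D^o=C^o$, so $\langle h\rangle$ is Zariski dense in $C^o$.

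Finally, since $g\in G_i\cap C$ and $C$ is a Cartan subgroup associated to $G_i$, the image $gC^o$ in $C/C^o$ coincides with the generator of this cyclic group (the map $C/C^o\to G/G^o$ sends $gC^o$ to the generator $G_i$ of $\langle G_i\rangle$, and it is injective because $C^o=C\cap G^o$ by the discussion in \S\ref{sec:Cartan_subgp}). Therefore the Zariski closure of $\langle g\rangle$ contains $C^o$ and projects onto all of $C/C^o$, hence equals $C$. Consequently the splitting field of $g$ coincides with the splitting field $k_C$ of $C$, giving $\Gal(k(g)/k)=\Gal(k_C/k)$ as required. The only delicate point is the verification that $h=g^{\ord(\tau)}$ lands in $C^o$ and that $gC^o$ generates $C/C^o$; everything else is a formal consequence of Lemma~\ref{lem:k-q_ired_Cartans} combined with the definition of being without $G_i$-components of finite order.
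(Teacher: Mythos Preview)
Your approach is correct and is exactly what the paper intends (the paper offers no proof beyond calling this an ``immediate corollary'' of Lemma~\ref{lem:k-q_ired_Cartans}). Two small points deserve tightening.

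First, the claim that $h=g^{\ord(\tau)}$ lies in $C^o$ is not fully justified: knowing that $g^{\ord(\tau)}$ centralises $T_C$ only gives $g^{\ord(\tau)}\in Z_G(T_C)$, and you have not shown it lands in $G^o$. But you do not actually need this. The hypothesis that $h$ has no $G_i$-components of finite order already presupposes $h\in G^o$, and from $h\in C$ you get $D\subset C$, hence $D^o\subset C^o$ automatically; that is all the quasi-irreducibility step requires.

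Second, your justification that $gC^o$ generates $C/C^o$ via injectivity of $C/C^o\to G/G^o$ is shaky: the equality $C\cap G^o=C^o$ is not established in \S\ref{sec:Cartan_subgp} and can fail when $G^o$ is not simply connected (compare Proposition~\ref{prop:Cartan_bsc_prpts}(v)). The clean fix uses regularity directly: since $g\in G_i$ is regular semisimple and $C$ is a Cartan subgroup associated to $G_i$ containing $g$, uniqueness (as recalled just before the definition of $\theta_\sigma$ in \S\ref{sec:F-Cartans}) forces $C=C_g=\langle (Z_{G^o}(g))^o, g\rangle$, so $gC^o$ generates $C/C^o$ by construction. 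With these two adjustments your argument is complete.
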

 \bibliographystyle{plain}
\bibliography{mybib3}
\end{document}